\documentclass[11pt]{amsart}

\newcommand{\arxivorjournal}[2]{#1}
\newcommand{\arxivonly}[1]{#1}
\newcommand{\journalonly}[1]{}

%% Input handling:
\usepackage[utf8]{inputenc}  % to allow unicode in source

%% Geometry adjustment
\arxivonly{
  \setlength{\textwidth}{\paperwidth}
  \addtolength{\textwidth}{-2.5in}
  \setlength{\textheight}{\paperheight}
  \addtolength{\textheight}{-1in}
  \calclayout
}

%% AMS and other general math packages:
\arxivonly{
  \usepackage{amsthm}
}
\usepackage{amsfonts}
\usepackage{amsmath}
\usepackage{amssymb}
\usepackage{mathtools} % for \mathclap, used to center extra-wide diagrams.
\usepackage[mathscr]{eucal}

%% General style packages:
\arxivorjournal{
  \usepackage{xcolor}
  \definecolor{darkgreen}{rgb}{0,0.45,0}
  \definecolor{darkred}{rgb}{0.75,0,0}
  \definecolor{darkblue}{rgb}{0,0,0.6}
  \usepackage[colorlinks,citecolor=darkgreen,linkcolor=darkred,urlcolor=darkblue]{hyperref}
  \usepackage{breakurl}
  \usepackage{enumerate} % for customising enumerated lists
}{
  \usepackage[colorlinks]{hyperref}
}

%% For rules of type theory:
\usepackage{mathpartir}

%% Graphics and diagrams packages:
\usepackage{graphicx}
\usepackage{tikz}
\usepackage{tikz-cd}

%% Misc packages
\usepackage{xifthen} % for \ifthenelse
\usepackage{filecontents}

\arxivorjournal{
  %% Theorem environment declarations (using amsthm):
  \theoremstyle{plain}
  \newtheorem{theorem}{Theorem}[section]
  \newtheorem{proposition}[theorem]{Proposition}
  \newtheorem{lemma}[theorem]{Lemma}
  \newtheorem{fact}[theorem]{Fact}
  \newtheorem{corollary}[theorem]{Corollary}
  \newtheorem{conjecture}[theorem]{Conjecture}
  
  \theoremstyle{definition}
  \newtheorem{definition}[theorem]{Definition}
  \newtheorem{remark}[theorem]{Remark}
  \newtheorem{notation}[theorem]{Notation}
  \newtheorem{example}[theorem]{Example}

  \newcommand{\thmqed}{\qed}
}{
  \newtheorem{theorem}{Theorem}[section]
  \newtheorem{proposition}[theorem]{Proposition}
  \newtheorem{lemma}[theorem]{Lemma}
  
  \newtheorem{corollary}[theorem]{Corollary}
  \newtheorem{conjecture}[theorem]{Conjecture}
  
  \newdefinition{definition}[theorem]{Definition}
  \newdefinition{remark}[theorem]{Remark}
  \newdefinition{notation}[theorem]{Notation}
  \newdefinition{example}[theorem]{Example}

  \newproof{proof}{Proof}
  
  \newcommand{\thmqed}{}
  \newcommand{\qedhere}{}
  % NOTE to production editors re \thmqed and \qedhere:
  % currently both defined as empty to match out-of-the-box elsarticle class style, which doesn’t give QED sign in proofs,
  % but please redefine them here to fit with how the “proof” environment sets the QED sign in Advances style.
  %
  % If using \amsthm:
  % - \qedhere already defined there, doesn’t need to be defined here;
  % - \thmqed can just be defined as \qed (or replaced with that in the text)
}

\makeatletter
\arxivonly{
  \renewcommand{\subsection}{\@startsection{subsection}{2}%
    \z@{.5\linespacing\@plus.7\linespacing}{.1\linespacing\@afterindentfalse}%
    {\normalfont\bfseries}}
}
\makeatother

%%% General style macros (non-mathematical)

% \newcommand{\todo}[1]{}
\newcommand{\defemph}[1]{\textbf{#1}} % for highlighting terms introduced/defined

%%% General layout macros

\newcommand{\mystrut}[1]{\vrule height #1 depth 0pt width 0pt}

%%% TikZ setup and style

%% Library imports
\usetikzlibrary{fit}
\usetikzlibrary{shapes}
\usetikzlibrary{arrows}
\usetikzlibrary{arrows}
\usetikzlibrary{decorations.markings}
\usetikzlibrary{positioning}
\usetikzlibrary{intersections}

%% Set default styles for tikz-cd

\tikzset{
  commutative diagrams/arrow style=tikz,
  commutative diagrams/diagrams={row sep=large},
}

%% For using tikz-cd style in \tikzpicture 

\tikzset{cd-style/.style={commutative diagrams/every diagram}}
\tikzset{cd-arrow-style/.style={commutative diagrams/.cd, every arrow, every label}}

%% Custom arrow tips & aliases

\tikzset{fibtip/.tip={Triangle[open,angle=60:4.5pt]}}
% \tikzset{fibtip/.tip={xytri}} % alternative for xy-style arrows
\tikzset{tfibtip/.tip={Bar[sep]Triangle[open,angle=45:4pt]}} % Based on http://tex.stackexchange.com/a/150213
% \tikzset{fibtip/.tip={Bar[sep] xytri}} % alternative for xy-style arrows

\pgfarrowsdeclarealias{c}{c}{right hook}{left hook}
\pgfarrowsdeclarealias{c'}{c'}{left hook}{right hook}

%% Semantic arrow styles

\tikzset{commutative diagrams/inj/.style={hook}}
\tikzset{commutative diagrams/tcof/.style={tail}}

\tikzset{fib/.code={\pgfsetarrowsend{fibtip}}}
\tikzset{tfib/.code={\pgfsetarrowsend{tfibtip}}}

%% Inline arrows

% TikZ inline arrows, adapted from http://tex.stackexchange.com/a/116324 and http://tex.stackexchange.com/a/188351

\newcommand{\generalto}[3][1.5em]{ \mathrel{\mkern-1mu
  \tikz[baseline={([yshift=-0.55ex]a.south)}]{%
    \node[minimum width={#1},align=center,inner xsep=0.5ex,inner ysep=0.15ex] (a) {$\scriptstyle #3$};
    \draw[#2] (a.south west) -- (a.south east);}
 \mkern-1mu}}

\newcommand{\generalfrom}[3][1.5em]{ \mathrel{\mkern-1mu
  \tikz[baseline={([yshift=-0.55ex]a.south)}]{%
    \node[minimum width={#1},align=center,inner xsep=0.5ex,inner ysep=0.15ex] (a) {$\scriptstyle #3$};
    \draw[#2] (a.south east) -- (a.south west);}
  \mkern-1mu}}

\renewcommand{\to}[1][]{ \generalto{->}{#1} }
\newcommand{\shortto}[2][]{ \generalto[#2]{->}{#1} }
\newcommand{\from}[1][]{ \generalfrom{->}{#1} }
\newcommand{\fibto}[1][]{ \generalto{fib}{#1} }
% TODO: work out how to combine with label
\newcommand{\weqto}{ \generalto{->}{\sim} }

% TODO: improve, consistentise with the rest?
\newcommand{\parpair}[2]{\begin{tikzcd}[ampersand replacement=\&] #1 \ar[shift left]{r} \ar[shift right]{r} \& #2 \end{tikzcd}}

%% Letter-like macros:
\newcommand{\A}{\mathbf{A}}
\renewcommand{\AA}{\vec A}
\newcommand{\B}{\mathbf{B}}

\newcommand{\C}{\mathbf{C}}
\newcommand{\catC}{\mathscr{C}}
\newcommand{\D}{\mathbf{D}}
\newcommand{\E}{\mathbf{E}}
\newcommand{\catE}{\mathscr{E}}
\newcommand{\ff}{\vec f}

\newcommand{\I}{\mathcal{I}}
\newcommand{\J}{\mathcal{J}}

\newcommand{\N}{\mathbb{N}}
\newcommand{\T}{\mathbf{T}}

\newcommand{\Y}{\mathbf{Y}}

\newcommand{\GGamma}{\vec \Gamma}
\newcommand{\DDelta}{\vec \Delta}

% Classes of maps in WFS’s
\newcommand{\Anod}{\mathcal{A}}
\newcommand{\Fib}{\mathcal{F}}
\newcommand{\TFib}{\mathcal{TF}}
\newcommand{\Cof}{\mathcal{C}}

\newcommand{\WEq}{\mathcal{W}}

%% Standard simplicial and semi-simplicial sets

%% Category names
\newcommand{\Cat}{\mathrm{Cat}}
\newcommand{\sSet}{\mathrm{sSet}}

\newcommand{\CxlCat}{\mathrm{CxlCat}}
\newcommand{\Lex}{\mathrm{Lex}}
\newcommand{\LCCC}{\mathrm{LCCC}}
\newcommand{\CwA}{\mathrm{CwA}}
 % TODO: remove this, to make sure we never use it.

\newcommand{\Cl}{\operatorname{Cl}}
\newcommand{\weCat}{\mathrm{weCat}}
\newcommand{\ElTop}{\mathrm{ElTop}}
\newcommand{\Set}{\mathrm{Set}}
\newcommand{\FibCat}{\mathrm{FibCat}}

\newcommand{\SpanCat}{\mathrm{Span}}
\newcommand{\EqvCat}{\mathrm{Eqv}}
\newcommand{\EqvReflCat}{\mathrm{EqvRefl}}
\newcommand{\EqvCompCat}{\mathrm{EqvComp}}
\newcommand{\EqvInvCat}{\mathrm{EqvInv}}

\newcommand{\supfunctor}[2]{\ifthenelse{\equal{#2}{}}%
  {(-)^\mathrm{#1}}%
  {{#2}^\mathrm{#1}}%
}
\newcommand{\Span}[1][]{\supfunctor{Span}{#1}}
\newcommand{\Eqv}[1][]{\supfunctor{Eqv}{#1}}
\newcommand{\EqvRefl}[1][]{\supfunctor{EqvRefl}{#1}}
\newcommand{\EqvComp}[1][]{\supfunctor{EqvComp}{#1}}
\newcommand{\EqvInv}[1][]{\supfunctor{EqvInv}{#1}}

\newcommand{\Nf}{\mathrm{N}_{\mathrm{f}}}

%% Word-like operators
% note: anything that may be applied without brackets, eg “ob C”, should be defined with \DeclareMathOperator to get good spacing
%% Constructions on categories
\DeclareMathOperator{\core}{core}
\DeclareMathOperator{\Hom}{Hom}

\newcommand{\op}{\mathrm{op}}
\DeclareMathOperator{\ob}{Ob}

\DeclareMathOperator{\ft}{ft}
\DeclareMathOperator{\Ho}{Ho}
\newcommand{\id}{\mathrm{id}}
\DeclareMathOperator{\ev}{ev}

%% Binary relations

\newcommand{\Imp}{\Rightarrow}
\newcommand{\Pmi}{\Leftarrow}
\newcommand{\iso}{\cong}
\renewcommand{\equiv}{\simeq}
\newcommand{\rhomot}{\sim_r}
\newcommand{\ideq}{=_{\Id}}
\newcommand{\wrhomot}{\sim}
\newcommand{\homot}{\sim}
\newcommand{\orthog}{\pitchfork}

%% Syntax of type theory:
% judgements
\newcommand{\of}[1][1]{\mspace{#1 mu plus 1.0mu}\mathord{:}\mspace{#1 mu plus 1mu}}
  % usage: for variable declarations, gives a tighter colon than the default
  % example: [ x \of \R, n \of \N \types x^n : \R ]
  % usually [ x \of A ] is good; for wider (e.g. when types are complex), use \of[2], \of[3], etc.
\newcommand{\types}{\vdash}
\newcommand{\type}{\ \textsf{type}}
\newcommand{\cxt}{\ \textsf{cxt}}

% logical primitives (and abbreviations)
\newcommand{\Id}{\mathsf{Id}}
\newcommand{\refl}{\mathsf{r}}
\newcommand{\unit}{\mathsf{1}}
\newcommand{\synSigma}{\mathsf{\Sigma}}
\newcommand{\synPi}{\mathsf{\Pi}}
\newcommand{\Piext}{\mathsf{\Pi}_{\mathsf{ext}}}
\newcommand{\HoTT}{\mathsf{HoTT}}

% defined notions

% For contextual categories and CwA’s:
\newcommand{\Ty}{\mathrm{Ty}}
\newcommand{\Tm}{\mathrm{Tm}}

%% Miscellaneous operations
\newcommand{\lorth}[1]{{}^{\orthog}{#1}}
\newcommand{\rorth}[1]{{#1}^{\orthog}}

\newcommand{\fibslice}[2]{{#1}/\!/{#2}} % fibrant slices of CwAs/Cxl Cats
\newcommand{\fibslicefunc}[2]{{#1}/\!/{#2}} % fibrant slices of functors between them.
  % Probably the same as \fibslice, unless we use just C(Γ) as the notation for that.

\newcommand{\aangled}[1]{\langle\!\langle\, #1 \,\rangle\!\rangle} % TODO: find better name?

\newcommand{\freestrux}[3]{\aangled{#1}_{#2}^{#3}}

\newcommand{\freeCxlCat}[2][]{\freestrux{#2}{#1}{}}
\newcommand{\freeCwA}[2][]{\freestrux{#2}{#1}{\CwA}}

\title{The homotopy theory of type theories}

\arxivorjournal{
  \author[K.~Kapulkin]{Krzysztof Kapulkin}
  \address{Dept.\ of Mathematics\\The University of Western Ontario\\London, Ontario}
  
  \author[P.~LeF.~Lumsdaine]{Peter LeFanu Lumsdaine}
  \address{Dept.\ of Mathematics\\Stockholm University\\Stockholm, Sweden}
}{
  \author[uwo]{Krzysztof Kapulkin}
  \address[uwo]{Dept.\ of Mathematics, The University of Western Ontario, London, Ontario}

  \author[sthlm]{Peter LeFanu Lumsdaine}
  \address[sthlm]{Dept.\ of Mathematics, Stockholm University, Stockholm, Sweden}
}

\date{August 7, 2018}

\begin{document}

\begin{abstract}
  We construct a left semi-model structure on the category of intensional type theories (precisely, on $\CxlCat_{\Id,\unit,\synSigma(,\Piext)}$).
  This presents an $\infty$-category of such type theories; we show moreover that there is an $\infty$-functor $\Cl_\infty$ from there to the $\infty$-category of suitably structured quasi-categories.

  This allows a precise formulation of the conjectures that intensional type theory gives internal languages for higher categories, and provides a framework and toolbox for further progress on these conjectures.
\end{abstract}

\begin{filecontents}{arxiv-metadata.txt}
TITLE:

The homotopy theory of type theories

AUTHORS:

Chris Kapulkin and Peter LeFanu Lumsdaine

ABSTRACT:

  We construct a left semi-model structure on the category of intensional type theories (precisely, on $\mathrm{CxlCat_{Id,1,\Sigma(,\Pi_{ext})}}$).
This presents an $\infty$-category of such type theories; we show moreover that there is an $\infty$-functor $\mathrm{Cl}_\infty$ from there to the $\infty$-category of suitably structured quasi-categories.

  This allows a precise formulation of the conjectures that intensional type theory gives internal languages for higher categories, and provides a framework and toolbox for further progress on these conjectures.

COMMENTS:

v2: revised for release of companion paper arXiv:1808.01816; some theorem numbering changes

CLASSES:

18G55 Homotopical algebra (primary) 03B15 Higher-order logic & type theory 18C50 Cat'l semantics of formal languages 55U35 Abstract & axiomatic homotopy theory
[note: condensed to meeti 160 character limit]
\end{filecontents}

\maketitle

%\setcounter{tocdepth}{2}
%\tableofcontents

\section{Introduction}

Homotopy Type Theory (HoTT) arises from the discovery that the logical system of dependent type theory can be naturally interpreted in homotopy-theoretic settings, and provides a rich language for such settings, thanks largely to its richer treatment of equality compared to first-order logic.

Classically, equality carries no information beyond its truth value; but in type theory, objects can be equal/identifiable in a variety of ways, so a type of equalities may be a non-trivial type in its own right, analogous to the path space of a topological space.
This forms the basis of “synthetic homotopy theory”, a major direction within HoTT: the development of homotopy-theoretic constructions and theorems, entirely elementarily within type theory.
%
%\todo{Citations?}.

Of course, one wants to know that these can be interpreted in a good range of established settings that a homotopy theorist might care about.
Conversely, one may hope that all homotopy-theoretic concepts (in some sense) can be translated into type theory.
Various results in these directions have been given --- some proven, some conjectured, some only informally sketched (see e.g.\ \cite{joyal:remarks-on-homotopical-logic,shulman:internal-languages,shulman:inverse-diagrams,shulman:elegant-presheaves,shulman:ei-diagrams,kapulkin:locally-cartesian-qcat}).

Such hopes have been summarised as the idea that HoTT should be the \defemph{internal language of $\infty$-categories}.
Precisely, by analogy with established “internal languages” in settings such as topos theory, this should mean a single master statement subsuming the above results: the existence of a suitable equivalence between some (higher) categories of type theories and $\infty$-categories.
%
% More precisely, one may hope for various such equivalences: for instance, type theory with Martin-Löf’s identity types, dependent sums, and extensional function types should correspond to locally cartesian closed quasi-categories, while “full HoTT” might correspond to “elementary $\infty$-toposes”.

The first contribution of the present paper is a framework for formulating such a claim precisely.
We do so by assembling type theories into a higher category, and giving a functor $\Cl_\infty$ from this to a higher category of suitably structured quasicategories.
The internal language conjecture can then be stated as: $\Cl_\infty$ is an equivalence of higher categories.

The other main contribution is a left semi-model structure on the category of type theories.
This gives a tractable and explicit presentation of the higher category thereof, which we hope will provide a solid base for further progress on the conjectures.

In a little more detail: we work with “type theories” as contextual categories or categories with attributes (CwA’s), keeping our results independent of the correspondence between these and syntactically presented theories.
We assume $\Id$\nobreakdash-, $\Sigma$\nobreakdash-, and unit types throughout; we consider also the extension to $\Pi$-types.

Two technical tools of the paper may be of independent interest.
One (small, but useful and to our knowledge new) is a notion of equivalence between arbitrary objects of a CwA.
The other is the construction of the CwA of span-equivalences in a given CwA, a powerful tool for constructing equivalences between CwA’s.

In Section \ref{sec:spans}, we make use of some results from our forthcoming article \cite{kapulkin-lumsdaine:inverse-diagrams}, currently in preparation.
However, those results may be treated as black boxes; the present paper can be read as essentially self-contained.

During the preparation of this paper, we learned that Valery Isaev has independently given a similar construction in \cite{isaev:model-structure}, defining a (full) model structure on a slightly different category of type theories (assuming an interval type, instead of Martin-Löf identity types).

%%% Local Variables: 
%%% mode: latex
%%% TeX-master: "htott-advances.tex"
%%% End: 

\section{Background}

In this section, we review the necessary background on categorical models of type theory. We recall the definition of a contextual category and introduce the notation for working with them. We then investigate their homotopy-theoretic properties, employing the language of fibration categories.

\subsection{Contextual categories and functors}

We choose to work with contextual categories as our model of type theory. These were introduced by Cartmell in his thesis \cite{cartmell:thesis} and studied by Streicher \cite{streicher:semantics-book} and more recently in a series of papers by Voevodsky (see e.g., \cite{voevodsky:C-system-from-universe,voevodsky:quotients-C-system,voevodsky:products,voevodsky:identity-types}).

\begin{definition} \label{def:cxl-cat}
  A \defemph{contextual category} $\C$ consists of the following data:
  \begin{enumerate}
  \item a category $\C$;
  \item a grading of objects as $\ob \C = \coprod_{n : \N} \ob_n \C$;
  \item an object $1 \in \ob_0 \C$;
  \item \defemph{father} operations $\ft_n : \ob_{n+1} \C \to \ob_n \C$ (whose subscripts we suppress);
  \item for each $\Gamma \in \ob_{n+1} \C$, a map $p_\Gamma : \Gamma \to \ft \Gamma$ (the \defemph{canonical projection} from $\Gamma$, distinguished in diagrams as $\fibto$);
  \item for each $\Gamma \in \ob_{n+1} \C$ and $f : \Delta \to \ft \Gamma$, an object $f^* \Gamma$ together with a \defemph{connecting map} $f.\Gamma : f^*\Gamma \to \Gamma$;
    \newcounter{tempcounter}
    \setcounter{tempcounter}{\theenumi}
  \end{enumerate}  
  such that:
  \begin{enumerate}
    \setcounter{enumi}{\thetempcounter}
  \item $1$ is the unique object in $\ob_0 \C$;
  \item $1$ is a terminal object in $\C$;
  \item for each $\Gamma \in \ob_{n+1} \C$, and $f : \Delta \to \ft \Gamma$, we have $\ft(f^*\Gamma) = \Delta$, and the square
    \[\begin{tikzcd}
      f^* \Gamma \ar[d, fib, "p_{f^*\Gamma}"'] \ar[r,"{f.\Gamma}"] \arrow[dr, phantom, "\lrcorner", very near start] & \Gamma \ar[d, fib, "p_\Gamma"] \\
      \Delta                   \ar[r,"f"]      & \ft \Gamma
    \end{tikzcd}\]
    is a pullback (the \defemph{canonical pullback} of $\Gamma$ along $f$); and
  \item these canonical pullbacks are strictly functorial: that is, for $\Gamma \in \ob_{n+1} \C$, $\id_{\ft \Gamma}^* \Gamma = \Gamma$ and $\id_{\ft \Gamma}.\Gamma = \id_\Gamma$; and for $\Gamma \in \ob_{n+1} \C$, $f : \Delta \to \ft \Gamma$ and $g : \Theta \to \Delta$, we have $(fg)^* \Gamma = g^* f^* \Gamma$ and $fg.\Gamma = f.\Gamma \circ g.f^*\Gamma$.
  \end{enumerate}
\end{definition}

Contextual categories can be easily seen as models of an essentially algebraic theory with sorts indexed by $\N + \N \times \N$.
As such, they come with a canonical notion of morphism: a \defemph{contextual functor} $F : \C \to \D$ between contextual categories is a homomorphism between them, regarded as models of an essentially algebraic theory.
Explicitly, $F$ is a functor preserving on the nose all the structure of Definition \ref{def:cxl-cat}: the grading on objects, the terminal object, the father maps, the dependent projections, the canonical pullbacks, and the connecting maps.

We denote the category of contextual categories and contextual functors by $\CxlCat$.

\begin{notation} \label{notation:cxl-cats}
  Given $\Gamma \in \ob_n \C$, we write $\Ty_\C(\Gamma)$ for the set of objects $\Gamma' \in \ob_{n+1} \C$ such that $\ft(\Gamma') = \Gamma$, and call these \defemph{types in context $\Gamma$}.
  For $A \in \Ty_\C (\Gamma)$, we write $\Gamma.A$ for $A$ considered as an object of $\C$, $p_A$ for the projection $p_{\Gamma.A} : \Gamma.A \to \Gamma$, and $f.A : f^*(\Gamma.A) \to \Gamma.A$ for the connecting map $f.(\Gamma.A)$. 
  For each $f : \Gamma' \to \Gamma$, we have a map $f^* : \Ty_\C (\Gamma) \to \Ty_\C (\Gamma')$ given by the pullback operation of $\C$.
  The axioms of a contextual category ensure that this forms a presheaf $\Ty_\C : \C^\op \to \Set$.

  More generally, by a \defemph{context extension} of $\Gamma \in \ob_n \C$, we mean some object $\Gamma' \in \ob_{n+m} \C$ with $\ft^m \Gamma' = \Gamma$.
  Again, we will write such an extension (considered as an object of $\C$) as $\Gamma.\Delta$, with a canonical projection $p_\Delta : \Gamma.\Delta \to \Gamma$ obtained by composing the projections $\Gamma.\Delta \to \ft(\Gamma.\Delta) \to \ldots \to \ft^m(\Gamma.\Delta) = \Gamma$.
  Similarly, given $f : \Gamma' \to \Gamma$ and a context extension $\Delta$ of $\Gamma$, by iterating the pullback of types we obtain a pullback context extension $f^*\Delta$ over $\Gamma'$, with $f.\Delta : \Gamma'.f^*\Delta \to \Gamma.\Delta$.

  Given $\Gamma \in \C$ and $A \in \Ty_\C (\Gamma)$, we write $\Tm_{\C,\Gamma}(A)$ for the set of sections $s : \Gamma \to \Gamma.A$ of the projection $p_A$.
  When no confusion is possible, we will omit the subscripts, writing $\Ty(\Gamma)$ and $\Tm(A)$ respectively.
\end{notation}

\begin{definition}
  Contextual categories can be equipped with additional operations corresponding to the various type-constructors of Martin-L\"of Type Theory.
  For the present paper we consider just the structure corresponding to:
  \begin{enumerate}
  \item identity types (denoted $\Id$);
  \item unit types ($\unit$) and dependent sum types ($\synSigma$);
  \item dependent function types, with functional extensionality rules (together, $\Piext$).
  \end{enumerate}
  For the definitions of these structures, see \cite[App.\ B]{kapulkin-lumsdaine:simplicial-model}.

  For each choice of constructors, contextual categories with such structure are again models of an essentially algebraic theory (extending the e.a.t.\ of contextual categories), so have a natural notion of morphism: contextual functors preserving the extra structure.
 
  We write $\CxlCat_{\Id,\unit,\synSigma}$ for the category of contextual categories equipped with $\Id$\nobreakdash-, $\unit$\nobreakdash-, and $\synSigma$-types; and $\CxlCat_{\Id,\unit,\synSigma,\Piext}$ the category of contextual categories with all these plus extensional dependent function types.
  When a statement, construction, or proof can be read in parallel for each of these categories, we will refer to them as $\CxlCat_{\Id,\unit,\synSigma(,\Piext)}$.
  There is moreover (again, by their description as e.a.t.’s) a free--forgetful adjunction 
  \[ \begin{tikzcd} \CxlCat_{\Id,\unit,\synSigma}
      \ar[rr,bend left=5] \ar[rr,phantom,"\scriptstyle \bot"]
    & & \ar[ll,bend left=5] \CxlCat_{\Id,\unit,\synSigma\mathrlap{,\Piext}}  \end{tikzcd} \]
\end{definition}

\begin{remark}
  One often considers other logical structure besides $\Id$, $\unit$, $\synSigma$, and $\Piext$.
  Some of the results of this paper extend directly to such further structure; others do not.
  In the absence of a good general framework for such structure, however, we restrict ourselves for the present paper to the case of $\Id, \unit, \synSigma (, \Piext)$, except for a few definitions and constructions that only assume $\Id$-types.
\end{remark}

\begin{definition}
  Following Garner \cite[Prop.\ 3.3.1]{garner:2-d-models}, we note that $\Id$-types on a contextual category allow the construction of more general \defemph{identity contexts}.
  Specifically, given $\Gamma \in \ob_n \C$ and a context extension $\Gamma.\Delta \in \ob_{n+m} \C$, there is a further context extension $\Gamma.\Delta.p_{\Delta}^*\Delta.\Id_\Delta \in \ob_{n+3m}\C$, along with a reflexivity map and elimination operation generalizing those of the identity type $\Gamma.A.p_A^*A.\Id_A$ of a single type over $\Gamma$.
\end{definition}

\begin{definition}\label{def:homotopy-in-cxl-cat}
  Given $f, g : \Gamma \to \Delta$ in a contextual category $\C$, a \defemph{homotopy} $H$ from $f$ to $g$ (denoted $H : f \homot g$) is a factorization of $(f,g) : \Gamma \to \Delta \times \Delta = \Delta . p_{\Delta}^*\Delta$ through the identity context $\Delta.p_{\Delta}^*\Delta.\Id_\Delta \to[p_{\Id_\Delta}] \Delta . p_{\Delta}^*\Delta$.
\end{definition}

There are various established definitions of equivalence in contextual categories, all essentially equivalent \cite[Ch.\ 4]{hott:book}; we choose the following:
\begin{definition}\label{def:equiv-in-cxl-cat}
  Let $\C$ be a contextual category with identity types.
  
  A \defemph{structured equivalence} $w : \Gamma \equiv \Delta$ consists of a map $f : \Gamma \to \Delta$, together with maps $g_1, g_2 : \Delta \to \Gamma$ and homotopies $\eta : fg_1 \homot 1_\Delta$ and $\varepsilon : g_2f \homot 1_\Gamma$.

  An \defemph{equivalence} $\Gamma \weqto \Delta$ in $\C$ is a map $f : \Gamma \to \Delta$ for which there exist some $g_1, g_2, \eta, \varepsilon$ making it a structured equivalence.
\end{definition}

\begin{definition}\label{def:fibrant-slice-of-cxl-cat}
  Given a contextual category $\C$ and an object $\Gamma \in \C$, the \defemph{fibrant slice} contextual category $\fibslice{\C}{\Gamma}$ is given by:
 \begin{enumerate}
  \item objects in $\ob_m \fibslice{\C}{\Gamma}$ are context extensions $\Gamma.\Delta \in \ob_{n+m} \C$;
  \item $(\fibslice{\C}{\Gamma})(\Gamma.\Delta, \ \Gamma.\Delta') := (\C/\Gamma)(\Gamma.\Delta, \ \Gamma.\Delta')$;
  \item the remaining structure is inherited from $\C$.
 \end{enumerate}

  If $\C$ carries identity types (resp.\ $\unit$, $\synSigma$, $\Piext$), then so does $\fibslice{\C}{\Gamma}$.
\end{definition}

This satisfies the familiar categorical property that a slice of a slice is again a slice, in that $\fibslice{(\fibslice{\C}{\Gamma})}{\Gamma.\Delta} \iso \fibslice{\C}{(\Gamma.\Delta)}$.

Moreover, any contextual functor $F : \C \to \D$ and object $\Gamma \in \C$ induce an evident contextual functor $\fibslicefunc{F}{\Gamma} : \fibslice{\C}{\Gamma} \to \fibslice{\D}{F\Gamma}$; and this preserves any logical structure that $F$ does.

\subsection{Fibration categories} 

Fibration categories and variations thereof (like Shulman's \defemph{type-theoretic fibration categories} \cite[Def.\ 2.1]{shulman:inverse-diagrams} and Joyal's \defemph{tribes} \cite{joyal:tribes}) have proven useful when studying homotopy-theoretic aspects of type theory.
We begin by recalling some of the basic definitions and constructions.

Fibration categories were introduced by Brown \cite{brown:abstract-homotopy-theory} as \defemph{categories of fibrant objects}.
We slightly strengthen them, following other recent authors (e.g.\ Szumi\l{}o \cite[Def.\ 1.1]{szumilo:two-models}).

\begin{definition} \label{def:fibration-category}
 A \defemph{fibration category} consists of a category $\catC$ together with two wide subcategories (subcategories containing all objects): $\Fib$ of \defemph{fibrations} and $\WEq$ of \defemph{weak equivalences} such that:
 \begin{enumerate}
  \item weak equivalences satisfy the \defemph{2-out-of-6} property; that is, given a composable triple of morphisms
  \[ X \to[f] Y \to[g] Z \to[h] Z, \]
  if $hg$ and $gf$ are weak equivalences, then so are $f$, $g$, $h$, and $hgf$.
  \item all isomorphisms are \defemph{acyclic fibrations} (i.e., are both fibrations and weak equivalences).
  \item pullbacks along fibrations exist; fibrations and acyclic fibrations are stable under pullback.
  \item $\catC$ has a terminal object $1$; the canonical map $X \to 1$ is a fibration for any object $X \in \catC$ (that is, all objects are \defemph{fibrant}).
  \item every map can be factored as a weak equivalence followed by a fibration.
 \end{enumerate}
\end{definition}

Given a fibration category $\catC$, its \defemph{homotopy category} $\Ho\catC$ is the result of formally inverting the weak equivalences. 
It can be described more explicitly using the notion of weak right homotopy.

\begin{definition}[{\cite[\textsection 2]{brown:abstract-homotopy-theory}}] \label{def:homotopy-in-fib-cat}
  A \defemph{path object} for $X \in \catC$ is any factorization $X \weqto PX \fibto X \times X$ of the diagonal map as a weak equivalence followed by a fibration.
  
  Maps $f, g : X \to Y$ are \defemph{weakly right homotopic}, $ f \wrhomot g$, if for some trivial fibration $t : X' \to X$ the maps $ft,gt : X' \to Y$ factor jointly through some path object $PY \fibto Y \times Y$. 
  \[\begin{tikzcd}
    X' \ar[d, tfib,"t"] \ar[r,"h"] & PY \ar[d, fib] \\
    X \ar[r, "{\langle f,g \rangle}"] & Y \times Y.
  \end{tikzcd}\]
  Say $f, g$ are \defemph{(strictly) right homotopic}, $f \rhomot g$, if one can take $X' = X$, $t = \id_X$.%
\footnote{In some recent literature, e.g.\ Szumi\l{}o \cite{szumilo:two-models}, \emph{right homotopic} is used for the weak notion; we distinguish that explicitly to avoid clashing with more established usage.}
\end{definition}

In general fibration categories, the weak notion is more important:
 
\begin{theorem}[{\cite[Thm.~1]{brown:abstract-homotopy-theory}}] \label{thm:ho-fib-cat}
  For any fibration category $\catC$, the homotopy category $\Ho\catC$ may be taken as the category with the same objects as $\catC$, and with $\Hom_{\Ho\catC}(X, Y) = \Hom_{\catC}(X, Y)/\homot$.
\end{theorem}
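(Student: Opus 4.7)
The plan is to follow Brown's classical strategy: show that $\wrhomot$ is a congruence on $\catC$, define the quotient category, verify that weak equivalences descend to isomorphisms, and check the universal property of the localization.

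First I would establish that $\wrhomot$ is an equivalence relation on each $\Hom_\catC(X,Y)$. Reflexivity follows because the weak equivalence $Y \weqto PY$ underlying any path object exhibits $f \wrhomot f$ via $\id_X$. Symmetry uses the swap automorphism $\sigma$ of $Y \times Y$: given a witness $h : X' \to PY$ for $f \wrhomot g$, factor $\sigma \circ h$ through a fresh path object. Transitivity is subtler: given witnesses $h_1 : X_1' \to P_1Y$ of $f \wrhomot g$ and $h_2 : X_2' \to P_2Y$ of $g \wrhomot k$, pull back the trivial fibrations $X_i' \to X$ to a common cover $X_1' \times_X X_2'$, form the pullback $P_1Y \times_Y P_2Y$ over the two endpoint maps computing $g$, and factor the resulting endpoint-forgetting map as a weak equivalence followed by a fibration to obtain a combined path object.

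Next I would show that $\wrhomot$ is preserved by both pre- and post-composition, making the quotient $\catC/\wrhomot$ into a category with quotient functor $\gamma : \catC \to \catC/\wrhomot$. For precomposition with $j : W \to X$, pull the trivial fibration $t : X' \to X$ back along $j$. For postcomposition with $k : Y \to Z$, given a path object $PY$ for $Y$, factor the composite $PY \to Y \times Y \xrightarrow{k \times k} Z \times Z$ as a weak equivalence followed by a fibration to obtain a path object $PZ$ for $Z$ (verifying the endpoint structure via 2-out-of-3) together with a map $PY \to PZ$; postcomposing the old witness with this map yields a witness for $kf \wrhomot kg$.

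The central step is showing that $\gamma$ inverts weak equivalences. By the factorization axiom and 2-out-of-3, it suffices to invert acyclic fibrations $p : X \fibto Y$. The idea is to construct a section up to weak homotopy: form the pullback $X \times_Y X$ with its two acyclic-fibration projections to $X$, factor the diagonal $X \to X \times_Y X$ through a path object of $X$, and use this to produce a map $s$ (possibly defined only on a trivial-fibration cover $Y' \fibto Y$) with $ps \wrhomot \id_Y$ and $sp \wrhomot \id_X$, which in $\catC/\wrhomot$ yields an honest inverse of $[p]$. I expect this step to be the main obstacle: without any cofibrancy hypothesis there is no strict section, so one must juggle pullbacks, path objects, and trivial-fibration covers carefully. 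Finally, the universal property follows because any functor $F : \catC \to \D$ sending weak equivalences to isomorphisms sends both the trivial fibration $t$ and each path-object projection $PY \weqto Y$ to isomorphisms, forcing $F(f) = F(g)$ whenever $f \wrhomot g$; hence $F$ factors uniquely through $\catC/\wrhomot$, identifying it with $\Ho\catC$.
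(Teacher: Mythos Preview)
The paper does not supply a proof; the theorem is quoted as background with a bare citation to Brown, so there is nothing in-paper to compare against. Your outline is a reasonable sketch of the classical argument.

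However, there is a genuine gap at exactly the step you flag as ``the main obstacle''. To invert an acyclic fibration $p : X \to Y$ in the quotient category $\catC/\wrhomot$ you need an honest morphism $s : Y \to X$ in $\catC$ with $ps \wrhomot \id_Y$ and $sp \wrhomot \id_X$. You allow $s$ to be ``possibly defined only on a trivial-fibration cover $Y' \to Y$'', but then $s : Y' \to X$ is not a morphism $Y \to X$ in $\catC/\wrhomot$ and cannot serve as an inverse for $[p]$. This is not repairable by juggling more carefully: in a general fibration category acyclic fibrations need not split, and the localization functor $\catC \to \Ho\catC$ need not be full. For instance, in chain complexes over $\mathbb{Z}$ (with quasi-isomorphisms as weak equivalences and degreewise surjections as fibrations) one has $\Hom_\catC(\mathbb{Z}/2,\, \mathbb{Z}/2[1]) = 0$ while $\Hom_{\Ho\catC}(\mathbb{Z}/2,\, \mathbb{Z}/2[1]) \cong \mathrm{Ext}^1(\mathbb{Z}/2,\mathbb{Z}/2) \neq 0$. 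Brown's actual description of $\Hom_{\Ho\catC}(X,Y)$ is as a colimit of homotopy classes over acyclic fibrations $X' \to X$, which collapses to the plain quotient $\Hom_\catC(X,Y)/\wrhomot$ only under extra hypotheses such as cofibrancy of $X$; that is precisely the situation in which the paper applies the result (Lemma~\ref{lem:homot-cat-of-cxl-cat}). So the gap in your argument reflects an oversimplification in the stated theorem rather than a fixable oversight in your strategy.
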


\begin{definition}
  A functor between fibration categories is \defemph{exact} if it preserves fibrations, acyclic fibrations, pullbacks along fibrations, and a terminal object.

  An exact functor is a \defemph{weak equivalence} of fibration categories if it induces an equivalence of homotopy categories.
\end{definition}

As mentioned above, the framework of fibration categories can be used to study homotopy-theoretic aspects of type theory.
Let $\C$ be a contextual category with $\Id$-types.
Define classes $\WEq$, $\Fib$ of maps in $\C$ by:
\begin{itemize}
 \item $\WEq$ is precisely the equivalences of Definition \ref{def:equiv-in-cxl-cat};
 \item $\Fib$ consists of maps isomorphic to some composite of canonical projections.
\end{itemize}

\begin{theorem}[{\cite[Thm.\ 3.2.5]{avigad-kapulkin-lumsdaine}, \cite[Ex.\ 2.6.(3)]{kapulkin:locally-cartesian-qcat}}] \label{thm:akl} \leavevmode
\begin{enumerate}
 \item For any contextual category $\C$ with $\Id$, $\unit$, and $\synSigma$, these classes $\Fib$ and $\WEq$ make $\C$ a fibration category.
 \item This forms the object part of a faithful functor $\CxlCat_{\Id, \unit, \synSigma} \to \FibCat$.
 \end{enumerate}
\end{theorem}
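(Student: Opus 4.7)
The plan is to verify the five axioms of Definition \ref{def:fibration-category} for the pair $(\Fib, \WEq)$ on $\C$, and then check that a structure-preserving contextual functor induces an exact functor between the resulting fibration categories, and that this assignment is faithful. Several axioms are essentially formal: the object $1 \in \ob_0 \C$ is terminal, and every object is a composite of canonical projections down to $1$, so all objects are fibrant; $\Fib$ is closed under composition by definition; all isomorphisms are weak equivalences (an isomorphism is its own quasi-inverse, with reflexivity homotopies); and pullbacks along composites of canonical projections exist and preserve fibrations by the very definition of a contextual category.

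The substantive content lies in factorizations, stability of acyclic fibrations under pullback, and the 2-out-of-6 property. For the factorization of $f : \Gamma \to \Delta$, I would use the mapping path-space construction via the identity context on $\Delta$: set $P_f := \Gamma.f^*(p_\Delta^*\Delta.\Id_\Delta)$, so that the projection $P_f \to \Gamma$ is a fibration and the composite $P_f \to \Delta$ picks out the free endpoint of the path; $f$ then factors through $P_f$ via a reflexivity section, which one shows to be an equivalence by $\Id$-elimination. The roles of $\unit$ and $\synSigma$ appear here in assembling iterated context extensions into single types and in forming generalized identity contexts following Garner.

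For 2-out-of-6 I would first verify closure of equivalences under composition together with the 2-out-of-3 property, both of which follow from directly composing quasi-inverses and gluing homotopies via $\Id$-elimination; Brown's standard manipulation of quasi-inverses then derives 2-out-of-6 from these. For stability of acyclic fibrations under pullback, given $p : \Delta' \fibto \Delta$ an equivalence and $f : \Gamma \to \Delta$, one transports the quasi-inverse and homotopy data for $p$ along $f$ using that $p$ is a fibration, producing a quasi-inverse for $f^*p$. The main obstacle, in my view, is showing that the reflexivity section in the factorization above is genuinely an equivalence: this requires a careful setup of generalized identity contexts and an $\Id$-elimination with the correct family of dependency; once that is in place, the remaining arguments are essentially formal manipulations of equivalence data.

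For part (2), any contextual functor $F$ preserving $\Id, \unit, \synSigma$ automatically preserves the terminal object (via the grading), canonical projections and canonical pullbacks (by the definition of contextual functor), and equivalences (since preservation of $\Id$-types gives preservation of homotopies and hence of equivalence data). Hence $F$ induces an exact functor between the associated fibration categories, and functoriality of the assignment is immediate. Faithfulness is essentially automatic: the extra structure of a contextual functor beyond its underlying ordinary functor consists entirely of strict preservation \emph{properties} rather than data, so the underlying ordinary functor already determines the contextual functor.
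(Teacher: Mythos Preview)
The paper does not supply its own proof of this result; it is quoted with citation to \cite[Thm.~3.2.5]{avigad-kapulkin-lumsdaine} and \cite[Ex.~2.6.(3)]{kapulkin:locally-cartesian-qcat} and used thereafter as established background.  Your outline matches the argument of those references quite closely: the mapping-path-space factorization via identity contexts, the direct verification of 2-out-of-6 from the bi-invertibility form of equivalence, and the pullback-stability of acyclic fibrations are all carried out in \cite{avigad-kapulkin-lumsdaine} along the lines you sketch, and part~(2) is indeed the routine observation you describe.

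One small inaccuracy worth flagging: $\unit$- and $\synSigma$-types are not actually needed for the fibration-category axioms.  Identity \emph{contexts} (built from $\Id$ alone, via Garner's construction recalled just before the theorem) already furnish path objects and hence the factorization; pullbacks along fibrations exist by iterating the canonical pullbacks of the contextual structure.  So your remark locating the role of $\unit,\synSigma$ in the factorization step is off; the present paper includes them in the hypothesis only because $\CxlCat_{\Id,\unit,\synSigma}$ is the ambient category it works in throughout.  Also, be careful with the phrasing ``Brown's standard manipulation \ldots\ derives 2-out-of-6 from [2-out-of-3]'': 2-out-of-6 does not follow from 2-out-of-3 in general, but it does follow directly here because equivalences are \emph{defined} as maps admitting two-sided quasi-inverses, and for such maps one argues 2-out-of-6 directly (if $gf$ and $hg$ have quasi-inverses then $g$ acquires both a left and a right quasi-inverse, etc.).
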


The fibration categories arising from contextual categories are particularly nice in that all of their objects are also cofibrant; that is, every acyclic fibration admits a section \cite[Lem.\ 3.2.14]{avigad-kapulkin-lumsdaine}.
This justifies the following description of their homotopy categories:

\begin{lemma} \label{lem:homot-cat-of-cxl-cat}
  Let $\C \in \CxlCat_{\Id,\unit,\synSigma}$.
  Then the homotopy category of $\C$ (regarded as a fibration category) can be described as follows:
  \begin{enumerate}
  \item objects of $\Ho\C$ are the objects of $\C$;
  \item morphisms $\Gamma \to \Delta$ in $\Ho\C$ are homotopy classes of maps in $\C$, in the sense of Definition \ref{def:homotopy-in-cxl-cat}.
  \end{enumerate}
\end{lemma}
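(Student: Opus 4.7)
By Theorem~\ref{thm:ho-fib-cat}, $\Ho\C$ has the same objects as $\C$, and morphisms $\Hom_\C(\Gamma,\Delta)/{\wrhomot}$, where $\wrhomot$ is the weak right homotopy of Definition~\ref{def:homotopy-in-fib-cat}. The lemma therefore reduces to showing that $\wrhomot$ coincides with the identity-context homotopy $\homot$ of Definition~\ref{def:homotopy-in-cxl-cat} on each $\Hom$-set.

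First I reduce $\wrhomot$ to strict right homotopy $\rhomot$. Given $f \wrhomot g : \Gamma \to \Delta$ witnessed by an acyclic fibration $t : X' \fibto \Gamma$ and a factorization $h : X' \to P\Delta$ of $(ft, gt)$ through some path object $P\Delta$, cofibrancy of $\Gamma$ (\cite[Lem.~3.2.14]{avigad-kapulkin-lumsdaine}) supplies a section $s : \Gamma \to X'$ of $t$, and then $hs$ witnesses $f \rhomot g$ through the same path object. Moreover, the identity context is itself a path object for $\Delta$: the reflexivity map $\refl_\Delta : \Delta \to \Delta.p_{\Delta}^*\Delta.\Id_\Delta$ is a weak equivalence (a standard consequence of the $\Id$-type axioms, as used in Theorem~\ref{thm:akl}), and $p_{\Id_\Delta}$ is a composite of canonical projections, hence a fibration. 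Identity-context homotopies are therefore precisely the strict right homotopies through this particular path object, giving $\homot \subseteq \rhomot$ tautologically.

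For the converse $\rhomot \subseteq \homot$, a strict right homotopy through an arbitrary path object $P\Delta$ must be converted into one through $\Delta.p_{\Delta}^*\Delta.\Id_\Delta$. This is the standard fact that on a cofibrant source the right-homotopy relation is independent of the choice of path object. Concretely, one forms a common refinement of $P\Delta$ and $\Delta.p_{\Delta}^*\Delta.\Id_\Delta$ over $\Delta \times \Delta$ (a suitable fibered pullback), identifies the projection down to $P\Delta$ as an acyclic fibration via a 2-out-of-3 argument as in \cite{brown:abstract-homotopy-theory}, lifts the given factorization through it using cofibrancy of $\Gamma$, and then projects to the identity-context factor to obtain a homotopy in the sense of Definition~\ref{def:homotopy-in-cxl-cat}.

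I expect this last step to be the main obstacle: making the path-object comparison precise in the fibration-category setting (without a full model structure) needs some care, though the argument is essentially Brown's. A cleaner fallback is simply to invoke Brown's result on independence of path object directly, once applied to the canonical identity-context path object.
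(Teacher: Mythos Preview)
Your proof is correct and takes essentially the same approach as the paper: identity contexts are path objects, and cofibrancy of all objects collapses weak right homotopy, strict right homotopy, and identity-context homotopy into a single relation. The paper compresses your common-refinement step into the assertion (citing \cite[Thm.~3.2.5]{avigad-kapulkin-lumsdaine}) that path objects in $\C$ are given by identity contexts, then invokes cofibrancy to identify strict and weak right homotopy; your hesitation about the path-object-independence argument is unwarranted, as it is exactly the standard Brown argument you name.
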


\begin{proof}
  As shown in \cite[Thm.\ 3.2.5]{avigad-kapulkin-lumsdaine}, the path-objects in a contextual category $\C$ are given exactly by the identity contexts.
  Homotopy in the sense of Definition \ref{def:homotopy-in-cxl-cat} is therefore exactly right homotopy in the sense of Definition \ref{def:homotopy-in-fib-cat}.
  Furthermore, since every object is cofibrant, this coincides with weak right homotopy.
\end{proof}

Finally, we note an indispensible (and easily overlooked) lemma: the property of being an equivalence does not depend on where one views a map.

\begin{lemma} \label{lem:equiv-in-slice}
  Let $\C$ be a contextual category with identity types.
  A map $f : \Gamma.\Delta \to \Gamma.\Delta'$ over $\Gamma$ is an equivalence in $\fibslice{\C}{\Gamma}$ if and only if it is an equivalence in $\C$.
\end{lemma}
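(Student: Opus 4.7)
I prove each direction directly, using the iterated structure of identity contexts.

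For the forward direction ($\fibslice{\C}{\Gamma}$-equivalence $\Rightarrow$ $\C$-equivalence): a structured equivalence in the slice provides $g_1, g_2 : \Gamma.\Delta' \to \Gamma.\Delta$ over $\Gamma$ together with slice homotopies; I retain $g_1, g_2$ as maps in $\C$ and need only promote each slice homotopy to a $\C$-homotopy. Such a slice homotopy between parallel maps $u, v$ over $\Gamma$ factorises $(u,v)$ through the identity context for $\Delta'$ built over $\Gamma$, which iterates $\Id$-types over only the $m'$ types of $\Delta'$. A $\C$-homotopy instead factorises $(u,v)$ through the identity context for $\Gamma.\Delta'$ built over $1$, which iterates $\Id$-types over all $n+m'$ types. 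I construct the $\C$-homotopy by extending the slice data with reflexivity terms at each of the $n$ levels in $\Gamma$; this is legal precisely because $u$ and $v$ agree strictly on $\Gamma$, so the reflexivity of the appropriate $\Gamma$-components is a valid section of each identity type at those levels.

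For the reverse direction ($\C$-equivalence $\Rightarrow$ $\fibslice{\C}{\Gamma}$-equivalence): given $f$ an equivalence in $\C$, with inverses $g_1, g_2 : \Gamma.\Delta' \to \Gamma.\Delta$ (not necessarily over $\Gamma$) and $\C$-homotopies $\eta : fg_1 \homot 1$, $\varepsilon : g_2 f \homot 1$, I first modify $g_1, g_2$ to lie over $\Gamma$. Applying $p_{\Delta'}$ to $\eta$ and using $p_{\Delta'} f = p_\Delta$ gives a $\C$-homotopy $h_1 : p_\Delta g_1 \homot p_{\Delta'}$; transporting $g_1$ along $h_1$ (by $\Id$-elimination, realising the homotopy-lifting property of the fibration $p_\Delta : \Gamma.\Delta \to \Gamma$) produces $g_1' : \Gamma.\Delta' \to \Gamma.\Delta$ with $p_\Delta g_1' = p_{\Delta'}$ and $g_1 \homot g_1'$ in $\C$; analogously for $g_2'$. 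Then I produce slice homotopies $fg_1' \homot 1$ and $g_2' f \homot 1$: since $f$ is over $\Gamma$ it commutes with transport, so $fg_1'$ is obtained by transporting $fg_1$ along $h_1$. The homotopy $\eta$ splits via the iterated identity context into a $\Gamma$-component (which is $h_1$) and a $\Delta'$-component, the latter being a dependent path over $h_1$; converting this dependent path into a non-dependent path after transporting along $h_1$ (via the standard $\Id$-elimination argument) yields a path from $fg_1'$ to $1$ over reflexivity in $\Gamma$, which is precisely a slice homotopy. The same argument handles $\varepsilon$.

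The main obstacle is the final step of the reverse direction, where a $\C$-homotopy is repackaged as a slice homotopy: this requires formalising both the decomposition of an iterated identity context into its level-by-level components and the equivalence between dependent paths over a given base path and non-dependent paths after transport. Both tools are standard in type theory, but stating them cleanly in the purely algebraic language of contextual categories needs some care.
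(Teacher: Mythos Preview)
Your forward direction is fine: padding a slice homotopy with reflexivities at the $\Gamma$-levels works, and indeed the slice identity context for $\Delta'$ is already a path object for $\Gamma.\Delta'$ in $\C$, so a slice right homotopy is automatically an ambient one.  The paper, by contrast, argues neither direction explicitly; it observes that the statement is an instance of Dold's theorem for fibration categories with cofibrant objects, and cites \cite[Thm.~6.3]{kamps-porter:abstract-and-simple}.

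The reverse direction has a genuine gap at ``analogously for $g_2'$''.  The two inverses are \emph{not} in symmetric positions.  From $\eta : fg_1 \homot 1_{\Gamma.\Delta'}$ you can project along $p_{\Delta'}$ and use $p_{\Delta'}f = p_\Delta$ to obtain $h_1 : p_\Delta g_1 \homot p_{\Delta'}$; then your transport-and-split argument legitimately produces $g_1'$ over $\Gamma$ together with a slice homotopy $fg_1' \homot 1$.  But projecting $\varepsilon : g_2 f \homot 1_{\Gamma.\Delta}$ along $p_\Delta$ only yields $p_\Delta g_2 f \homot p_\Delta$, with an $f$ on the right that you cannot cancel without already knowing $f$ is a slice equivalence.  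Even if you manufacture some $h_2 : p_\Delta g_2 \homot p_{\Delta'}$ by another route (say via $g_2 \homot g_1$) and set $g_2' := (h_2)_! g_2$, the split of $\varepsilon$ gives a slice path from $(\bar\varepsilon)_!(g_2 f)$ to $1$, where $\bar\varepsilon(x)$ depends on $x \in \Gamma.\Delta$ rather than on $f(x)$; there is no reason this map should factor as $g_2'' \circ f$ for any $g_2''$ over $\Gamma$, nor that $h_2 \circ f$ and $\bar\varepsilon$ be homotopic as paths.

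The missing idea --- which is exactly the content of Dold's argument --- is a bootstrap.  Having obtained $g_1'$ over $\Gamma$ with $fg_1' \homot 1$ in the slice, note that $g_1'$ is itself an ambient equivalence (being ambient-homotopic to $g_1$), and run your construction again with $g_1'$ in the role of $f$ and $f$ in the role of $g_1$.  This produces some $f'$ over $\Gamma$ with $g_1' f' \homot 1$ in the slice.  Then $f' \homot (fg_1')f' \homot f$ in the slice, whence $g_1' f \homot g_1' f' \homot 1$ in the slice, and $g_1'$ is the desired two-sided slice inverse.  Without this iteration, your sketch does not close.
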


\begin{proof}
  More generally, let $f : Y \to Y'$ be a map of fibrations over a base $X$, in any fibration category $\catC$, with $Y$ and $Y'$ cofibrant.
  Then $f$ is a homotopy equivalence in $\catC$ if and only if it is one in $\catC/X$.
  This follows by an argument originally due to Dold; it is given for model categories in \cite[Thm.~6.3]{kamps-porter:abstract-and-simple}, but adapts directly to the present setting.
\end{proof}

%%% Local Variables: 
%%% mode: latex
%%% TeX-master: "htott-advances.tex"
%%% End: 

\section{Equivalences, fibrations, and cofibrations of contextual categories} \label{sec:classes-of-maps}

In this section, we will define the three classes of maps in $\CxlCat_{\Id,\unit,\synSigma(,\Piext)}$: weak equivalences, cofibrations, and fibrations required for the left semi-model structure, as well as state the internal language conjectures.

We begin by introducing two notions of equivalence between contextual categories: type-theoretic and homotopy-theoretic, and proving that they are equivalent (Proposition \ref{prop:characterization-of-equivalences}).
We then review the basic facts about known connections between type theory and higher category theory, and state the internal language conjectures (\ref{conj:internal-languages}).
In the remainder of the section, we introduce notions of (trivial) fibrations and cofibrations between contextual categories, proving some their properties.

\subsection{Logical and homotopy-theoretic equivalences}

\begin{definition} \label{def:equiv-of-cxl-cats}
  A map $F : \C \to \D$ of contextual categories with $\Id$-types is a \defemph{(type-theoretic) equivalence} if it satisfies
  \begin{enumerate}
  \item \defemph{weak type lifting}: for any $\Gamma \in \C$ and $A \in \Ty(F\Gamma)$, there exists $\bar{A} \in \Ty(\Gamma)$ together with an equivalence $F\bar{A} \weqto A$ over $F\Gamma$; and
  \item \defemph{weak term lifting}: for any $\Gamma \in \C$, $A \in \Ty(\Gamma)$, and $a \in \Tm(FA)$, there exists $\bar{a} \in \Tm(A)$ together with an element of the identity type $e \in \Tm(\Id_{FA}(F\bar{a},a))$.
  \end{enumerate}
  Write $\WEq$ for the class of type-theoretic equivalences in $\CxlCat_{\Id,\unit,\synSigma(,\Piext)}$. 
\end{definition}

From a logical perspective, this is a sort of conservativity between theories: compare e.g.\ the condition \textsf{TY-CONS} of \cite[\textsection 3.2.3]{hofmann:thesis}.

Both lifting properties can in fact be strengthened:

\begin{lemma}\label{lem:cxt-sec-lifting}
  Every type-theoretic equivalence $F : \C \weqto \D$ additionally satisfies
  \begin{enumerate}
  \item \defemph{weak context lifting}: for any context $\Gamma \in \C$ and context extension $F\Gamma.\Delta$, there exists a context extension $\Gamma.\bar{\Delta}$ together with an equivalence $F(\Gamma.\bar{\Delta}) \weqto F\Gamma.\Delta$ over $F\Gamma$; and
  \item \defemph{weak section lifting}: for any context extension $\Gamma.\Delta \in \C$ and section $s : F\Gamma \to F\Gamma.F\Delta$ of the generalized projection $p_{F\Delta}$, there exists a section $\bar{s} : \Gamma \to \Gamma.\Delta$ of $p_\Delta$, together with a homotopy $e : F \bar{s} \homot s$ over $F \Gamma$.
  \end{enumerate} 
\end{lemma}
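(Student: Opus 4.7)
The plan is to prove both parts by induction on the length $m$ of $\Delta$, bootstrapping repeatedly from the already-assumed weak type and term lifting. Each base case ($m = 0$) is immediate: $\bar{\Delta}$ is empty, and the only section is the identity.

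For (1), in the inductive step write $\Delta = \Delta'.A$ with $A \in \Ty_\D(F\Gamma.\Delta')$. The inductive hypothesis supplies a context extension $\bar{\Delta}'$ of $\Gamma$ and an equivalence $w : F(\Gamma.\bar{\Delta}') \weqto F\Gamma.\Delta'$ over $F\Gamma$. Pulling $A$ back along $w$ yields a type $w^*A \in \Ty_\D(F(\Gamma.\bar{\Delta}'))$, to which I apply weak type lifting of $F$ at the object $\Gamma.\bar{\Delta}' \in \C$, obtaining $\bar{A} \in \Ty_\C(\Gamma.\bar{\Delta}')$ together with an equivalence $v : F\bar{A} \weqto w^*A$ over $F(\Gamma.\bar{\Delta}')$. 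Taking $\bar{\Delta} := \bar{\Delta}'.\bar{A}$, the required equivalence assembles as the composite
\[
F(\Gamma.\bar{\Delta}'.\bar{A}) \;=\; F(\Gamma.\bar{\Delta}').F\bar{A} \;\xrightarrow{v}\; F(\Gamma.\bar{\Delta}').w^*A \;\xrightarrow{w.A}\; F\Gamma.\Delta'.A,
\]
whose second factor is an equivalence because it is the canonical pullback of $w$ along the fibration $p_A$; here I invoke stability of equivalences under pullback along fibrations in the AKL fibration category (a standard property of the type-theoretic fibration categories supplied by Theorem \ref{thm:akl}). That the overall composite is an equivalence over $F\Gamma$ in $\fibslice{\D}{F\Gamma}$ then follows from Lemma \ref{lem:equiv-in-slice}.

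For (2), in the inductive step write $\Delta = \Delta'.A$ with $A \in \Ty_\C(\Gamma.\Delta')$. Decompose the given section into its projection $s_{m-1} := p_{FA} \circ s : F\Gamma \to F\Gamma.F\Delta'$, which is a section of $p_{F\Delta'}$, together with the term $t \in \Tm_\D(s_{m-1}^*FA)$ that classifies $s$ over $s_{m-1}$. The inductive hypothesis furnishes $\bar{s}_{m-1} : \Gamma \to \Gamma.\Delta'$ and a homotopy $h : F\bar{s}_{m-1} \homot s_{m-1}$ over $F\Gamma$. Transporting along $h$ (using the $\Id$-type structure of $\D$) carries $t$ to a term $t' \in \Tm_\D((F\bar{s}_{m-1})^*FA) = \Tm_\D(F(\bar{s}_{m-1}^*A))$, accompanied by a canonical identification between $t$ and $t'$ over $h$. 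Weak term lifting of $F$ at $\bar{s}_{m-1}^*A$ with the element $t'$ then yields $\bar{t} \in \Tm_\C(\bar{s}_{m-1}^*A)$ together with $e \in \Tm_\D(\Id(F\bar{t}, t'))$. Packaging $\bar{s}_{m-1}$ and $\bar{t}$ into $\bar{s} : \Gamma \to \Gamma.\Delta$, I combine $h$ and $e$ through the transport identification into the required homotopy $F\bar{s} \homot s$ over $F\Gamma$.

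I expect the main obstacle to lie in the final gluing step of (2): assembling a single homotopy of sections over $F\Gamma$ out of the projection-level homotopy $h$ and the identification $e$ between top-level terms that \emph{a priori} live in different (though equivalent) types. This requires some bookkeeping with $\Id$-elimination and functoriality of transport, essentially a form of dependent path concatenation; the tools for this are all present given $\Id$, $\unit$, and $\synSigma$, but the details must be handled carefully. The analogous step in (1) is comparatively clean once right-properness of the AKL fibration category is granted.
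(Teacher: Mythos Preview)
Your proposal is correct and follows the same approach as the paper: induction on the length of the context extension, using weak type/term lifting at each step and the inductive definition of identity contexts to glue. The paper's own proof is a single sentence (``By definition of identity context and induction on the length of the context''), and you have accurately unpacked what that sentence entails, including the right-properness appeal for (1) and the transport/gluing bookkeeping for (2).
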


\begin{proof}
 By definition of identity context and induction on the length of the context.
\end{proof}

One can also use the associated fibration category (Theorem \ref{thm:akl}) to define equivalences in a more homotopy-theoretic way.
Specifically, call a map $F : \C \to \D$ a \defemph{homotopy-theoretic equivalence} if the induced functor $\Ho F : \Ho \C \to \Ho \D$ is an equivalence of categories.
It turns out these two definitions coincide:

\begin{proposition} \label{prop:characterization-of-equivalences}
 A contextual functor is a type-theoretic equivalence if and only if it is a homotopy-theoretic equivalence.
\end{proposition}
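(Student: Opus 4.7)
The plan is to handle the two directions separately.

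\emph{Forward direction.} Assume $F$ is a type-theoretic equivalence. For essential surjectivity of $\Ho F$, any $\Delta \in \D$ is a context extension of $1 = F(1)$, so weak context lifting (Lemma~\ref{lem:cxt-sec-lifting}) provides $\bar \Delta \in \C$ with $F\bar \Delta \weqto \Delta$. For fullness, a map $f : F\Gamma \to F\Gamma'$ corresponds (using $F(1) = 1$ and preservation of canonical pullbacks) to a section of $F(!_\Gamma^* \Gamma')$ over $F\Gamma$, where $!_\Gamma : \Gamma \to 1$ is the terminal map; weak section lifting produces such a section in $\C$, yielding $\bar f : \Gamma \to \Gamma'$ with $F\bar f \homot f$. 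For faithfulness, a homotopy $F\bar f \homot F\bar g$ is a term of $\Id_{F\Gamma'}(F\bar f, F\bar g)$ over $F\Gamma$, which equals $F(\Id_{\Gamma'}(\bar f, \bar g))$ since $F$ preserves $\Id$-types and pullbacks; weak term lifting returns a term of $\Id_{\Gamma'}(\bar f, \bar g)$, i.e., a homotopy $\bar f \homot \bar g$ in $\C$.

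\emph{Backward direction.} Assume $\Ho F$ is an equivalence. For \emph{weak type lifting}, given $A \in \Ty(F\Gamma)$, essential surjectivity gives $\Xi \in \C$ with an equivalence $w : F\Xi \weqto F\Gamma.A$ in $\D$, and fullness gives $\phi : \Xi \to \Gamma$ in $\C$ with $F\phi \homot p_A \, w$. Factor $\phi = q u$ in $\C$ with $u$ a weak equivalence and $q : \Xi'' \fibto \Gamma$ a fibration; the codomain $\Xi''$ is isomorphic to $\Gamma.\bar\Delta$ for some context extension $\bar\Delta$ of $\Gamma$, and iterated $\synSigma$- and $\unit$-types collapse $\bar\Delta$ into a single type $\bar A \in \Ty(\Gamma)$ with $\Gamma.\bar A \equiv \Gamma.\bar\Delta$ over $\Gamma$. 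Applying $F$ produces a zigzag of equivalences $F\Gamma.F\bar A \equiv F\Xi'' \equiv F\Xi \equiv F\Gamma.A$ whose projections to $F\Gamma$ agree up to homotopy; a standard rectification (using cofibrancy of all objects and Lemma~\ref{lem:equiv-in-slice}) then produces an equivalence $F\bar A \weqto A$ over $F\Gamma$. For \emph{weak term lifting}, view $a \in \Tm(FA)$ as a section $F\Gamma \to F\Gamma.FA$; fullness gives $g : \Gamma \to \Gamma.A$ with $Fg \homot a$, and since $F(p_A g) \homot \id_{F\Gamma} = F(\id_\Gamma)$, faithfulness yields a homotopy $h : p_A g \homot \id_\Gamma$ in $\C$. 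Via the canonical pullback, $g$ factors as a term $g' \in \Tm((p_A g)^* A)$ composed with the connecting map; transport of $g'$ along $h$ then produces a strict section $\bar a \in \Tm(A)$ together with a canonical homotopy $\bar a \homot g$, whence $F\bar a \homot a$, giving the required element of $\Id_{FA}(F\bar a, a)$.

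The principal obstacle is the rectification step in weak type lifting: upgrading a homotopy-commuting zigzag of fibrations over $F\Gamma$ (linked through a weak equivalence between their domains) to a single equivalence over the base. This is a standard manoeuvre available because all objects are cofibrant, but the details involve replacing maps by homotopic ones so that the relevant triangles commute strictly before invoking Lemma~\ref{lem:equiv-in-slice}.
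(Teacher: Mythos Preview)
Your forward direction and your argument for weak type lifting match the paper. The gap is in weak term lifting. You must produce a term of $\Id_{FA}(F\bar a, a)$, i.e.\ a homotopy $F\bar a \homot a$ \emph{over} $F\Gamma$; your argument only yields a homotopy of maps $F\Gamma \to F\Gamma.FA$. Decomposing into base and fibre components: the transport homotopy $\bar a \homot g$ has base component $h^{-1}$, while the homotopy $Fg \homot a$ coming from fullness has some base component $k : F(p_A g) \homot \id_{F\Gamma}$. The composite $F\bar a \homot a$ therefore has base component $F(h)^{-1} \cdot k$, a self-homotopy of $\id_{F\Gamma}$. Since $h$ was obtained only from faithfulness of $\Ho F$, you have no relation between $Fh$ and $k$, so this loop need not be reflexivity and the resulting homotopy need not lie over $F\Gamma$. (Attempting to lift $k$ itself so that $Fh$ matches it would be exactly weak term lifting again, so the argument cannot be closed this way.)

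The paper proves your conclusion under the name ``crude section lifting'' and then iterates to fix the base: split the crude homotopy $F\widehat a \homot a$ into a base loop $h_0 : \id_{F\Gamma} \homot \id_{F\Gamma}$ and a fibred part $h_1$; apply crude lifting again to lift $h_0$ to $\widehat{h_0}$ in $\C$; and set $\bar a := (\widehat{h_0})_! \,\widehat a$. A $J$-argument shows that any global section of the free-loop type is invariant under transport along paths, so the new base loop introduced at the second stage is absorbed and one obtains $F\widehat{h_0} \ideq h_0$ over $F\Gamma$. Then $F\bar a = (F\widehat{h_0})_!\, F\widehat a \ideq (h_0)_!\, F\widehat a \ideq a$ fibrewise, as required.
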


\begin{proof}
  We rely on Lemma~
  \ref{lem:homot-cat-of-cxl-cat} throughout.
  
  First, assume $F : \C \to \D$ is a type-theoretic equivalence.
  Then $\Ho F$ is:
  \begin{enumerate}
  \item full, by weak section lifting (Lemma \ref{lem:cxt-sec-lifting}(2)): a map $f : F\Gamma \to F\Delta$ can be viewed as a section of the context extension $F\Gamma.p_{F \Gamma}^* F\Delta \to F\Gamma$ and as such can be lifted (up to homotopy) to a map $\bar{f} : \Gamma \to \Gamma.\Delta$;
  \item faithful, by weak section lifting applied to pullbacks of identity contexts;
  \item essentially surjective, by weak context lifting (Lemma \ref{lem:cxt-sec-lifting}(1)).
  \end{enumerate}
  
  Conversely, assume that $\Ho F$ is an equivalence of categories.
  For weak type lifting, suppose $A \in \Ty(F \Gamma)$.
  Since $\Ho F$ is essentially surjective, one can find $\Gamma' \in \C$ and $w : F\Gamma' \weqto F\Gamma.A$.
  Moreover, since $\Ho F$ is full, there is some $f : \Gamma' \to \Gamma$ such that the triangle
  \[\begin{tikzcd}
    F\Gamma' \ar[rr, "w", "\sim"'] \ar[rd, "Ff"'] & & F\Gamma.A \ar[ld,fib,"p_A"] \\ 
    & F\Gamma &
   \end{tikzcd}\] 
   commutes up to homotopy.
   Factoring $f$ as an equivalence $u$ followed by a fibration $p_\Delta : \Gamma.\Delta \to \Gamma$, and taking the iterated $\synSigma$-type of $\Delta$, we obtain $\bar{A} \in \Ty(\Gamma)$ and a triangle commuting up to homotopy
   \[\begin{tikzcd}
    F\Gamma.F\bar{A} \ar[rr, "w.Fu^{-1}", "\sim"'] \ar[rd, "Fp_{\bar{A}}"'] & & F\Gamma.A \ar[ld,fib,"p_A"] \\ 
    & F\Gamma &
   \end{tikzcd}\]
   Now since $p_A$ is a fibration we can replace $w \cdot Fu^{-1}$ by some homotopic map strictly over $F \Gamma$, as required.

  Lastly, we give weak term lifting; this is a little more involved.
  We start by showing a “crude section lifting” property: for any $\Gamma.\Delta \in \C$ and section $a : F\Gamma \to F(\Gamma.\Delta)$ of $p_{F\Delta}$, there is some section $\widehat{a} : \Gamma \to \Gamma.\Delta$ of $p_\Delta$, together with a homotopy $h : F\widehat{a} \homot a$ (but not yet necessarily over $F\Gamma$, as required in weak term/section lifting) .
  
  Given such $\Gamma.\Delta$ and $a$, by fullness of $\Ho F$ there is some map $a' : \Gamma \to \Gamma.\Delta$ with $F a' \homot a$.
  Now $F(p_{\Delta} a') \homot p_{F\Delta} a = \id_{F \Gamma}$, so by faithfulness of $\Ho F$, $p_\Delta a' \homot \id_{\Gamma}$.
  So since $p_{\Delta}$ is a fibration, we can replace $a'$ by some section $\widehat{a}$ of $p_\Delta$, with $\widehat{a} \homot a'$ and hence $F\widehat{a} \homot a$ as required.

  Now, we can strengthen this to full term-lifting.
  Given $\Gamma$, $A$, $a$, take by crude section-lifting some section $\widehat{a} : \Gamma \to \Gamma.A$ and homotopy $h : F\widehat{a} \homot a : F \Gamma \to F\Gamma.FA$.
  We can split $h$ up into $h_0 = p_{F A}h : \id_{F\Gamma} \homot \id_{F \Gamma}$ and $h_1 \in \Tm_{F \Gamma}(\Id_{FA}((h_0)_! F\widehat{a},a))$, where $(h_0)_!$ denotes transport (as used in the definition of identity contexts).
  In type-theoretic notation,
 \begin{align*}
  x \of F \Gamma & \types h_0(x) : \Id_{F\Gamma} (x,x), \\
  x \of F \Gamma & \types h_1(x) : \Id_{FA(x)}(h_0(x)_! F\widehat{a}(x), a(x)).
 \end{align*}

  We next apply crude section lifting to $h_0 : F \Gamma \to F \Gamma . \Id_{F \Gamma}(\id_{F\Gamma},\id_{F\Gamma})$.
  Written type-theoretically, the result is a term $x \of \Gamma \types \widehat{h_0}(x) : \Id_\Gamma (x,x)$, together with a homotopy $\alpha : F\widehat{h_0} \homot h_0$, which once again we split up into two parts:
 \begin{align*}
  x \of F \Gamma &\types \alpha_0(x) : \Id_{F\Gamma} (x,x), \\
  x \of F \Gamma & \types \alpha_1(x) : \Id_{\Id_{F\Gamma}(x,x)}(\alpha_0(x)_! F\widehat{h_0}(x), h_0(x)).
 \end{align*}

%  Here a small miracle occurs, regarding transport in the type $x \of F \Gamma \types \Id_{F \Gamma} (x,x)$.
  %
  By the $J$-structure ($\Id$-elimination), one can produce a term $x, y \of F \Gamma, u \of \Id_{F \Gamma}(x,y) \types \theta(x,y,u) : \Id_{\Id(y,y)}( u_! F\widehat{h_0}(x), F\widehat{h_0}(y))$.%
  \footnote{In traditional homotopy-theoretic terms: $F \widehat{h_0}$ is a global section of the free loop space of $F \Gamma$, so must land in the \emph{center} of $\pi_0(F\Gamma)$.}
  So, in particular, we get a term $x \of F \Gamma \types \beta(x) : \Id_{\Id_{F\Gamma}(x,x)}(F\widehat{h_0}(x), h_0(x))$, by composing $\theta(x,x,\alpha_0(x))^{-1}$ with $\alpha_1(x)$.

  Now define the corrected lifting of $a$ as $x \of \Gamma \types \bar{a}(x) := \widehat{h_0}(x)_! \widehat{a}(x) : A (x)$.
  The desired equality $x \of F\Gamma \types \Id_{FA} (F\bar{a}(x), a(x))$ is then a composite:
\[ F(\bar{a}(x)) =  F(\widehat{h_0}(x))_! F(\widehat{a}(x)) \overset{\beta(x)_!}{\ideq} h_0(x)_! F(\widehat{a}(x)) \overset{h_1(x)}{\ideq} a(x). \qedhere \]
\end{proof}

This proposition justifies dropping the distinction, and simply calling such functors \emph{equivalences}.
It also immediately gives:

\begin{corollary} \label{cor:2-of-6-cxl-cats}
 The class of equivalences satisfies 2-out-of-6 and is closed under retracts.
\end{corollary}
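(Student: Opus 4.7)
The plan is to transfer both properties across the characterization given by Proposition~\ref{prop:characterization-of-equivalences}. A contextual functor $F : \C \to \D$ is an equivalence precisely when the induced $\Ho F$ is an equivalence of categories, and the assignment $F \mapsto \Ho F$ is a functor $\CxlCat_{\Id,\unit,\synSigma(,\Piext)} \to \Cat$ (composing Theorem~\ref{thm:akl} with the construction of Theorem~\ref{thm:ho-fib-cat}). In particular, $\Ho$ preserves composable strings and retract diagrams, so it suffices to verify the two closure properties for equivalences of categories.

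For closure under retracts, given $\Ho F$ as a retract of the equivalence $\Ho G$ in the arrow category of $\Cat$, I would post-compose further with the localization $\Cat \to \Ho(\Cat)$: equivalences of categories become isomorphisms, retract diagrams are preserved, and retracts of isomorphisms are isomorphisms in any category. Hence the image of $\Ho F$ in $\Ho(\Cat)$ is an isomorphism, i.e.\ $\Ho F$ is itself an equivalence of categories, as required.

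For 2-out-of-6, the same localization trick reduces the claim to the observation that isomorphisms in any category satisfy 2-out-of-6: if $\beta\alpha$ and $\gamma\beta$ are invertible, then $\alpha(\beta\alpha)^{-1}$ and $(\gamma\beta)^{-1}\gamma$ are right and left inverses of $\beta$, forcing $\beta$ to be invertible, after which $\alpha = \beta^{-1}(\beta\alpha)$, $\gamma = (\gamma\beta)\beta^{-1}$, and $\gamma\beta\alpha$ are all invertible by composition. Since all substantive work was already done in Proposition~\ref{prop:characterization-of-equivalences}, no real obstacle remains; the only point meriting a moment's care is verifying that retract diagrams and composable chains in $\CxlCat_{\Id,\unit,\synSigma(,\Piext)}$ genuinely survive passage through $\Ho$, which is automatic from its functoriality.
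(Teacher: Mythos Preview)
Your proposal is correct and follows essentially the same approach as the paper: both reduce the claim, via Proposition~\ref{prop:characterization-of-equivalences} and functoriality of $\Ho$, to the corresponding closure properties for equivalences of categories. The paper simply asserts those properties as known, while you additionally justify them via the localization $\Cat \to \Ho(\Cat)$; this extra detail is sound but not a genuinely different route.
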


\begin{proof}
 Equivalences of categories are closed under 2-out-of-6 and retracts; thus so are equivalences of contextual categories, as their inverse image under $\Ho$ by Proposition~\ref{prop:characterization-of-equivalences}.
\end{proof}

Closing, we note another useful property:

\begin{lemma} \label{lem:fibslicefunc-equiv}
 If a contextual functor $F : \C \to \D$ is an equivalence, then so is the induced functor on slices $\fibslicefunc{F}{\Gamma} : \fibslice{\C}{\Gamma} \to \fibslice{\D}{F\Gamma}$.
\end{lemma}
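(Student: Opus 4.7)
The plan is to verify the two conditions of Definition~\ref{def:equiv-of-cxl-cats} directly for $\fibslicefunc{F}{\Gamma}$, by reducing each to the corresponding property of $F$ itself. The guiding observation is that the fibrant slice is designed so that types, terms, identity contexts, and (via Lemma~\ref{lem:equiv-in-slice}) equivalences between context extensions in $\fibslice{\C}{\Gamma}$ coincide with the same notions computed in $\C$.

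First I would unpack these correspondences. An object of $\fibslice{\C}{\Gamma}$ is a context extension $\Gamma.\Delta$; a type over it in the slice is just an element of $\Ty_\C(\Gamma.\Delta)$, since the father structure and canonical projections are inherited from $\C$; and terms, as sections of these projections, agree on the nose. Moreover, iterating Definition~\ref{def:fibrant-slice-of-cxl-cat} yields $\fibslice{(\fibslice{\D}{F\Gamma})}{F\Gamma.F\Delta} \iso \fibslice{\D}{F\Gamma.F\Delta}$, so ``being over $F\Gamma.F\Delta$'' has the same meaning whether read in $\fibslice{\D}{F\Gamma}$ or in $\D$; and by Lemma~\ref{lem:equiv-in-slice}, the classes of equivalences between such objects also coincide.

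With these identifications in hand, the two lifting properties are immediate. For weak type lifting, given $\Gamma.\Delta \in \fibslice{\C}{\Gamma}$ and $A \in \Ty_{\fibslice{\D}{F\Gamma}}(F\Gamma.F\Delta) = \Ty_\D(F\Gamma.F\Delta)$, applying weak type lifting for $F$ to the context $\Gamma.\Delta \in \C$ produces $\bar{A} \in \Ty_\C(\Gamma.\Delta)$ together with an equivalence $F\bar{A} \weqto A$ over $F\Gamma.F\Delta$, which is exactly the required data in $\fibslice{\D}{F\Gamma}$. For weak term lifting, given $\Gamma.\Delta$, $A \in \Ty_\C(\Gamma.\Delta)$, and a section $a$ of $p_{FA}$, weak term lifting for $F$ at $(\Gamma.\Delta, A, a)$ directly supplies $\bar{a} \in \Tm(A)$ and the required inhabitant of $\Id_{FA}(F\bar{a}, a)$; both transfer back unchanged to the slices.

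I do not expect any substantive obstacle: the whole argument is a bookkeeping exercise verifying that the relevant structures (types, terms, identity contexts, equivalences) in the slice are literally the same as in the ambient category, so that the two clauses of Definition~\ref{def:equiv-of-cxl-cats} for $\fibslicefunc{F}{\Gamma}$ reduce word-for-word to the corresponding clauses for $F$. The only small care-point is invoking Lemma~\ref{lem:equiv-in-slice} in the right place to move the ambient-category equivalence produced by weak type lifting back into the slice.
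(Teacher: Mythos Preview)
Your proposal is correct and is precisely the argument the paper has in mind: the paper's own proof reads in full ``Straightforward, with the use of Lemma~\ref{lem:equiv-in-slice},'' and you have spelled out exactly that straightforward verification, including the one nontrivial point (invoking Lemma~\ref{lem:equiv-in-slice} to transport the ambient equivalence produced by weak type lifting into the slice).
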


\begin{proof}
  Straightforward, with the use of Lemma~\ref{lem:equiv-in-slice}.
\end{proof}

\subsection{Conjectures on internal languages}

In this section, we will provide precise statements of the conjectures establishing dependent type theories as internal languages of (sufficiently structured) higher categories. Thus its goal is to put the results of the remainder of the paper in a broader context.

By a \defemph{category with weak equivalences}, we mean a pair $(\catC,\WEq)$, where $\catC$ is a category and $\WEq$ a wide subcategory of $\catC$ (whose maps we call weak equivalences).
A functor $F$  between categories with weak equivalences $(\catC,\WEq)$, $(\catC',\WEq')$ is \defemph{homotopical} if it preserves weak equivalences.
Write $\weCat$ for the category of categories with weak equivalences and homotopical functors.

Every fibration category $(\catC, \Fib, \WEq)$ has an obvious underlying category with weak equivalences $(\catC, \WEq)$, and by Ken Brown's Lemma \cite[Lem.\ 1.1.12]{hovey:book}, every exact functor is homotopical.
It follows by Theorem \ref{thm:akl} that every contextual category has an underlying category with weak equivalences, and this construction forms a functor $\CxlCat_{\Id,\unit,\synSigma} \to \weCat$.

The category $\weCat$ can itself be regarded as a category with weak equivalences, where the weak equivalences are Dwyer--Kan equivalences (DK-equivalences) \cite{barwick-kan:characterization}.

Let $\Cat_\infty$ denote the full subcategory of the category $\sSet$ of simplicial sets, whose objects are quasicategories \cite[Def.\ 1.5]{joyal:notes-on-quasicategories}. We will consider $\Cat_\infty$ as a category with weak equivalences in which the weak equivalences are categorical equivalences \cite[Def.\ 1.20]{joyal:notes-on-quasicategories}.

The categories with weak equivalences $\weCat$ and $\Cat_\infty$ are DK-equivalent and we will write $\Ho_\infty$ for an equivalence $\weCat \to \Cat_\infty$.
While this functor may be implemented in many ways (see \cite[\textsection 1.6]{barwick:k-theory-of-higher-cats} for several possibilities), they are all equivalent, by \cite[Thm.\ 6.3]{toen:unicity}.
For concreteness, we take $\Ho_\infty$ to be the composite of the hammock localization followed by fibrant replacement and the homotopy coherent nerve.

We will write $\Cl^{\Id,\unit,\synSigma}_\infty$ for the composite functor
\[ \CxlCat_{\Id,\unit,\synSigma} \to \weCat \to[\Ho_\infty] \Cat_\infty \]
and $\Cl^{\Id,\unit,\synSigma,\Piext}$ for its composite with the forgetful functor $\CxlCat_{\Id,\unit,\synSigma, \Piext} \arxivorjournal{\to}{\shortto{0.9em}} \CxlCat_{\Id,\unit,\synSigma}$.

For working with these functors in practice, one can exploit Szumi{\l}o's construction of the \defemph{quasicategory of frames} $\Nf\catC$ in a fibration category $\catC$ \cite[\textsection 3.1]{szumilo:two-models}.
For any fibration category, there is an equivalence $\Nf\catC \equiv \Ho_\infty \catC$ \cite[Cor.\ 4.15]{kapulkin-szumilo}; so for fibration categories, $\Nf\catC$ gives a much more explicit and workable description of the quasicategory $\Ho_\infty\catC$ than is provided by the more general constructions.

\begin{theorem} \leavevmode
\begin{enumerate}
 \item \label{no:Cl-values-Lex}  The functor $\Cl^{\Id,\unit,\synSigma}_\infty$ takes values in the category $\Lex_\infty$ of quasicategories with finite limits and finite limit preserving functors. Moreover, it takes equivalences of contextual categories to categorical equivalences of quasicategories.
 \item \label{no:Cl-values-LCCC} The functor $\Cl^{\Id,\unit,\synSigma, \Piext}_\infty$ takes values in the category $\LCCC_\infty$ of locally cartesian closed quasicategories and locally cartesian closed functors. As before, it takes equivalences of contextual categories to categorical equivalences of quasicategories.
\end{enumerate}
\end{theorem}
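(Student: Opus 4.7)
The plan is to factor $\Cl^{\Id,\unit,\synSigma}_\infty$ through the category of fibration categories and then invoke the comparison theorems of Szumi\l{}o and Kapulkin. By Theorem \ref{thm:akl}, the assignment sending $\C \in \CxlCat_{\Id,\unit,\synSigma}$ to its type-theoretic fibration category structure defines a functor $\CxlCat_{\Id,\unit,\synSigma} \to \FibCat$. Composing with Szumi\l{}o's frame nerve $\Nf$ and using the natural equivalence $\Nf \catC \equiv \Ho_\infty \catC$ from \cite[Cor.\ 4.15]{kapulkin-szumilo}, the functor $\Cl^{\Id,\unit,\synSigma}_\infty$ agrees up to equivalence with $\Nf$ applied to the underlying fibration category.

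For part (1), I would invoke Szumi\l{}o's comparison theorem \cite{szumilo:two-models}: the frame nerve takes values in the subcategory $\Lex_\infty \subset \Cat_\infty$ of finitely complete quasicategories, exact functors are sent to finite-limit preserving functors, and a weak equivalence of fibration categories (an exact functor inducing an equivalence on homotopy categories) is sent to a categorical equivalence of quasicategories. By Proposition \ref{prop:characterization-of-equivalences}, an equivalence in $\CxlCat_{\Id,\unit,\synSigma}$ is precisely such an exact functor, so the equivalence-preservation claim in (1) follows as a direct consequence.

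For part (2), the $\Piext$-structure corresponds to local cartesian closure at the quasicategorical level. I would cite \cite[Ex.\ 2.6.(3)]{kapulkin:locally-cartesian-qcat}, which establishes that for $\C \in \CxlCat_{\Id,\unit,\synSigma,\Piext}$ the frame nerve $\Nf\C$ is a locally cartesian closed quasicategory, and that a $\Piext$-preserving contextual functor induces a locally cartesian closed functor between frame nerves. Combined with part (1), this yields the factorization through $\LCCC_\infty$, with equivalences sent to equivalences as before.

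The main obstacle is not in the plan itself but in its dependencies: essentially all geometric content is concentrated in the two black-boxed theorems above — Szumi\l{}o's presentation theorem, which simultaneously shows that $\Nf$ carries fibration-category structure to finitely complete quasicategory structure and upgrades homotopy-category equivalences to quasicategorical equivalences, and Kapulkin's refinement to the locally cartesian closed setting in the presence of $\Piext$. Once these are imported, the remaining work reduces to a bookkeeping check that the structure preserved on the nose by contextual functors matches the structure required to be preserved between the associated quasicategories, which follows from the naturality built into the frame nerve construction.
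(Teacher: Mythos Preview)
Your approach is essentially the same as the paper's: both reduce everything to the black-boxed results of Szumi\l{}o and Kapulkin via the fibration-category structure of Theorem~\ref{thm:akl} and the identification $\Nf \simeq \Ho_\infty$, with Proposition~\ref{prop:characterization-of-equivalences} supplying the link between type-theoretic equivalences and weak equivalences of fibration categories. One correction: the citation \cite[Ex.\ 2.6.(3)]{kapulkin:locally-cartesian-qcat} records only that a contextual category with $\Id,\unit,\synSigma$ is a fibration category, not that $\Nf\C$ is locally cartesian closed in the presence of $\Piext$; the paper cites \cite[Thm.\ 5.8]{kapulkin:locally-cartesian-qcat} for that, and \cite[p.\ 10]{kapulkin:locally-cartesian-qcat} for the $\Lex_\infty$ claim in part~(1).
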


\begin{proof}
 The first part of (\ref{no:Cl-values-Lex}) is noted in \cite[p.\ 10]{kapulkin:locally-cartesian-qcat}. For the second part, observe that the equivalences of contextual categories are exactly the weak equivalences of their underlying fibration categories, which are in turn preserved by $\Nf$ \cite[Thm.\ 3.3]{szumilo:two-models}.
 
 Similarly, the first part of (\ref{no:Cl-values-LCCC}) is exactly the statement of \cite[Thm.\ 5.8]{kapulkin:locally-cartesian-qcat}, whereas the second part follows immediately by the same reasoning as above.
\end{proof}

In light of the above theorem one can formulate the following conjecture, an $\infty$-categorical analogue of the results of Clairambault and Dybjer \cite{clairmbault-dybjer:biequivalence}, establishing intensional type theory as an internal language for suitable $\infty$-categories.

\begin{conjecture} \label{conj:internal-languages}
 The functors 
 \begin{align*}
  \Cl^{\Id,\unit,\synSigma}_\infty & :  \CxlCat_{\Id,\unit,\synSigma} \to \Lex_\infty
  \\
  \Cl^{\Id,\unit,\synSigma, \Piext}_\infty & : \CxlCat_{\Id,\unit,\synSigma, \Piext} \to \LCCC_\infty
 \end{align*}
  are DK-equivalences of categories with weak equivalences.
\end{conjecture}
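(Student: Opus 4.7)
The plan is to attack both statements in parallel, since they differ only by the additional $\Piext$-structure. The overall strategy is to exhibit a homotopical inverse to $\Cl_\infty$ and verify the round-trip conditions, using the left semi-model structure on $\CxlCat_{\Id,\unit,\synSigma(,\Piext)}$ constructed in the body of this paper as the main organising tool. Since any statement about $\Cl_\infty$ that is invariant under equivalence need only be checked on a class of generators, it should suffice to verify it on cofibrant objects, which one expects to realise as the syntactically generated contextual categories.

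First I would construct a candidate inverse $\mathcal{T} : \Lex_\infty \to \CxlCat_{\Id,\unit,\synSigma}$ (respectively $\LCCC_\infty \to \CxlCat_{\Id,\unit,\synSigma,\Piext}$). Given a quasicategory $\catC$ with the relevant structure, $\mathcal{T}(\catC)$ should be the ``internal language'' of $\catC$: a strict contextual category presenting $\catC$ up to equivalence. Concretely, one can first rectify $\catC$ to a fibration category of the appropriate kind, then pass from that fibration category to a contextual category via a Reedy / coherent-nerve construction whose $n$-th stratum consists of chains of fibrations of length $n$. Two key inputs are needed: (i) a rectification result presenting every $\Lex_\infty$ (resp.\ $\LCCC_\infty$) by a strict fibration-categorical model, and (ii) a way to strictify this model into a contextual category with the required logical structure.

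With $\mathcal{T}$ in hand, essential surjectivity up to equivalence should be immediate: for any $\catC$, the composite $\Cl_\infty(\mathcal{T}(\catC))$ is computed via $\Nf$ of the rectifying fibration category, which is equivalent to $\catC$ by \cite[Cor.\ 4.15]{kapulkin-szumilo}. Fully-faithfulness on mapping spaces would then be reduced, using the semi-model structure, to showing that for cofibrant $\T$ the map
\[
  \Map_{\CxlCat}(\T, \mathcal{T}(\catC)) \to \Map_{\Lex_\infty}(\Cl_\infty \T, \catC)
\]
is a weak equivalence. Cofibrancy of $\T$ means that a contextual functor out of it is determined by an inductively-constructed family of compatible choices of types and terms, and lifting a morphism $\Cl_\infty \T \to \catC$ amounts to making such choices homotopy-coherently inside $\catC$; Corollary~\ref{cor:2-of-6-cxl-cats} and Lemma~\ref{lem:fibslicefunc-equiv} then propagate equivalences through the iterated slices that arise in the induction.

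The main obstacle is strict rectification. A contextual category demands on-the-nose functoriality of substitution, whereas $\Lex_\infty$ and especially $\LCCC_\infty$ provide this structure only up to coherent homotopy. The $\LCCC_\infty$ case is substantially harder than the $\Lex_\infty$ case, as $\Pi$-types must be strictly stable under substitution and satisfy strict $\beta\eta$-rules; extracting such a choice from a quasicategorical locally cartesian closed structure is precisely the coherence problem addressed in neighbouring settings by constructions such as local universes. I therefore expect that closing the argument requires either a suitable universal strictification theorem for locally cartesian closed quasicategories, or a direct semi-model-categorical argument showing that mapping spaces out of a cofibrant type theory are insensitive to the strict-versus-weak distinction of the target's logical structure.
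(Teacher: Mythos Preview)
The statement you are attempting to prove is explicitly a \emph{Conjecture} in the paper, not a theorem; the paper contains no proof of it. The left semi-model structure is constructed precisely as a framework in which to \emph{formulate} this conjecture and make future progress on it, not as a tool that already establishes it. The paper even remarks that only a partial result toward the conjecture was obtained subsequently in \cite{kapulkin-szumilo:internal-language-lex}, reducing the $\Lex_\infty$ case to a comparison between contextual categories and comprehension categories.

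Your proposal is a reasonable outline of how one might hope such a proof would go, and you correctly identify the central obstruction: constructing a homotopical inverse $\mathcal{T}$ requires strictifying a quasicategory with finite limits (resp.\ locally cartesian closed structure) into a genuine contextual category with strictly stable $\Id$, $\unit$, $\synSigma$ (and $\Piext$) structure. This is exactly the coherence problem that remains open, and your final paragraph essentially concedes that you do not have a solution to it. Without that step, neither essential surjectivity nor fully-faithfulness of $\Cl_\infty$ can be established; the semi-model structure on the source category controls the homotopy theory of type theories but says nothing by itself about hitting every object of $\Lex_\infty$ or $\LCCC_\infty$. So your proposal is not a proof, and there is no proof in the paper to compare it against.
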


Ultimately, one would like to extend the above correspondences to include univalent type theories on one side and elementary $\infty$-toposes on the other; however, neither of these notions is yet defined.
On the type-theoretic side, it is not currently clear which rules to choose, of the many proposed for univalent universes and higher inductive types.
On the higher-categorical side, a precise definition of an elementary $\infty$-topos remains to be formulated.
Lurie \cite{lurie:htt} provides a detailed study of \emph{Grothendieck} $\infty$-toposes, but does not pursue the idea of their elementary counterparts \cite[6.1.3.11]{lurie:htt}.

Once these notions have been formulated, one hopes that the functor $\Cl^{\Id,\unit,\synSigma,\Piext}_\infty$ can be promoted to a functor $\Cl^\HoTT_\infty : \CxlCat_\HoTT \to \ElTop_\infty$ (where we write $\CxlCat_\HoTT$ for the category of contextual categories admitting rules for univalent type theories, and $\ElTop_\infty$ for the category of elementary $\infty$-toposes).
Eventually, one hopes to obtain the following diagram, with the horizontal arrows DK-equivalences:

  \[\begin{tikzpicture}[cd-style,yscale=1.2,xscale=5]
    \node (HoTT) at (0,2) {$\CxlCat_\HoTT$};
    \node (DTTPi) at (0,1) {$\CxlCat_{\Id,\unit,\synSigma, \Piext}$};
    \node (DTT) at (0,0) {$\CxlCat_{\Id,\unit,\synSigma}$};
    \node (ElemTop) at (1,2) {$\ElTop_\infty$};
    \node (LCCC) at (1,1) {$\LCCC_\infty$};
    \node (Lex) at (1,0) {$\Lex_\infty$};
    \draw[cd-arrow-style,dashed] (HoTT) to (DTTPi);
    \draw[cd-arrow-style] (DTTPi) to (DTT);
    \draw[cd-arrow-style,dashed] (ElemTop) to (LCCC);
    \draw[cd-arrow-style] (LCCC) to (Lex);
    \draw[cd-arrow-style,dashed] (HoTT) to node {$\Cl^\HoTT_\infty$} node[swap] {$(\sim)$} (ElemTop);
    \draw[cd-arrow-style] (DTT) to node {$\Cl^{\Id,\unit,\synSigma}_\infty$}  node[swap] {$(\sim)$}  (Lex);
    \draw[cd-arrow-style] (DTTPi) to node {$\Cl^{\Id,\unit,\synSigma, \Piext}_\infty$} node[swap] {$(\sim)$}  (LCCC);
  \end{tikzpicture}\]
  
\begin{remark}
  The categories of the right hand column may be seen as the $(\infty,1)$-cores of larger $(\infty,2)$-categories.
  One may wonder if the maps $\Cl_\infty$ are in fact $(\infty,2)$-equivalences, for some yet-to-be-defined $(\infty,2)$-category structures on $\CxlCat_{(\ldots)}$.

  By analogy with 1-categorical settings, we hope that this should be the case for $\Cl^{\Id,\unit,\synSigma}_\infty$, but do not expect it for $\Cl^{\Id,\unit,\synSigma, \Piext}_\infty$ or $\Cl^\HoTT_\infty$.
  Indeed, we do not expect the full $(\infty,2)$-category $\LCCC_\infty$ to be as well-behaved at all as its $(\infty,1)$-core, essentially due to the non-covariance of exponentials.
  
  This phenomenon appears most simply in, for example, the fact that if $\C[A]$ is the free cartesian closed category on an object, and $F, G : \C[A] \to \D$ are cartesian functors (determined by the objects $FA, GA \in \D$), then natural \emph{isomorphisms} $\alpha : F \iso G$ are determined uniquely by isomorphisms $\alpha_A : FA \iso GA$, but no such nice property holds for general natural transformations $F \to G$.
\end{remark}

Since this paper was first made publicly available in 2016, a part of Conjecture \ref{conj:internal-languages} was proven in \cite{kapulkin-szumilo:internal-language-lex}.
Specifically, the $\infty$-category $\Lex_\infty$ is shown there to be equivalent to the $\infty$-category of comprehension categories with $\Id$, $\unit$, $\synSigma$-types, which reduces the conjecture to a comparison between contextual categories and comprehension categories with the appropriate structure.

\subsection{Fibrations and cofibrations}

We define in this section two cofibrantly generated weak factorization systems $(\Cof,\TFib)$ and $(\Anod,\Fib)$ on $\CxlCat_{\Id,\unit,\synSigma (, \Piext)}$.
To do so, we first set up generating sets of left maps, whose domains and codomains are presented in Definition \ref{def:free-cxl-cats}, as freely generated objects in $\CxlCat_{\Id,\unit,\synSigma}$.
(The existence of freely generated objects follows from the presentation of $\CxlCat_{\Id,\unit,\synSigma(,\Piext)}$ as models of an essentially algebraic theory.)

\begin{definition} \label{def:free-cxl-cats}
 Define the following freely generated objects in $\CxlCat_{\Id,\unit,\synSigma}$:
\begin{enumerate}
  \item $\freeCxlCat{\Gamma_n}$ is freely generated by a context of length $n$.

  \item $\freeCxlCat{\Gamma_n \types A}$ is freely generated by a context $\Gamma$ of length $n$, and a type $A$ over this context.
(Of course $\freeCxlCat{\Gamma_n \types A} \iso \freeCxlCat{\Gamma_{n+1}}$, but we distinguish them notationally for readability.)

  \item $\freeCxlCat{\Gamma_n \types a : A}$ is freely generated by $\Gamma$, $A$ as in  $\freeCxlCat{\Gamma_n \types A}$, and a section of $p_A$.

  \item $\freeCxlCat{\Gamma_n \types A \equiv A'}$ is freely generated by a context $\Gamma$ of length $n$, types $A$, $A'$ over $\Gamma$, and maps $f, g_l, g_r, \alpha_l, \alpha_r$ constituting an equivalence from $A$ to $A'$ over $\Gamma$.

  \item $\freeCxlCat{\Gamma_n \types e : \Id_A(a,a')}$ is freely generated by $\Gamma$, $A$ as in  $\freeCxlCat{\Gamma_n \types A}$, and a section of the composite projection map $\Gamma.A.A.\Id_A \to \Gamma$ (giving all three: $a$, $a'$,  and $e$).
\end{enumerate}  
\end{definition}

Applying the left adjoint functor $F : \CxlCat_{\Id,\unit,\synSigma} \to \CxlCat_{\Id,\unit,\synSigma,\Piext}$ gives similarly freely generated objects in $\CxlCat_{\Id,\unit,\synSigma,\Piext}$.
When necessary for disambiguation, we may distinguish these different incarnations as e.g.\ $\freeCxlCat[\Id,\unit,\synSigma]{\Gamma_n}$ vs.\ $\freeCxlCat[\Id,\unit,\synSigma,\Piext]{\Gamma_n}$; but when it is clear which category we are working in, or when statements apply to both of them, we write just $\freeCxlCat{\Gamma_n}$, and so on.

\begin{definition} \label{def:generating-left-maps}
  Take $I$ and $J$ to be the following sets of maps in $\CxlCat_{\Id,\unit,\synSigma(,\Piext)}$:

\begin{enumerate}
  \item $I$ consists of the evident inclusions $\freeCxlCat{\Gamma_n} \to \freeCxlCat{\Gamma_n \types A}$ and $\freeCxlCat{\Gamma_n \types A} \to \freeCxlCat{\Gamma_n \types a:A}$, for all $n \in \N$;

  \item $J$ consists of the evident inclusions $\freeCxlCat{\Gamma_n \types A} \to \freeCxlCat{\Gamma_n \types A \equiv A'}$ and $\freeCxlCat{\Gamma_n \types a:A} \to \freeCxlCat{\Gamma_n \types e:\Id_A (a,a')}$, for all $n \in \N$.
\end{enumerate}
\end{definition}

\begin{definition}\label{def:cont-cat-classes}
  In each of $\CxlCat_{\Id,\unit,\synSigma}$ and $\CxlCat_{\Id,\unit,\synSigma, \Piext}$, we define the classes of maps $\TFib := \rorth{I}$, $\Cof := \lorth{\TFib}$, $\Fib := \rorth{J}$, and $\Anod := \lorth{\Fib}$.
  Call maps in these classes \defemph{trivial fibrations}, \defemph{cofibrations}, \defemph{fibrations}, and \defemph{anodyne maps}.
\end{definition}

Unwinding the universal properties of the maps in $I$, we see that a map $F : \C \to \D$ is a trivial fibration just if types and terms lift along it on the nose (we will call these properties \defemph{strict type lifting} and \defemph{strict term lifting}); that is, for any $\Gamma \in \C$ and $A \in \Ty(F \Gamma)$, there is some $\bar{A} \in \Ty(\Gamma)$ with $F(\bar{A}) = A$, and similarly for terms.
Note that since clearly strict type/term lifting implies the corresponding weak version, every trivial fibration is also a weak equivalence.
These conditions are a strong form of conservativity, considered in \cite[Def.~4.2.5]{lumsdaine:thesis} as \defemph{contractibility}; cf.~also \cite[Thm.~3.2.5]{hofmann:thesis}.

Similarly, the lifting properties of a fibrations can be seen explicitly as \defemph{equivalence-lifting} and \defemph{path-lifting} respectively.
Note that checking this equivalence-lifting criterion directly would be rather tedious in practice, since it involves lifting \emph{structured} equivalences.
In Corollary~\ref{cor:cheap-fibrations} below, we show that this is happily unnecessary: it is enough to lift unstructured equivalences.

Using the small object argument, we have:
\begin{proposition}
  $(\Cof,\TFib)$ and $(\Anod,\Fib)$ are both weak factorization systems.
\end{proposition}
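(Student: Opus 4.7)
The plan is to invoke Quillen's small object argument; this is the standard tool for producing cofibrantly generated weak factorization systems, and indeed the prose preceding the statement advertises that this is what will be used. The classes $\Cof$ and $\Anod$ are defined as the left-orthogonal complements $\lorth{\TFib}$ and $\lorth{\Fib}$, so the lifting relations $\Cof \orthog \TFib$ and $\Anod \orthog \Fib$, together with closure of all four classes under retracts, hold automatically. The only substantive axiom is the existence of functorial factorizations.

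To apply the small object argument, it suffices to verify that the domains (and codomains) of the generating sets $I$ and $J$ are $\omega$-small in $\CxlCat_{\Id,\unit,\synSigma(,\Piext)}$. Since this category is the category of models of an essentially algebraic theory whose signature has only operations of finite arity, it is locally finitely presentable; in particular, cocomplete, and with the $\omega$-small objects coinciding with the finitely presentable ones. Each of $\freeCxlCat{\Gamma_n}$, $\freeCxlCat{\Gamma_n \types A}$, $\freeCxlCat{\Gamma_n \types a : A}$, $\freeCxlCat{\Gamma_n \types A \equiv A'}$, and $\freeCxlCat{\Gamma_n \types e : \Id_A(a,a')}$ is presented by finitely many generators in this theory, and is therefore finitely presentable, hence $\omega$-small. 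The small object argument then produces, for each morphism $F$ and for each of $I$ and $J$, a functorial factorization of $F$ as a transfinite composite of pushouts of maps in $I$ (respectively $J$), followed by a morphism in $\rorth{I} = \TFib$ (respectively $\rorth{J} = \Fib$). Since $\lorth{\rorth{I}}$ is always closed under pushouts, transfinite composites, and retracts, the left factor lies in $\Cof$; similarly the left factor of the other factorization lies in $\Anod$. This yields both weak factorization systems.

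The only point where content might seem to be hiding is the appeal to local finite presentability of $\CxlCat_{\Id,\unit,\synSigma(,\Piext)}$ and to $\omega$-smallness of the generating objects. However, because the essentially algebraic presentation of these categories involves only operations and equations of finite arity, and because each object in Definition~\ref{def:free-cxl-cats} is specified by a finite list of generating sort- and operation-data, these verifications are entirely routine and need no information about identity-, unit-, sum-, or function-types beyond their description as finitary essentially algebraic structure. I therefore do not expect any genuine obstacle here beyond bookkeeping.
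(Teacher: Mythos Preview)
Your proposal is correct and matches the paper's approach: the paper's proof is simply the one-line ``Using the small object argument, we have:'' preceding the statement, and you have supplied the standard justification (local finite presentability of $\CxlCat_{\Id,\unit,\synSigma(,\Piext)}$ as models of a finitary essentially algebraic theory, and finite presentability of the generating domains and codomains) for why that argument applies.
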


  The forgetful functor $\CxlCat_{\Id,\unit,\synSigma,\Piext} \to \CxlCat_{\Id,\unit,\synSigma}$ preserves and reflects fibrations and trivial fibrations, while its left adjoint preserves cofibrations and anodyne maps, since $I^{\Id,\unit,\synSigma,\Piext}$, $J^{\Id,\unit,\synSigma,\Piext}$ were the images of $I^{\Id,\unit,\synSigma}$, $J^{\Id,\unit,\synSigma}$ under the left adjoint.
  Note however that the forgetful functor will \emph{not} generally preserve cofibrations or anodyne maps.
 
  It also follows automatically that a map of contextual categories is a cofibration (resp.~anodyne) just if it is a retract of a cell complex built from the basic maps in $I$ (resp.~$J$).
  We will not make formal use of this fact, but it is helpful for intuition: a typical cofibration is an $I$-cell complex, i.e.\ an extension of type theories obtained by repeatedly adjoining new types and terms (possibly infinitely many), but no new judgemental equalities.
  In particular, a typical cofibrant object is a type theory generated (over the constructors $\Id,\unit,\synSigma$ or $\Id,\unit,\synSigma,\Piext$ under consideration) just by algebraic type and term rules, with no extra definitional equalities.
  Similarly, a typical anodyne map is a $J$-cell complex, i.e.\ an extension built by repeatedly adjoining new terms and types along with equivalences or propositional equalities to pre-existing ones.

\begin{proposition} \label{prop:all-objects-fibrant}
 Every object in $\CxlCat_{\Id,\unit,\synSigma (,\Piext)}$ is fibrant.
\end{proposition}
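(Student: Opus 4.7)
The plan is to show that the unique map $\C \to \mathbf{1}$ has the right lifting property against the generators of $\Anod$, namely the maps in $J$. By the usual adjointness between freely generated CwA's and data in CwA's, a lifting problem
\[ \begin{tikzcd} \freeCxlCat{\Gamma_n \types A} \ar[r] \ar[d] & \C \ar[d] \\ \freeCxlCat{\Gamma_n \types A \equiv A'} \ar[r] \ar[ru,dashed] & \mathbf{1} \end{tikzcd} \]
amounts to the following: given any length-$n$ context $\Gamma$ and type $A \in \Ty_\C(\Gamma)$, produce another type $A' \in \Ty_\C(\Gamma)$ together with a \emph{structured} equivalence $A \equiv A'$ over $\Gamma$ (i.e.\ maps $f, g_l, g_r$ and homotopies $\alpha_l, \alpha_r$). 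Similarly, lifting against the generator $\freeCxlCat{\Gamma_n \types a:A} \to \freeCxlCat{\Gamma_n \types e:\Id_A(a,a')}$ amounts to: given $\Gamma$, $A$, and $a \in \Tm_\C(A)$, produce $a' \in \Tm_\C(A)$ and an identity term $e \in \Tm_\C(\Id_A(a,a'))$.

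Both lifts are entirely trivial. For the first, take $A' := A$, choose all of $f, g_l, g_r$ to be $\id_{\Gamma.A}$, and use the reflexivity homotopies built from $\refl$ on the identity context of $A$ over $\Gamma$ (which exist by the $\Id$-structure and the construction of identity contexts). For the second, take $a' := a$ and $e := \refl_a$, which exists because $\C$ carries $\Id$-types.

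Since these constructions work uniformly for every $n$ and every choice of $\Gamma, A, a$, the map $\C \to \mathbf{1}$ has the right lifting property against every map in $J$, hence against every map in $\Anod = \lorth{(\rorth{J})}$. This gives the claim. There is no real obstacle here; the proof is essentially just the observation that identity equivalences and reflexivity terms always exist in a contextual category with $\Id$-types, and that the universal properties of the free objects in $J$ reduce fibrancy to exactly these two facts.
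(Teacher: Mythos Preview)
Your proof is correct and is essentially the same argument as the paper's: the paper phrases it as ``the generating anodyne maps all have retractions, given by the identity equivalence and reflexivity term'', which is exactly the content of your explicit liftings (taking $A' := A$ with the identity structured equivalence, and $a' := a$ with $e := \refl_a$). The only difference is packaging: the paper observes once that a retraction of a generating map yields all the required lifts, whereas you unwind the universal property directly.
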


\begin{proof}
 Immediate since the generating anodyne maps all have retractions, given by the identity equivalence and reflexivity term, respectively.
\end{proof}

  We close by showing that fibrations interact with equivalences as one would hope.

\begin{proposition} \label{prop:tfib-iff-fib-and-weq}
  A map in $\CxlCat_{\Id,\unit,\synSigma(,\Piext)}$ is a trivial fibration if and only if it is both a fibration and a weak equivalence.
\end{proposition}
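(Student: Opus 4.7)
The plan is to unwind the lifting properties against the generating sets $I$ and $J$ in both directions.

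For the forward direction, suppose $F : \C \to \D$ is a trivial fibration, so that it enjoys strict type and term lifting. Strict lifting trivially implies weak lifting (the lifted data agrees on the nose with its image, so one may use identity equivalences and reflexivity terms as witnesses), so $F$ is a weak equivalence. To show $F$ is a fibration, I lift it against each generating anodyne map in $J$. For the first, $\freeCxlCat{\Gamma_n \types A} \to \freeCxlCat{\Gamma_n \types A \equiv A'}$, a lifting square provides $(\Gamma, A)$ in $\C$ together with extra data $(A', f, g_l, g_r, \alpha_l, \alpha_r)$ assembling into a structured equivalence $FA \equiv A'$ over $F\Gamma$ in $\D$. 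I would build the lift iteratively: first lift $A'$ using strict type lifting, then lift $f, g_l, g_r$ (which, being morphisms over $F\Gamma$, correspond to terms of suitable types in the appropriate context extensions) using strict term lifting, and finally lift the homotopies $\alpha_l, \alpha_r$ as terms of the (strictly preserved) identity types. Since the definition of structured equivalence imposes no equations beyond the typing, the lifted data automatically forms a structured equivalence. The second generating anodyne, $\freeCxlCat{\Gamma_n \types a:A} \to \freeCxlCat{\Gamma_n \types e:\Id_A(a,a')}$, is handled analogously: strictly lift $a'$ and then $e$ as a term of the preserved identity type.

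For the converse, suppose $F$ is both a fibration and a weak equivalence. Given $A \in \Ty(F\Gamma)$, weak type lifting furnishes some $A_0 \in \Ty(\Gamma)$ together with an equivalence $FA_0 \weqto A$ over $F\Gamma$; by Definition~\ref{def:equiv-in-cxl-cat} I may choose some structuring of this equivalence. This data assembles into a contextual functor $\freeCxlCat{\Gamma_n \types A \equiv A'} \to \D$ extending the map $\freeCxlCat{\Gamma_n \types A} \to \C$ specified by $(\Gamma, A_0)$. Since $F$ is a fibration, a lift of this square exists, producing $\bar{A} \in \Ty(\Gamma)$ with $F\bar{A} = A$ strictly. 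Symmetrically, for strict term lifting, given $a \in \Tm(FA)$ apply weak term lifting to obtain $a_0 \in \Tm(A)$ together with an identity term witnessing $\Id_{FA}(Fa_0, a)$; this sets up a lifting square against the second generating anodyne map, and the resulting fibration-lift supplies a strict term lift of $a$.

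The main subtlety, and the only place that calls for attention, is the bookkeeping in the forward direction: I must lift the components of the structured equivalence in the right order (types before the terms whose type depends on them, terms before the homotopies whose identity type mentions them), making explicit use of the fact that $F$ preserves context extensions, pullbacks, and identity types strictly, so that each successive piece of data can be viewed as a term of a type that has already been lifted on the nose.
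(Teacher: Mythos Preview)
Your proof is correct and follows essentially the same approach as the paper's: the converse direction matches the paper's argument exactly (weak-lift then use the fibration property to strictify), while for the forward direction the paper simply declares it ``clear'' and you have correctly unpacked what that entails by iteratively lifting the components of the structured equivalence via strict type/term lifting.
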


\begin{proof}
  It is clear that any trivial fibration is both a weak equivalence and a fibration.

  For the converse, suppose $F : \C \to \D$ is a weak equivalence and a fibration.
  
  Given a context $\Gamma$ in $\C$ and type $A$ over $F\Gamma$ in $\D$, we may find (since $F$ is a weak equivalence) some type $A'$ over $\Gamma$ in $\C$, together with an equivalence $w : F(A') \equiv A$ over $\Gamma$ in $\D$.
  Choose left and right quasi-inverses for $w$.
  Since $F$ is a fibration, we may now lift $A$ and $w$ together to $\C$.
  In particular, we have succeeded in lifting $A$ on the nose, as required.

  Strict lifting of terms is entirely analogous: first lift the term up to equivavalence (since $F \in \WEq$), and then use that equivalence to lift the original term on the nose (by $F \in \Fib$).   
\end{proof}

%%% Local Variables: 
%%% mode: latex
%%% TeX-master: "htott-advances.tex"
%%% End: 

\section{Categories with attributes}

For assembling our classes of maps into semi-model structures on $\CxlCat_{\Id,\unit,\synSigma(,\Piext)}$, our main technical workhorse will be the category $\Eqv[\C]$ of \defemph{span-equivalences} in $\C$---almost a path object, but not quite---along with some related auxiliary constructions.

All these are most naturally viewed not directly as constructions on contextual categories, but as living in the slightly more general world of \defemph{categories with attributes} (CwA’s).

In this section, we therefore recall and develop some background results on CwA’s and their relationship with contextual categories, before tackling the span-equivalence constructions themselves in Section~\ref{sec:spans}.

\subsection{Categories with Attributes: background}

\begin{definition}
 A \defemph{category with attributes} (CwA) consists of:
 \begin{enumerate}
  \item a category $\C$, with a chosen terminal object $1$;
  \item a functor $\Ty : \C^\op \to \Set$;
  \item an assignment to each $A \in \Ty(\Gamma)$, an object $\Gamma.A \in \C$ and a map $p_A : \Gamma.A \to \Gamma$;
  \item for each $A \in \Ty(\Gamma)$ and $f : \Delta \to \Gamma$, a map $f.A : \Delta.f^*A \to \Gamma.A$ (called the \defemph{connecting map}) such that the following square is a pullback:
  \[\begin{tikzcd}
    \Delta.f^*A \ar[r, "f.A"] \ar[d, fib, "p_{f^*A}"'] \arrow[dr, phantom, "\lrcorner", very near start] & \Gamma.A \ar[d, fib, "p_A"] \\
    \Delta \ar[r, "f"] & \Gamma
  \end{tikzcd}\]  
 \end{enumerate}
\end{definition}

As defined, categories with attributes are models for an evident essentially algebraic theory.
A map of categories with attributes is a homomorphism of such models: explicitly, a functor $F : \C \to \C'$ and transformation $F_\Ty : \Ty_\C \to \Ty_{\C'}\cdot F$, strictly preserving all the structure (chosen terminal object, context extension, and so on).

Write $\CwA$ for the category of categories with attributes.
Just as in the case of contextual categories, one may equip categories with attributes with additional structure corresponding to different type constructors.
The translations of these structures from the language of contextual categories to that of categories with attributes are straightforward.
We will write $\CwA_{\Id, \unit, \synSigma}$ and $\CwA_{\Id, \unit, \synSigma, \Piext}$ for the categories of categories with attributes equipped with the corresponding extra structure, and when a statement applies to both of these cases, we will indicate it by writing $\CwA_{\Id, \unit, \synSigma(, \Piext)}$

\begin{definition}
  The presheaf $\Ty$ defined in Notation~\ref{notation:cxl-cats} allows us to regard any contextual category as a category with attributes.
  This extends to an evident faithful functor $\CxlCat \to \CwA$; and indeed exhibits $\CxlCat$ as the full subcategory consisting of CwA’s equipped with a suitable grading on objects, since such a grading is unique if it exists, and is automatically preserved by any CwA map.
\end{definition}

To go the other way, we generalize Definition~\ref{def:fibrant-slice-of-cxl-cat}:

\begin{definition} \label{def:fibrant-slice-of-cwa}
  Let $\C$ be a CwA, and $\Gamma$ an object of $\C$.
  
  A \defemph{context over  $\Gamma$} is a sequence $\Delta = (A_0,\cdots,A_{n-1})$, where $A_0 \in \Ty_\C(\Gamma)$, \ldots, $A_i \in \Ty_\C(\Gamma . A_0 . \cdots . A_{i-1})$, \ldots.
  Any context induces an evident \defemph{context extension} $\Gamma.\Delta := \Gamma . A_0 . \cdots . A_{n-1}$, with projection map $p_\Delta : \Gamma.\Delta \to \Gamma$.

  The \defemph{fibrant slice} $\fibslice{\C}{\Gamma}$ is the contextual category defined as follows:

  \begin{enumerate}
  \item objects of degree $n$ are contexts $\Delta$ of length $n$ over $\Gamma$;
  \item $\fibslice{\C}{\Gamma} (\Delta,\Delta') := \C/\Gamma(\Gamma . \Delta, \Gamma.\Delta')$, and the category structure is inherited from $\C/\Gamma$;
  \item reindexing and the connecting maps are inherited directly from $\C$.
  \end{enumerate}

  Moreover, an $\Id$-type (resp.\ $\unit$, $\synSigma$, extensional $\synPi$-type) structure on $\C$ induces one on $\fibslice{\C}{\Gamma}$.

  A map $f : \Gamma' \to \Gamma$ in $\C$ induces a contextual functor $f^* : \fibslice{\C}{\Gamma} \to \fibslice{\C}{\Gamma'}$, functorially in $f$, and preserving all logical structure under consideration.

  Similarly, for any CwA map $F : \C \to \D$ and object $\Gamma \in \C$, there is as before an induced slice functor $\fibslicefunc{F}{\Gamma} : \fibslice{\C}{\Gamma} \to \fibslice{\D}{F \Gamma}$, preserving any logical structure that $F$ does.
\end{definition}

  In particular, we call $\fibslice{\C}{1}$ the \defemph{contextual core} of $\C$, and denote this by $\core \C$.

\begin{proposition}
  The assignation $\C \mapsto \core \C$ extends to a functor $\core : \CwA \to \CxlCat$, right adjoint to the inclusion functor (so exhibiting $\CxlCat$ as a coreflective subcategory of $\CwA$), and similarly for the categories with ${\Id,\unit,\synSigma}$ and ${\Id,\unit,\synSigma,\Piext}$:

  \[\begin{tikzcd}[row sep = scriptsize]
    \CxlCat_{\Id,\unit,\synSigma,\Piext} \ar[rr, hook, bend left=10] \ar[d] \ar[rr, phantom, "\scriptstyle \bot"] & &
    \CwA_{\Id,\unit,\synSigma,\Piext} \ar[ll, bend left=10] \ar[d] \\ 
    \CxlCat_{\Id,\unit,\synSigma} \ar[rr, hook, bend left=10] \ar[d]  \ar[rr, phantom, "\scriptstyle \bot"] & &
    \CwA_{\Id,\unit,\synSigma} \ar[ll, bend left=10] \ar[d] \\ 
    \CxlCat \ar[rr, hook, bend left=10] \ar[rr, phantom, "\scriptstyle \bot"] & &
    \CwA \ar[ll, bend left=10] \\ 
  \end{tikzcd}\]
\end{proposition}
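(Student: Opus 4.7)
The plan is to establish the coreflection by constructing the counit and verifying its universal property; the three levels of structure then follow by a single uniform argument.

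For functoriality of $\core$: a $\CwA$-map $F : \C \to \D$ strictly preserves the chosen terminal object, so $F(1_\C) = 1_\D$ on the nose; hence the slice functor $\fibslicefunc{F}{1_\C} : \fibslice{\C}{1_\C} \to \fibslice{\D}{1_\D}$ of Definition~\ref{def:fibrant-slice-of-cwa} \emph{is} the desired contextual functor $\core F : \core \C \to \core \D$, and functoriality in $F$ is immediate from that of the fibrant slice construction.

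For each CwA $\D$, I would define the counit $\varepsilon_\D : i(\core \D) \to \D$ as the evident ``flattening'' map sending a context $\Delta = (A_0,\ldots,A_{n-1})$ over $1_\D$ to the iterated context extension $1_\D.A_0.\cdots.A_{n-1}$ in $\D$, and acting as the identity on morphisms (since by definition a morphism $\Delta \to \Delta'$ in $\core \D$ is precisely a morphism $1_\D.\Delta \to 1_\D.\Delta'$ in $\D/1_\D \iso \D$). This is manifestly a CwA map, natural in $\D$, and preserves any of the logical structures $\Id$, $\unit$, $\synSigma$, $\Piext$ carried by $\D$.

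The universal property to verify is: for any contextual category $\C$ and CwA map $F : i(\C) \to \D$, there is a unique contextual functor $\widetilde{F} : \C \to \core \D$ with $\varepsilon_\D \circ i(\widetilde{F}) = F$. Here the crucial input is the definitional property of contextual categories that each object $\Gamma \in \ob_n \C$ is \emph{uniquely} exhibited as a context extension $1_\C.A_0.\cdots.A_{n-1}$. Compatibility with $\varepsilon_\D$ therefore forces $\widetilde{F}(\Gamma) := (F_\Ty(A_0),\ldots,F_\Ty(A_{n-1}))$ as an object of $\core \D$, and $\widetilde{F}(g) := F(g)$ on morphisms; these formulas visibly do define a contextual functor, since $F$ was already a CwA map strictly preserving the relevant structure (terminal, father, projections, connecting maps, canonical pullbacks).

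The structured cases follow by the same recipe: the inclusion and $\core$ both preserve $\Id$, $\unit$, $\synSigma$, and $\Piext$ (for $\core$, this is the penultimate clause of Definition~\ref{def:fibrant-slice-of-cwa}), so the counit and the universal factorization above land in the appropriate structured subcategories, yielding a coreflection at each of the three levels that commutes with the forgetful functors along the vertical arrows of the displayed diagram. I expect no substantial obstacle; the only delicate point is the uniqueness-of-decomposition property in contextual categories that pins down $\widetilde{F}$ on objects and makes the whole argument go through.
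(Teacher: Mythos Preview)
Your proposal is correct and follows essentially the same approach as the paper: both verify the universal property of the counit $\fibslice{\D}{1} \to \D$ by exploiting the unique decomposition of each object of a contextual category as $1.A_0.\cdots.A_{n-1}$ to force the factorization $\widetilde{F}$ on objects, and then note that the counit and factorization preserve whatever logical structure is present. Your write-up is simply more explicit (spelling out functoriality of $\core$ and naming the counit), while the paper compresses this into a couple of sentences.
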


\begin{proof}
  Given $F : \C \to \D$, with $\C$ a contextual category and $\D$ an arbitrary CwA, $F$ factors uniquely through $\fibslice{\D}{1} \to \D$ via a contextual map $\bar{F} : \C \to \fibslice{\D}{1}$, since every $\Gamma \in \ob_n \C$ is uniquely expressible in the form $1.A_0 \ldots A_{n-1}$, and hence must be sent under $\bar{F}$ to the sequence $(FA_0,\ldots,FA_n)$.
  
  It is similarly routine to check that $\fibslice{\D}{1} \to \D$ preserves all logical structure under consideration, and given $F$ as above, $\bar{F}$ preserve such structure if and only if $F$ does. 
\end{proof}

\subsection{Equivalences in CwAs}

In a contextual category with identity types, Definition~\ref{def:equiv-in-cxl-cat} gives a good notion of when a map is an equivalence.

In a category with attributes, however, objects are not in general built up out of types (i.e., there may objects $\Gamma \in \C$ whose canonical map $\Gamma \to 1$ cannot be written as a composite of $p$-maps).
So we do not have identity contexts for arbitrary objects, nor hence a notion of homotopy between arbitrary maps; so we need a slightly less direct definition of equivalences.

\begin{definition} \label{def:equiv-in-cwa}
  Let $\C$ be a CwA with $\Id$-types.
  A map $\Gamma \to \Delta$ in $\C$ an \defemph{equivalence} if the induced contextual functor $f^* : \fibslice{\C}{\Delta} \to \fibslice{\C}{\Gamma}$ is an equivalence of contextual categories (in the sense of Definition~\ref{def:equiv-of-cxl-cats}).
\end{definition}

When $\C$ is contextual, or more generally when $f$ lies in some fibrant slice of $\C$, this coincides with the established definition:

\begin{proposition} \label{prop:map-equiv-iff-pullback-equiv}
  Let $\C$ be a CwA with $\Id$-types, and $f : \Gamma.\Delta_1 \to \Gamma.\Delta_2$ a map between context extensions over $\Gamma \in \C$.
  Then $f$ is an equivalence in the sense of Definition~\ref{def:equiv-in-cwa} if and only if, considered as a map $\Delta_1 \to \Delta_2$ in the contextual category $\fibslice{\C}{\Gamma}$, it is an equivalence in the sense of Definition~\ref{def:equiv-in-cxl-cat}.
\end{proposition}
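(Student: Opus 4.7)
The key reduction uses the slice-of-slice identification $\fibslice{(\fibslice{\C}{\Gamma})}{\Delta_i} \iso \fibslice{\C}{\Gamma.\Delta_i}$: under this identification, the pullback functor in the statement is just pullback along $f$ taken internally in the contextual category $\mathscr{D} := \fibslice{\C}{\Gamma}$. So the proposition reduces to the purely contextual claim: for a map $f : X \to Y$ in a CxlCat $\mathscr{D}$ with $\Id$-types, $f$ is an equivalence iff $f^* : \fibslice{\mathscr{D}}{Y} \to \fibslice{\mathscr{D}}{X}$ is an equivalence of CxlCats. Both directions rely on a ``homotopy invariance of pullback'' principle: if $h \homot k : X \to Y$ in $\mathscr{D}$, then $h^*$ and $k^*$ are naturally equivalent as CxlCat-functors $\fibslice{\mathscr{D}}{Y} \to \fibslice{\mathscr{D}}{X}$; this is proved uniformly by $\Id$-elimination, reducing to the trivial reflexivity case.

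For $(\Leftarrow)$, pick a structured equivalence, giving $g_1, g_2 : Y \to X$ with $fg_1 \homot \id_Y$ and $g_2 f \homot \id_X$. Then by functoriality of pullback and homotopy invariance, $g_1^* f^* = (fg_1)^* \equiv \id$ on $\fibslice{\mathscr{D}}{Y}$ and $f^* g_2^* = (g_2 f)^* \equiv \id$ on $\fibslice{\mathscr{D}}{X}$. So $\Ho(f^*)$ admits a quasi-inverse in both directions, hence is an equivalence of ordinary categories (using Lemma~\ref{lem:homot-cat-of-cxl-cat}), and thus $f^*$ is a type-theoretic equivalence of CxlCats by Proposition~\ref{prop:characterization-of-equivalences}.

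For $(\Rightarrow)$, first produce a left homotopy-inverse for $f$: apply weak section lifting (Lemma~\ref{lem:cxt-sec-lifting}(2)) of $f^*$ to the diagonal $d = (\id_X, \id_X) : X \to X.p_X^*X$, observing that $X.p_X^*X = f^*(Y.p_Y^*X)$. The lifted section is a section of $Y.p_Y^*X \to Y$, equivalently a map $g : Y \to X$ in $\mathscr{D}$, and the accompanying homotopy $f^*\bar d \homot d$ over $X$ unpacks to $gf \homot \id_X$. By homotopy invariance, $f^* g^* = (gf)^* \equiv \id$; combined with $f^*$ being an equivalence, a straightforward 2-out-of-3-style argument on $\Ho$ (using Proposition~\ref{prop:characterization-of-equivalences}) forces $g^*$ to itself be an equivalence of CxlCats. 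Rerunning the same construction with $g$ and $g^*$ in place of $f$ and $f^*$ produces $h : X \to Y$ with $hg \homot \id_Y$. The standard chain $h \homot h(gf) = (hg)f \homot f$ yields $h \homot f$, whence $fg \homot hg \homot \id_Y$, giving the remaining datum of a structured equivalence.

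The principal obstacle is making the homotopy invariance of pullback precise enough: we need not just that $(gf)^* A \equiv A$ objectwise, but that this assembles into a sufficiently coherent natural equivalence of CxlCat-functors to transfer ``being an equivalence of CxlCats'' from $f^*$ to $g^*$. Passing through $\Ho$ via Proposition~\ref{prop:characterization-of-equivalences} softens the required coherence to that of an ordinary equivalence of categories, so the essential input is still just a single $\Id$-elimination argument carried out uniformly in the type being pulled back.
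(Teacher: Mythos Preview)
Your argument has a genuine hypothesis gap. You route both directions through Proposition~\ref{prop:characterization-of-equivalences} and Lemma~\ref{lem:homot-cat-of-cxl-cat}, but those are available only for contextual categories carrying $\unit$- and $\synSigma$-types in addition to $\Id$: the proof of Proposition~\ref{prop:characterization-of-equivalences} uses $\synSigma$ explicitly to compress a context extension down to a single type, and the description of $\Ho$ in Lemma~\ref{lem:homot-cat-of-cxl-cat} rests on the fibration-category structure of Theorem~\ref{thm:akl}, which again needs all of $\Id,\unit,\synSigma$. Proposition~\ref{prop:map-equiv-iff-pullback-equiv}, however, is stated for a CwA with only $\Id$-types, and that generality is actually used downstream (Section~\ref{sec:spans} fixes a CwA with just $\Id$-types throughout). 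So as written, your proof establishes the result only under strictly stronger hypotheses than claimed.

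The paper's proof avoids $\Ho$ entirely and works directly from the definitions, which is what lets it get by with $\Id$-types alone. For $(\Pmi)$ it verifies weak type and term lifting for $f^*$ by hand: pull back along a chosen two-sided quasi-inverse $g$, then transport along the homotopies $fg \homot \id$ and $gf \homot \id$ --- this is the objectwise content of your homotopy-invariance principle, applied pointwise so that no naturality or passage to $\Ho$ is needed. For $(\Imp)$, after obtaining $g$ and $\eta : gf \homot \id$ via weak section lifting just as you do, the paper takes a shortcut you miss: rather than showing $g^*$ is itself an equivalence and rerunning the whole construction, it observes that the identity context $\Id_{\Delta_2}(fg,\id)$ pulls back along $f$ to something already inhabited (namely by $f$ acting on $\eta$), and one further weak \emph{term} lift produces $\varepsilon : fg \homot \id$ directly. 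Your detour through $g^*$ is valid once one grants $\unit$ and $\synSigma$, but is more roundabout and loses the scholium the paper records afterward: that for pullback functors $f^*$ between fibrant slices, weak term lifting alone already forces weak type lifting.
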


\begin{proof}
  $(\Imp)$:
  Suppose $f$ is an equivalence in the sense of Definition~\ref{def:equiv-in-cwa}; that is, $f^* : \fibslice{\C}{\Gamma.\Delta_2} \to \fibslice{\C}{\Gamma.\Delta_1}$ is an equivalence of contextual categories.
Note that we have $f^*(\Gamma.\Delta_2.p_{\Delta_2}^* \Delta_1) = \Gamma.\Delta_1.p_{\Delta_1}^* \Delta_1$, since the lower composite in the diagram
  \[\begin{tikzcd}
    \Gamma.\Delta_1.p_{\Delta_1}^*\Delta_1 \ar[r] \ar[d] 
       & \Gamma.\Delta_2.p_{\Delta_2}^*\Delta_1 \ar[d] \ar[r]
       & \Gamma.\Delta_1 \ar[d] \\
    \Gamma.\Delta_1 \ar[r, "f"] 
       & \Gamma.\Delta_2 \ar[r]
       & \Gamma
  \end{tikzcd}\] 
  is precisely $p_{\Delta_1}$.
  Thus, by Lemma \ref{lem:cxt-sec-lifting}, we can lift the canonical element $\Gamma,\, y_1 \of \Delta_1 \types y_1 : \Delta_1$ along $f^*$  to get $\Gamma,\, y_2 \of \Delta_2 \types g(y_2) : \Delta_1$ with $\Gamma,\, y_1 \of \Delta_1 \types \eta(y_1) : \Id_{\Delta_1}(gf(y_1), y_1)$.
  This defines $g \colon \Delta_2 \to \Delta_1$ in $\fibslice{\C}{\Gamma}$ and shows that it is a left quasi-inverse of $f$. 

  We claim that $g$ is also a right quasi-inverse of $f$.
  For that, consider the identity context $\Gamma,\, y_2 \of \Delta_2 \types \Id_{\Delta_2} (fg(y_2),y_2)$.
  Pulled back along $f$, it gives $\Gamma,\, y_1 \of \Delta_1 \types \Id_{\Delta_1} (fgf(y_1), f(y_1))$, which is inhabited by the action of $f$ on $\eta(y_1)$.
  Lifting this along $f^*$ gives $\Gamma,\, y_2 \of \Delta_2 \types \varepsilon(y_2): \Id_{\Delta_2} (fg(y_2),y_2)$ and hence taking $g_1 = g_2 = g$ along with $\eta$ and $\varepsilon$ above shows that $f$ is an equivalence in the sense of Definition~\ref{def:equiv-in-cxl-cat}.

  $(\Pmi)$:
  Assuming that $f$ is an equivalence in $\fibslice{\C}{\Gamma}$ in the sense of Definition~\ref{def:equiv-in-cxl-cat}, we may choose a (two-sided) quasi-inverse $g$ for it.
    
  For weak type lifting, suppose we have $\Gamma.\Delta_2.\Phi$ in $\fibslice{\C}{\Gamma.\Delta_2}$ along with $\Gamma.\Delta_2.f^*\Phi.A$ in $\fibslice{\C}{\Gamma.\Delta_1}$. We first pull the latter context back along $g$, forming $\Gamma.\Delta_2.g^*f^*\Phi.g^*A$. Transporting that along the homotopy $fg \homot \id$, we obtain $\Gamma.\Delta_2.\Phi.\overline{A}$. Finally, to show that $f^*\overline{A} \simeq A$ over $f^*\Phi$, we use the other homotopy $gf \homot \id$.
  
  The proof of weak term lifting is analogous. Given $\Gamma.\Delta_2.\Phi.A$ in $\fibslice{\C}{\Gamma.\Delta_2}$ and a section $a : \Gamma.\Delta_1.f^*\Phi \to \Gamma.\Delta_1.f^*\Phi.f^*A$, we pull $a$ back along $g$ and then correct it using $fg \homot \id$ to obtain a section $\overline{a} : \Gamma.\Delta_2.\Phi \to \Gamma.\Delta_2.\Phi.A$. The homotopy between $f^* \overline{a}$ and $a$ is then constructed using $gf \homot \id$.
\end{proof}

(Note that in the argument for $(\Imp)$, we did not use the weak type lifting property along $f^*$, only the weak term lifting; so as a scholium we see that for functors of the form $f^* : \fibslice{\C}{\Gamma.\Delta} \to \fibslice{\C}{\Gamma.\Delta'}$, where $f$ is a map over $\Gamma$, weak term lifting implies weak type lifting.)

\begin{proposition} \label{prop:2-of-6-in-cwas}
  Equivalences in a CwA satisfy 2-out-of-6, and are stable under retracts.
\end{proposition}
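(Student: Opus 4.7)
The plan is to reduce both claims to the corresponding closure properties for equivalences of contextual categories, already recorded in Corollary~\ref{cor:2-of-6-cxl-cats}, using Definition~\ref{def:equiv-in-cwa} together with the fact that the assignment $\Gamma \mapsto \fibslice{\C}{\Gamma}$, $f \mapsto f^*$ is a strictly contravariant functor from $\C$ to $\CxlCat_{\Id,\unit,\synSigma(,\Piext)}$. This strict contravariance is immediate from the strict pullback-functoriality axioms of a CwA, since for composable $f : \Gamma \to \Gamma'$ and $g : \Gamma' \to \Gamma''$ we have $(gf)^* = f^* g^*$ on the nose at the level of types, contexts, and connecting maps.

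For the 2-out-of-6 property, suppose we have a composable triple $\Gamma \to[f] \Gamma_1 \to[g] \Gamma_2 \to[h] \Gamma_3$ in $\C$, with $gf$ and $hg$ equivalences in the sense of Definition~\ref{def:equiv-in-cwa}. Then by contravariance, the composites $(gf)^* = f^* g^*$ and $(hg)^* = g^* h^*$ are equivalences of contextual categories. Applying Corollary~\ref{cor:2-of-6-cxl-cats} to the triple
\[ \fibslice{\C}{\Gamma_3} \to[h^*] \fibslice{\C}{\Gamma_2} \to[g^*] \fibslice{\C}{\Gamma_1} \to[f^*] \fibslice{\C}{\Gamma} \]
in $\CxlCat_{\Id,\unit,\synSigma(,\Piext)}$ yields that $f^*$, $g^*$, $h^*$, and $(hgf)^* = f^* g^* h^*$ are all equivalences of contextual categories. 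By Definition~\ref{def:equiv-in-cwa} this says exactly that $f$, $g$, $h$, and $hgf$ are all equivalences in $\C$.

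For stability under retracts, given a retract diagram in the arrow category witnessing that $f' : \Gamma' \to \Delta'$ is a retract of some equivalence $f : \Gamma \to \Delta$, we apply the contravariant functor $(-)^*$ to obtain a retract diagram of contextual functors exhibiting $(f')^*$ as a retract of $f^*$ (with all horizontal arrows reversed, but the retract structure preserved). Since $f^*$ is an equivalence of contextual categories by hypothesis, and the class of such equivalences is closed under retracts by Corollary~\ref{cor:2-of-6-cxl-cats}, we conclude that $(f')^*$ is an equivalence, hence that $f'$ is an equivalence in $\C$.

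There is essentially no obstacle here: the only subtlety worth double-checking is the strict equality $(gf)^* = f^* g^*$ of contextual functors (not merely a natural isomorphism), but this is guaranteed by the strict functoriality clauses in the definition of a CwA. Both closure properties are then pure transport along the contravariant functor $\Gamma \mapsto \fibslice{\C}{\Gamma}$.
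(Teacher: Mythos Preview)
Your proposal is correct and takes essentially the same approach as the paper: the paper's proof is the single sentence ``By the same properties for equivalences of contextual categories (Corollary~\ref{cor:2-of-6-cxl-cats}),'' and you have simply spelled out the reduction via the contravariant functor $\Gamma \mapsto \fibslice{\C}{\Gamma}$ in detail.
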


\begin{proof}
  By the same properties for equivalences of contextual categories (Corollary~\ref{cor:2-of-6-cxl-cats}).
\end{proof}

\begin{proposition} \label{prop:equiv-stable-under-subst-in-cwas}
  Suppose $\Delta_1, \Delta_2$ are context extensions of $\Gamma$, $w : \Gamma.\Delta_1 \to \Gamma.\Delta_2$ is a map over $\Gamma$, and $f : \Gamma' \to \Gamma$ is any map. If $w$ is an equivalence (in $\C$), then so is $f^*w : f^*\Delta_1 \to f^*\Delta_2$.
\end{proposition}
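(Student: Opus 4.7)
The plan is to reduce the statement to the preservation of structured equivalences by a contextual functor that preserves $\Id$-types, and then appeal to the already-proven bridge between the two notions of equivalence (Proposition~\ref{prop:map-equiv-iff-pullback-equiv}). Since $w : \Gamma.\Delta_1 \to \Gamma.\Delta_2$ is a map between context extensions over $\Gamma$, that proposition says $w$ is an equivalence in the CwA sense of Definition~\ref{def:equiv-in-cwa} if and only if it is an equivalence in the contextual category $\fibslice{\C}{\Gamma}$ in the sense of Definition~\ref{def:equiv-in-cxl-cat}. So I would first unpack the hypothesis to fix witnessing data: quasi-inverses $g_1, g_2 : \Gamma.\Delta_2 \to \Gamma.\Delta_1$ over $\Gamma$, together with homotopies $\eta : w g_1 \homot 1_{\Gamma.\Delta_2}$ and $\varepsilon : g_2 w \homot 1_{\Gamma.\Delta_1}$.

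Next, I would invoke Definition~\ref{def:fibrant-slice-of-cwa}, which furnishes a contextual functor $f^* : \fibslice{\C}{\Gamma} \to \fibslice{\C}{\Gamma'}$ preserving all the logical structure under consideration — in particular identity types. The data making up a structured equivalence consist solely of maps and factorizations through identity contexts (Definitions~\ref{def:homotopy-in-cxl-cat} and~\ref{def:equiv-in-cxl-cat}), so any contextual functor preserving $\Id$-types carries structured equivalences to structured equivalences. Applying $f^*$ to the tuple $(w, g_1, g_2, \eta, \varepsilon)$ therefore yields a structured equivalence $(f^*w, f^*g_1, f^*g_2, f^*\eta, f^*\varepsilon)$ in $\fibslice{\C}{\Gamma'}$, exhibiting $f^*w : \Gamma'.f^*\Delta_1 \to \Gamma'.f^*\Delta_2$ as an equivalence there in the sense of Definition~\ref{def:equiv-in-cxl-cat}. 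One more application of Proposition~\ref{prop:map-equiv-iff-pullback-equiv}, now in the opposite direction, transports this back to the CwA-level statement that $f^*w$ is an equivalence in $\C$.

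There is no genuine obstacle to this argument: each step is immediate from a definition or a prior result. The only subtlety worth flagging is notational: the notion of equivalence \emph{between} contextual categories (Definition~\ref{def:equiv-of-cxl-cats}) is distinct from the notion of equivalence \emph{within} a (contextual) category (Definition~\ref{def:equiv-in-cxl-cat}), and the CwA-level definition (Definition~\ref{def:equiv-in-cwa}) is phrased using the former. Proposition~\ref{prop:map-equiv-iff-pullback-equiv} is precisely the bridge that lets one rephrase it in terms of the latter for maps between context extensions, and it is exactly this reformulation that makes the preservation argument a one-line consequence of the contextual functoriality of $f^*$.
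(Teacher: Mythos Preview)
Your proof is correct and follows essentially the same approach as the paper: pass to the contextual slice via Proposition~\ref{prop:map-equiv-iff-pullback-equiv}, use that $f^* : \fibslice{\C}{\Gamma} \to \fibslice{\C}{\Gamma'}$ is a contextual functor preserving $\Id$-types (hence structured equivalences), and pass back. The paper's version is simply terser, omitting the explicit unpacking of the witnessing data and the remark on the two notions of equivalence.
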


\begin{proof}
  We can view $w$ as a map in $\fibslice{\C}{\Gamma}$; by Proposition~\ref{prop:map-equiv-iff-pullback-equiv}, it is an equivalence there in the contextual sense.
  But $f^* : \fibslice{\C}{\Gamma} \to \fibslice{\C}{\Gamma'}$ is a contextual functor preserving $\Id$-types; so $f^*w$ is an equivalence in $\fibslice{\C}{\Gamma'}$, and hence in $\C$.
\end{proof}

\begin{proposition} \label{prop:right-properness-in-cwas}
  If $f : \Gamma' \to \Gamma$ is an equivalence, and $A \in \Ty(\Gamma)$, then $f.A : \Gamma'.f^*A \to \Gamma.A$ (the pullback of $f$ along $p_A$) is again an equivalence.
\end{proposition}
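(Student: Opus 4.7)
The plan is to deduce the claim from Lemma~\ref{lem:fibslicefunc-equiv} by transporting everything one level further into slices. Unpacking Definition~\ref{def:equiv-in-cwa}, the hypothesis that $f$ is an equivalence says precisely that $f^* : \fibslice{\C}{\Gamma} \to \fibslice{\C}{\Gamma'}$ is an equivalence of contextual categories. Since $A \in \Ty(\Gamma)$ is a degree-$1$ object of the contextual category $\fibslice{\C}{\Gamma}$, applying Lemma~\ref{lem:fibslicefunc-equiv} to $f^*$ at the object $A$ yields that
\[ \fibslicefunc{(f^*)}{A} : \fibslice{(\fibslice{\C}{\Gamma})}{A} \longrightarrow \fibslice{(\fibslice{\C}{\Gamma'})}{f^*A} \]
is itself an equivalence of contextual categories.

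Next, I invoke the ``slice of a slice is a slice'' isomorphisms $\fibslice{(\fibslice{\C}{\Gamma})}{A} \cong \fibslice{\C}{\Gamma.A}$ and $\fibslice{(\fibslice{\C}{\Gamma'})}{f^*A} \cong \fibslice{\C}{\Gamma'.f^*A}$, noted after Definition~\ref{def:fibrant-slice-of-cxl-cat}, to identify source and target with slices of $\C$ taken directly over $\Gamma.A$ and $\Gamma'.f^*A$. Both sides consist on the nose of sequences of types extending the chosen base, so this identification is essentially definitional.

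The crux is then to verify that, under this identification, $\fibslicefunc{(f^*)}{A}$ coincides with the reindexing functor $(f.A)^* : \fibslice{\C}{\Gamma.A} \to \fibslice{\C}{\Gamma'.f^*A}$. A context extension $\Phi = (B_0,\ldots,B_{k-1})$ over $\Gamma.A$ is sent by $\fibslicefunc{(f^*)}{A}$ to the context extension of $f^*A$ in $\fibslice{\C}{\Gamma'}$ obtained by iteratively applying $f^*$, while $(f.A)^*$ pulls $\Phi$ back in $\C$ in a single conceptual step along $f.A$. Their agreement follows by induction on $k$, using strict functoriality of canonical pullbacks in a CwA: at each level the outer pullback rectangle involving $f$, $A$, and $B_i$ can be factored equivalently through either $f.A$ or the connecting maps appearing in the iterated construction, by the pullback-pasting lemma. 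Once this identification is established, $(f.A)^*$ is an equivalence of contextual categories, and Definition~\ref{def:equiv-in-cwa} converts this into the statement that $f.A$ is an equivalence in $\C$.

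The only real obstacle is the bookkeeping in the last step---namely checking that $\fibslicefunc{(f^*)}{A}$ really matches $(f.A)^*$ under the slice-of-a-slice isomorphism---but this is purely a matter of unwinding definitions and applying pullback pasting; no new structural input is needed.
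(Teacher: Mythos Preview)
Your proof is correct and follows essentially the same route as the paper: both unwind Definition~\ref{def:equiv-in-cwa}, apply Lemma~\ref{lem:fibslicefunc-equiv} to the equivalence $f^*$ at the object $A$, and then identify the resulting sliced functor with $(f.A)^*$. The paper compresses this into a single sentence, asserting the equality ``on the nose'' without spelling out the slice-of-a-slice identification or the pullback-pasting induction you give; your version is just a more careful expansion of the same argument.
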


\begin{proof}
  $(f.A)^*$ is equal (on the nose!) to the functor $\fibslicefunc{f^*}{p_A}$, which is an equivalence by Lemma \ref{lem:fibslicefunc-equiv}.
\end{proof}

This can be seen as a form of right properness for equivalences in a CwA.

\subsection{Fibrations and cofibrations of CwAs}

In this section, we define classes of fibrations between CwAs analogously to how they were defined for contextual categories in Section \ref{sec:classes-of-maps}.
As before, we start with the generating sets of left maps in $\CwA_{\Id,\unit,\synSigma(,\Piext)}$, generalizing Definition~\ref{def:free-cxl-cats}.

\begin{definition} \label{def:free-cwas} \leavevmode
\begin{enumerate}
 \item $\freeCwA{\Gamma}$ is freely generated (as a CwA with $\Id$, $\unit$, $\synSigma$, and possibly $\Piext$) by a single object $\Gamma \in \C$.
 
 \item $\freeCwA{\Gamma \types A}$ is freely generated by $\Gamma \in \C$ and $A \in \Ty(\Gamma)$.

  \item $\freeCwA{\Gamma \types a : A}$ is freely generated by $\Gamma$, $A$ as above, and a section $a$ of $p_A : \Gamma.A \to \Gamma$.

  \item $\freeCwA{\Gamma \types A \equiv A'}$ is freely generated by a context $\Gamma$, types $A, A' \in \Ty(\Gamma)$, and maps $f, g_l, g_r, \alpha_l, \alpha_r$ constituting an equivalence $\Gamma.A \equiv \Gamma.A'$ in $\fibslice{\C}{\Gamma}$.

  \item $\freeCwA{\Gamma \types e : \Id_A(a,a')}$ is freely generated by $\Gamma$, $A$ as above, and a section of the iterated projection $\Gamma.A.A.\Id_A \to \Gamma$.
\end{enumerate}
Again, we disambiguate as e.g.\ $\freeCwA[{\Id,\unit,\synSigma}]{\Gamma \types A}$ when necessary; but it is never necessary.
\end{definition}

\begin{definition}\label{def:generating-left-maps-in-cwas}
  Take $I$ and $J$ to be the following sets of maps in $\CwA_{\Id,\unit,\synSigma(,\Piext)}$:
\begin{enumerate}
 \item $I$ consists of the inclusions $\freeCwA{\Gamma} \to \freeCwA{\Gamma \types A}$ and $\freeCwA{\Gamma \types A} \to \freeCwA{\Gamma \types a : A}$;
 \item $J$ consists of the inclusions $\freeCwA{\Gamma \types A} \to \freeCwA{\Gamma \types A \equiv A'}$ and $\freeCwA{\Gamma \types a : A} \to \freeCwA{\Gamma \types p : \Id_A(a,a')}$.
\end{enumerate}
\end{definition} 

Thus the sets $I$ and $J$ contain just two maps each, in contrast with the generating left maps for $\CxlCat_{\Id,\unit,\synSigma(,\Piext)}$, where we required infinitely many maps due to the grading of objects. 

\begin{definition}
  A map $F : \C \to \D$ in $\CwA_{\Id,\unit,\synSigma}$ (resp.\ $\CwA_{\Id,\unit,\synSigma,\Piext}$) is a \defemph{local fibration} (resp.\ \defemph{local trivial fibration}) if it is right-orthogonal to the maps $I$ (resp.~$J$) of Definition~\ref{def:generating-left-maps-in-cwas}.
\end{definition}

Just as in the case of contextual categories (Definition \ref{def:cont-cat-classes}), one may unwind this orthogonality to describe local (trivial) fibrations explicitly in terms of type/term lifting.
Specifically, a map $F : \C \to \D$ of CwA's is a local fibration exactly when
\begin{enumerate}
 \item given any $\Gamma \in \C$, $A \in \Ty_\C\Gamma$, $B \in \Ty_\D (F\Gamma)$, and structured equivalence $w : FA \equiv B$ over $F\Gamma$, there exists a lift $\bar{B} \in \Ty_\C \Gamma$ together with a structured equivalence $\bar{w} : A \equiv \bar{B}$ over $\Gamma$ such that $F \bar{w} = w$;
 \item given any $\Gamma \in \C$, $A \in \Ty_\C\Gamma$, a section $a$ of the projection $p_A$ in $\C$, a section $a'$ of $p_{FA}$ in $\D$, and a section $e$ of $p_{\Id_{FA}(Fa,a')}$, there exist lifts of $a'$, $e$ to $\C$.
\end{enumerate}
and a local trivial fibration just when types and terms lift along it on the nose.

Several useful facts follow immediately from this description:

\begin{proposition} \label{prop:classes-and-core} \leavevmode \
  \begin{enumerate}
  \item A contextual functor is a (trivial) fibration in the sense of Definition~\ref{def:cont-cat-classes} exactly if, viewed as a map of CwA’s, it is a local (trivial) fibration.
  \item A map $F : \C \to \D$ of CwA’s is a local (trivial) fibration exactly if all its slice functors $\fibslicefunc{F}{\Gamma} : \fibslice{\C}{\Gamma} \to \fibslice{\D}{F\Gamma}$ are (trivial) fibrations. \label{item:local-fibs-are-local}
  \item The functors $\core : \CwA_{\Id,\unit,\synSigma(,\Piext)} \to \CxlCat_{\Id,\unit,\synSigma(,\Piext)}$ send local (trivial) fibrations to (trivial) fibrations.
  \item The inclusion functors $\CxlCat_{\Id,\unit,\synSigma(,\Piext)} \to \CwA_{\Id,\unit,\synSigma(,\Piext)}$ perserve the corresponding left classes (by adjunction from the previous statement).  \thmqed
  \end{enumerate}
\end{proposition}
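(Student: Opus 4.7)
The four parts are interdependent, with the main content living in (1) and (2); parts (3) and (4) are then essentially formal consequences.

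For (1), the plan is to unwind both sets of lifting conditions and observe that they coincide literally. A contextual functor $F : \C \to \D$ is a trivial fibration in the sense of Definition~\ref{def:cont-cat-classes} iff for each $n$, each $\Gamma \in \ob_n \C$, and each $A \in \Ty_\D(F\Gamma)$ (resp.\ each term), there is a strict lift; since every object of $\C$ lies in exactly one $\ob_n$, this is the same as asking the condition for all $\Gamma \in \C$, which is precisely the local trivial fibration condition. The same argument handles fibrations, with equivalences/paths replacing types/terms; the only subtlety is that the generating map $\freeCxlCat{\Gamma_n \types A \equiv A'} $ encodes a structured equivalence in the contextual slice $\fibslice{\C}{\Gamma}$, while $\freeCwA{\Gamma \types A \equiv A'}$ encodes a structured equivalence in the fibrant slice as a CwA, but by Proposition~\ref{prop:map-equiv-iff-pullback-equiv} these coincide for maps between context extensions.

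For (2), I would argue both directions by direct translation. For ($\Rightarrow$), given a context extension $\Delta$ of $\Gamma$ and a type/term/equivalence/path over $F\Gamma . F\Delta = F(\Gamma.\Delta)$ in $\D$, the local (trivial) fibration property of $F$ supplies a lift over $\Gamma.\Delta$ in $\C$, which is exactly a lift in the slice $\fibslicefunc{F}{\Gamma}$. For ($\Leftarrow$), conversely, any $\Gamma \in \C$ together with data over $F\Gamma$ in $\D$ may be viewed as data over the zero-length context extension of $\Gamma$ in the slice, where we invoke the (trivial) fibration property of $\fibslicefunc{F}{\Gamma}$; by (1), the relevant lifting conditions on this contextual functor are exactly what we need.

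Part (3) is immediate from (2) by specializing to $\Gamma = 1$: since $\core \C = \fibslice{\C}{1}$ and $\core F = \fibslicefunc{F}{1}$, if $F$ is a local (trivial) fibration then $\core F$ is a (trivial) fibration in $\CxlCat$. Part (4) is then a standard adjunction argument: writing $i \dashv \core$ for the coreflection, given a cofibration (resp.\ anodyne map) $f$ in $\CxlCat$ and a local trivial fibration (resp.\ local fibration) $g$ in $\CwA$, we have $i(f) \orth g \iff f \orth \core(g)$, and the right-hand side holds by (3) together with the definition of cofibration (resp.\ anodyne map) in $\CxlCat$.

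There is no serious obstacle here; the only thing to take care of is the matching of equivalence data in (1), which is precisely what Proposition~\ref{prop:map-equiv-iff-pullback-equiv} was set up to provide.
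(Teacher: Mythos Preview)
Your argument is correct and matches the paper's approach: the paper simply records these as immediate from the explicit unwinding of the lifting conditions given just before the proposition, and your proposal spells out that unwinding in more detail. One small simplification: the ``subtlety'' you flag in (1) is not really there, so the appeal to Proposition~\ref{prop:map-equiv-iff-pullback-equiv} is unnecessary---both $\freeCxlCat{\Gamma_n \types A \equiv A'}$ and $\freeCwA{\Gamma \types A \equiv A'}$ are already defined using the same (contextual, Definition~\ref{def:equiv-in-cxl-cat}) notion of structured equivalence in the fibrant slice $\fibslice{(-)}{\Gamma}$, which is contextual in either case.
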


\begin{remark} \label{rmk:no-local-2-of-3}
 Note that we use the word \emph{local} here in the sense of a property defined slice-wise, rather than in its more common homotopy-theoretic sense of a property defined homset-wise.
 We avoid calling them just “(trivial) fibrations” since they do not behave the way one would expect such classes to behave, as their lifting properties are only for terms and types, not for arbitrary objects.
 In particular, local trivial fibrations do not satisfy “relative 2-out-of-3” among local fibrations.
 To see this, consider some map $F : \C \to \C'$, and another CwA $\D$.
 Then the inclusions $\D \to \C + \D$ and $\D \to \C' + \D$ are local trivial fibrations, and $F + \id_\D : \C + \D \to \C' + \D$ is a local fibration; but $F + \id_\D$ is a local trivial fibration only if $F$ was.
\end{remark}

As mentioned at the end of Section \ref{sec:classes-of-maps}, the definition of a (local) fibration is slightly tedious to check directly, as it involves lifting structured equivalences.
Happily, this can be simplified:

\begin{lemma} \label{lem:cheap-fibrations-aux}
  Let $F : \C \to \D$ be a map of CwA’s with $\Id$-types, satisfying the path-lifting property for local fibrations, i.e.\ orthogonal to $\freeCwA{\Gamma \types a : A} \to \freeCwA{\Gamma \types p : \Id_A(a,a')}$.
  Then the following are equivalent:
  \begin{enumerate}[(a)]
   \item \label{item:struc}
     $f$ satisfies the equivalence-lifting property of a local fibration, i.e.~ is orthogonal to $\freeCwA{\Gamma \types A} \to \freeCwA{\Gamma \types A \equiv A'}$;
   \item \label{item:unstruc-out}
     for any $A \in \Ty_\C(\Gamma)$, $A' \in \Ty_\D(F\Gamma)$, and (unstructured) equivalence $f : F \Gamma. F A \weqto F \Gamma . A'$ over $\Gamma$, there are lifts $\bar{A'} \in \Ty_\C(\Gamma)$ of $A'$ and $\bar{f} : \Gamma.A \weqto \Gamma.\bar{A'}$ of $f$;
   \item \label{item:unstruc-in}
     for any $A \in \Ty_\C(\Gamma)$, $A' \in \Ty_\D(F\Gamma)$, and (unstructured) equivalence $g :  F \Gamma . A' \weqto F \Gamma. F A$ over $\Gamma$, there are lifts $\bar{A'} \in \Ty_\C(\Gamma)$ of $A'$ and $\bar{g} : \Gamma.\bar{A'} \weqto \Gamma.A$ of $g$.
  \end{enumerate} 
\end{lemma}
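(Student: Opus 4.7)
The implications (a) $\Rightarrow$ (b) and (a) $\Rightarrow$ (c) are immediate: any unstructured equivalence can be completed to a structured one (by the definition of equivalence), whereupon (a) delivers a strict lift of the whole structured equivalence, and in particular of its underlying map.

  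For (b) $\Rightarrow$ (a), given a structured equivalence $w = (f, g_l, g_r, \alpha_l, \alpha_r)$ from $FA$ to $A'$ over $F\Gamma$ in $\D$, the plan is first to apply (b) to the $f$-component to obtain strict lifts $\bar A' \in \Ty_\C(\Gamma)$ and $\bar f : \Gamma.A \weqto \Gamma.\bar A'$, and then to bootstrap the remaining structured data via path-lifting. Concretely: choose any structured equivalence data $(g_l^0, g_r^0, \alpha_l^0, \alpha_r^0)$ for $\bar f$ in $\C$ (available since $\bar f$ is an unstructured equivalence). In $\D$, both the $F$-image of this data and the given $w$ are structured data for the same equivalence $f$, so standard zig-zag arguments with quasi-inverses produce homotopies $Fg_l^0 \homot g_l$ and $Fg_r^0 \homot g_r$. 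Encoding each $g_\bullet$ as a section (hence a term) and applying path-lifting, one obtains lifts $\bar g_l, \bar g_r$ in $\C$ with $F\bar g_l = g_l$ and $F\bar g_r = g_r$ strictly, together with witnessing homotopies $g_\bullet^0 \homot \bar g_\bullet$ in $\C$. Transporting $\alpha_l^0, \alpha_r^0$ along those yields $\C$-side candidates for lifts of $\alpha_l, \alpha_r$; a further identity-type manipulation in $\D$ together with one more round of path-lifting then strictifies these candidates to the required $\bar\alpha_l, \bar\alpha_r$.

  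The direction (c) $\Rightarrow$ (a) proceeds symmetrically: since $g_l$ is itself an unstructured equivalence (as a quasi-inverse of the equivalence $f$), (c) applied to $g_l$ provides the starting strict lifts $\bar A'$ and $\bar g_l$, and the remainder of the structured data is strictified exactly as in the (b) case.

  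The main obstacle is the final strictification of the $\alpha$-components. Applying path-lifting at that stage requires a homotopy in $\D$ one level up in the identity-type tower — namely between the $F$-image of the candidate lift and the given $\alpha$. Such a higher homotopy is constructible by $J$-elimination from the structured data already on hand, but organising this 2-dimensional coherence, and checking that the successive adjustments do not disturb what has already been strictified, is the most delicate bookkeeping in the proof.
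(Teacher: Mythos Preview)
Your plan for (b) $\Rightarrow$ (a) is essentially the paper's argument, but you are making the final step harder than it needs to be.  The paper lifts $f$ as you do, picks arbitrary weak-inverse data $(g_l',\eta',g_r',\varepsilon')$ for $\bar f$ in $\C$, and then observes that the whole $4$-tuple $(Fg_l',F\eta',Fg_r',F\varepsilon')$ is propositionally equal to $(g_l,\alpha_l,g_r,\alpha_r)$ because \emph{bi-invertibility data for a fixed map is a mere proposition} (equivalently, $\mathsf{rinv}(f)$ and $\mathsf{linv}(f)$ are contractible when $f$ is an equivalence).  That single equality in the iterated $\Sigma$-type already packages the $2$-cells you are worried about: one then path-lifts the components in order $g_l,\alpha_l,g_r,\alpha_r$, each time using the corresponding component of that path as the ``connecting equality''.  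Your version---produce homotopies $Fg_l^0 \homot g_l$ by an ad hoc zig-zag, transport the $\alpha$'s, and then hunt for the $2$-cells---works, but only because the zig-zag homotopy can be chosen to be the first component of a path in the contractible type $\mathsf{rinv}(f)$; an arbitrary homotopy $Fg_l^0 \homot g_l$ need \emph{not} transport $F\alpha_l^0$ to something homotopic to $\alpha_l$.  So your ``constructible by $J$-elimination'' is right, but the clean way to organise it is to invoke propositionality of $\mathsf{biinv}$ up front rather than at the last moment.

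For (c), you go directly to (a) by first lifting the quasi-inverse $g_l$ via (c) and then bootstrapping $f$ and the rest by path-lifting.  The paper instead proves (c) $\Rightarrow$ (b): given an outgoing equivalence $f$, choose a quasi-inverse $g$, lift $g$ by (c), pick an inverse $f'$ for the lift, and path-lift $f$ from $Ff'$ (the two inverses of $g$ being propositionally equal).  Both routes are valid and of comparable length; the paper's has the minor advantage that it reuses (b) $\Rightarrow$ (a) as a black box and so avoids repeating the four-component strictification.
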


\begin{proof}
  (\ref{item:struc}) $\Imp$ (\ref{item:unstruc-out}), (\ref{item:unstruc-in}) is immediate.
  
  (\ref{item:unstruc-out}) $\Imp$ (\ref{item:struc}): given types $A \in \Ty_\C(\Gamma)$, $A' \in \Ty_\D(F\Gamma)$, and a structured equivalence $(f,g_1,\eta,g_2,\varepsilon)$ from $F A$ to $A'$ over $F \Gamma$, we need to lift the whole structured equivalence, on the nose.
  By (\ref{item:unstruc-out}), $f$ lifts to an equivalence $\bar{f} : A \to \bar{A'}$, for which we may choose weak inverse data $(g_l',\eta',g_2',\varepsilon')$.
  Then $(Fg_l',F\eta',Fg_2',F\varepsilon')$ give alternate weak inverse data for $f$.
  By essential uniqueness of such data, $F g_l'$ is propositionally equal to $g_1$, so by the path-lifting property, we can lift $g_1$ (and the connecting equality) on the nose, and similarly for $\eta$, $g_2$, $\varepsilon$ in turn.

  (\ref{item:unstruc-in}) $\Imp$ (\ref{item:unstruc-out}): suppose $g :  F \Gamma . A' \weqto F \Gamma. F A$ is as in (\ref{item:unstruc-out}).
  Choose some weak inverse $f : F \Gamma . F A \to F \Gamma . A'$ for $g$ over $\Gamma$.
  By (\ref{item:unstruc-in}), lift $A'$, $f$ to some $\bar{A'} \in \Ty_\C(\Gamma)$, $\bar{f} : \Gamma.A \weqto \Gamma.\bar{A'}$.
  Choose some weak inverse $g'$ for $\bar{f}$.
  Now $g$ and $F g'$ are both weak inverses for $f$, so are propositionally equal; so by the path-lifting property, we can lift $g$, as desired.
\end{proof}

\begin{corollary} \label{cor:cheap-fibrations}
  A map $F : \C \to \D$ of CwA’s with $\Id$-types is a local fibration (so, if $\C$, $\D$ are contextual, a fibration) if and only if it satisfies the path-lifting property from the definition, together with any one of the equivalent equivalence-lifting properties of Lemma~\ref{lem:cheap-fibrations-aux}. \thmqed
\end{corollary}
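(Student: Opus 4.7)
The proof will be essentially a direct bookkeeping argument, since all the substantive content has been extracted into Lemma~\ref{lem:cheap-fibrations-aux}. The plan is to unwind the definition of local fibration in terms of right-orthogonality to the generating set $J$ from Definition~\ref{def:generating-left-maps-in-cwas}, observe that this exactly encodes the conjunction of path-lifting and the structured equivalence-lifting property (\ref{item:struc}) of the lemma, and then invoke the lemma to swap in either of the unstructured variants (\ref{item:unstruc-out}) or (\ref{item:unstruc-in}).

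More concretely, I would first write: by definition, $F$ is a local fibration iff it is right-orthogonal to both generating maps in $J$. Orthogonality to the second generator is precisely the path-lifting hypothesis already in the statement of the corollary. Orthogonality to the first generator $\freeCwA{\Gamma \types A} \to \freeCwA{\Gamma \types A \equiv A'}$ is, by the universal property of the codomain as the free CwA on a structured equivalence, exactly condition (\ref{item:struc}) of Lemma~\ref{lem:cheap-fibrations-aux}. Thus being a local fibration is equivalent to having path-lifting plus (\ref{item:struc}); but given path-lifting, Lemma~\ref{lem:cheap-fibrations-aux} makes (\ref{item:struc}), (\ref{item:unstruc-out}), and (\ref{item:unstruc-in}) interchangeable, which gives the desired equivalence.

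The parenthetical claim for contextual $\C$, $\D$ then follows immediately from Proposition~\ref{prop:classes-and-core}(1), which identifies fibrations between contextual categories with local fibrations between them viewed as CwA's.

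There is no real obstacle: the only place one might stumble is in matching up the free-CwA universal properties of $\freeCwA{\Gamma \types A \equiv A'}$ and $\freeCwA{\Gamma \types p : \Id_A(a,a')}$ with the explicit lifting conditions stated in the definitions, but this is routine given Definitions~\ref{def:free-cwas} and~\ref{def:generating-left-maps-in-cwas}. Hence the entire argument reduces to citing Lemma~\ref{lem:cheap-fibrations-aux} together with the definition of local fibration and Proposition~\ref{prop:classes-and-core}(1), and can be dispatched in a single sentence (which is why the authors mark it with \verb|\thmqed|).
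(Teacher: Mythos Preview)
Your proposal is correct and matches the paper's approach: the corollary is marked with \verb|\thmqed| precisely because it follows immediately by unwinding the definition of local fibration as orthogonality to the two maps in $J$, identifying these with path-lifting and condition (\ref{item:struc}) respectively, and then invoking Lemma~\ref{lem:cheap-fibrations-aux}. Your observation that the parenthetical contextual case is handled by Proposition~\ref{prop:classes-and-core}(1) is also exactly right.
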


Finally, we generalise equivalences as well to a local analogue for CwA’s.

\begin{definition}  \label{def:local-eqiuvalence} 
  Say a map $F : \C \to \D$ of CwA’s with $\Id$-types is a \defemph{local equivalence} just if all its slice functors $\fibslicefunc{F}{\Gamma} : \fibslice{\C}{\Gamma} \to \fibslice{\D}{F\Gamma}$ are equivalences of contextual categories.
\end{definition}

\begin{proposition}
  Let $\C \to[F] \D \to[G] \E$ be maps of CwA’s with at least $\Id$-types.
  If $F$ and $G$ are local equivalences, then so is $GF$.
  If $G$ and $GF$ are local equivalences, then so is $F$. 
\end{proposition}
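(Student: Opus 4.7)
The plan is to reduce both claims directly to the corresponding two-out-of-three style properties for equivalences of contextual categories via the slice functors, using the definition of local equivalence (Definition~\ref{def:local-eqiuvalence}).

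First I would observe that the slice functor construction is functorial on CwA maps: given $\C \to[F] \D \to[G] \E$ and $\Gamma \in \C$, we have $\fibslicefunc{(GF)}{\Gamma} = \fibslicefunc{G}{F\Gamma} \circ \fibslicefunc{F}{\Gamma}$ as contextual functors $\fibslice{\C}{\Gamma} \to \fibslice{\E}{GF\Gamma}$. (This strict equality is essentially built into Definition~\ref{def:fibrant-slice-of-cwa}, since both functors act on a context $(A_0,\ldots,A_{n-1})$ over $\Gamma$ by applying $GF$ to each component type.)

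For the first statement, if $F$ and $G$ are local equivalences, then for each $\Gamma \in \C$ both $\fibslicefunc{F}{\Gamma}$ and $\fibslicefunc{G}{F\Gamma}$ are equivalences of contextual categories, hence so is their composite $\fibslicefunc{(GF)}{\Gamma}$. (Composition of equivalences of contextual categories follows from Proposition~\ref{prop:characterization-of-equivalences}, since composites of categorical equivalences are categorical equivalences.) Thus $GF$ is a local equivalence.

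For the second statement, suppose $G$ and $GF$ are local equivalences; fix $\Gamma \in \C$. Then $\fibslicefunc{G}{F\Gamma}$ and $\fibslicefunc{(GF)}{\Gamma} = \fibslicefunc{G}{F\Gamma} \circ \fibslicefunc{F}{\Gamma}$ are equivalences of contextual categories. By the two-out-of-six property for equivalences of contextual categories (Corollary~\ref{cor:2-of-6-cxl-cats}), which in particular implies two-out-of-three, $\fibslicefunc{F}{\Gamma}$ is itself an equivalence. Since this holds for every $\Gamma$, $F$ is a local equivalence. There is no real obstacle here: the content of the proposition is entirely absorbed into the functoriality of slicing and the already-established closure properties of equivalences of contextual categories.
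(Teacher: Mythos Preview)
Your proof is correct and follows essentially the same approach as the paper: both use the strict functoriality $\fibslicefunc{(GF)}{\Gamma} = (\fibslicefunc{G}{F\Gamma})(\fibslicefunc{F}{\Gamma})$ together with 2-out-of-3 for equivalences of contextual categories (Corollary~\ref{cor:2-of-6-cxl-cats}). Your version merely spells out a few details (the source of closure under composition, and the justification of the strict equality of slice functors) that the paper leaves implicit.
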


\begin{proof}
  Immediate by 2-out-of-3 for equivalences of contextual categories (Corollary~\ref{cor:2-of-6-cxl-cats}), together with the fact that $\fibslicefunc{GF}{\Gamma} = (\fibslicefunc{G}{F\Gamma})(\fibslicefunc{F}{\Gamma})$.
\end{proof}

\begin{remark}
  Note however that local equivalences do \emph{not} satisfy the remaining part of the 2-out-of-3 property, as shown again by Remark~\ref{rmk:no-local-2-of-3}.
\end{remark}

\begin{proposition}
  A map in $\CwA_{\Id,\unit,\synSigma(,\Piext)}$ is a local trivial fibration if and only if it is both a local fibration and a local equivalence.
\end{proposition}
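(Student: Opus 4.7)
The plan is to reduce this to the analogous statement for contextual categories, namely Proposition~\ref{prop:tfib-iff-fib-and-weq}, using the slice-functor characterization already established for local (trivial) fibrations and built into the definition of local equivalence.

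Concretely, I would first recall Proposition~\ref{prop:classes-and-core}(\ref{item:local-fibs-are-local}): a map $F : \C \to \D$ of CwA's is a local (trivial) fibration if and only if for every $\Gamma \in \C$, the slice functor $\fibslicefunc{F}{\Gamma} : \fibslice{\C}{\Gamma} \to \fibslice{\D}{F\Gamma}$ is a (trivial) fibration of contextual categories. By Definition~\ref{def:local-eqiuvalence}, $F$ is a local equivalence if and only if every $\fibslicefunc{F}{\Gamma}$ is an equivalence of contextual categories. So the entire statement simply becomes: for each $\Gamma$, the functor $\fibslicefunc{F}{\Gamma}$ is a trivial fibration iff it is both a fibration and an equivalence.

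For the forward direction I would take $F$ a local trivial fibration, observe that each $\fibslicefunc{F}{\Gamma}$ is then a trivial fibration of contextual categories, and apply Proposition~\ref{prop:tfib-iff-fib-and-weq} to conclude that it is both a fibration and an equivalence; the two quantified conclusions then give that $F$ is a local fibration and a local equivalence. For the converse, assuming $F$ is both a local fibration and a local equivalence, each $\fibslicefunc{F}{\Gamma}$ is simultaneously a fibration and an equivalence of contextual categories, so by the same Proposition~\ref{prop:tfib-iff-fib-and-weq} it is a trivial fibration, whence by Proposition~\ref{prop:classes-and-core}(\ref{item:local-fibs-are-local}) $F$ is a local trivial fibration.

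There is no real obstacle here, since all the heavy lifting has already been done: the contextual case (Proposition~\ref{prop:tfib-iff-fib-and-weq}) provides the nontrivial content, and Proposition~\ref{prop:classes-and-core}(\ref{item:local-fibs-are-local}) together with Definition~\ref{def:local-eqiuvalence} align the CwA-level notions with their slicewise contextual counterparts. The only point to be careful about is applying the slice characterization at \emph{every} $\Gamma$ and not just at $\Gamma = 1$, since, as Remark~\ref{rmk:no-local-2-of-3} warns, local properties genuinely depend on all slices and not just the contextual core.
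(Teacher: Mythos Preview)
Your proposal is correct and matches the paper's own proof exactly: the paper simply cites Propositions~\ref{prop:classes-and-core}(\ref{item:local-fibs-are-local}) and~\ref{prop:tfib-iff-fib-and-weq}, which is precisely the slicewise reduction you spell out. Your additional remark about needing all slices (not just $\Gamma = 1$) is a helpful clarification but not required for the argument.
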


\begin{proof}
  Immediate by Propositions~\ref{prop:classes-and-core}(\ref{item:local-fibs-are-local}) and \ref{prop:tfib-iff-fib-and-weq}.
\end{proof}

%%% Local Variables: 
%%% mode: latex
%%% TeX-master: "htott-advances.tex"
%%% End: 
 
\section{The Reedy span-equivalences construction}
\label{sec:spans}

In this section, from a given CwA $\C$ with $\Id$-types, we describe four new CwA’s:

\begin{enumerate}
\item $\Eqv[\C]$, the CwA of \defemph{span-equivalences} in $\C$;
\item $\EqvRefl[\C]$, the CwA of \defemph{trivial auto-(span\nobreakdash\nobreakdash-)equivalences} in $\C$;
\item $\EqvComp[\C]$, the CwA of \defemph{homotopy-commutative triangles of (span\nobreakdash-)equi\-va\-len\-ces} in $\C$.
\item $\EqvInv[\C]$, the CwA of \defemph{mutually inverse pairs of (span\nobreakdash-)equi\-va\-len\-ces} in $\C$.
\end{enumerate}

Each of these is constructed as the CwA of homotopical diagrams in $\C$ on a suitable homotopical inverse category.
Recall that a \defemph{homotopical category} is a category with weak equivalences, satisfying the 2-out-of-6 property.
A homotopical diagram in a CwA $\C$ is therefore a functor from a small homotopical category $(\I, \WEq)$ to $\C$ taking $\WEq$ to the equivalences in $\C$ in the sense of the Definition \ref{def:equiv-in-cwa}.

The general construction of CwA’s of homotopical diagrams on inverse categories, and logical structure on them, will be given in the companion paper \cite{kapulkin-lumsdaine:inverse-diagrams}.
The types in these CwA’s are analogous to \defemph{Reedy fibrations} of diagrams in a fibration category; their construction is thus in large part translating constructions of \cite{shulman:inverse-diagrams} from the language of fibration categories to the language of CwA’s (and more generally comprehension categories).

For each of our four constructions on CwA’s, we therefore set up the appropriate ordered homotopical inverse category on which to take diagrams; give an explicit description of the resulting CwA; and note a few facts about the result.

Precisely, the notions from \cite{kapulkin-lumsdaine:inverse-diagrams} we require are the following:

\begin{definition}[\cite{kapulkin-lumsdaine:inverse-diagrams}]
  An \defemph{ordering} on an inverse category $\I$ consists of, for each $i \in \I$, a finite total ordering $<_i$ on arrows out of $i$ such that for any $i \to[f] j \to[g] k$, we have $gf <_i f$.
\end{definition}

(Orderings will be used for constructing matching objects as context extensions; cf.\ Remark~\ref{rmk:orderings-for-cxts} below.)

\begin{proposition}[{\cite[Thm.~7.1]{kapulkin-lumsdaine:inverse-diagrams}}] \label{prop:deferred-results} \leavevmode
  \begin{enumerate}
  \item For any CwA $\C$ with $\Id$-types, and any ordered homotopical inverse category $(\I,\WEq)$, there is a CwA $\C^\I_h$, whose objects are homotopical $\I$-diagrams in $\C$, and whose types are “homotopical Reedy $\I$-types” in $\C$.

    (The general definition of $\C^\I_h$ is somewhat involved to state; for the cases we use, we will recall the resulting CwA explicitly.)
  
  \item $\C^\I$ carries $\Id$-types; and if $\C$ carries $\unit$- and $\synSigma$-types, so does $\C^\I$.

  \item If $\C$ carries $\Piext$-types, and additionally all maps of $\I$ are equivalences, then $\C^\I$ carries $\Piext$-types.

  \item A CwA map $F : \C \to \D$ induces a CwA map $F^\I : \C^\I \to \D^\I$, preserving whatever logical structure $F$ preserved, and functorially in $F$.

  \item Any order-preserving homotopical discrete opfibration $f : \I \to \J$ induces a map $\C^f : \C^\J \to \C^\I$, preserving all logical structure, and functorially in $f$. \label{item:contravariance}

  \item If $f :\I \to \J$ as above is moreover injective, then $\C^f$ is a local fibration; and if $f$ is a homotopy equivalence, then $\C^f$ is a local equivalence.
  \end{enumerate}
\end{proposition}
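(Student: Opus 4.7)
The plan is to proceed by induction along the ordering of $\I$, building $\C^\I_h$ level by level and reducing each claim to the corresponding property of $\C$ itself. The technical heart is the Reedy-style matching-object construction: for each $i \in \I$, the total ordering on non-identity arrows out of $i$ will let us present the matching object $M_i(\Gamma, A)$ as a canonical iterated context extension of $\Gamma_i$, assembled in the specified order from the already-defined $A_j$ for arrows $i \to j$. A Reedy type over a diagram $\Gamma$ is then a family $(A_i)_{i \in \I}$ with $A_i \in \Ty_\C(\Gamma_i . M_i(\Gamma,A))$ compatible with reindexing; the \emph{homotopical} condition demands that for every weak equivalence $v : i \to j$ in $\I$, the induced transition map on $A$ is an equivalence in $\C$ in the sense of Definition~\ref{def:equiv-in-cwa}.

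With this setup in place, (1) follows by straightforward bookkeeping, and (2) reduces to verifying that pointwise $\Id$-, $\unit$- and $\synSigma$-types over matching objects satisfy the required universal properties and preserve homotopicality, which follows from the compatibility of matching objects with pullback in $\C$. For (3), pointwise $\Piext$-types give $\Pi$ along the matching projection; this is fibrant and preserves homotopicality precisely when every map of $\I$ is a weak equivalence, since then the matching projections are themselves equivalences of suitable slices and Proposition~\ref{prop:equiv-stable-under-subst-in-cwas} applies. Claim (4) is immediate by pointwise application. For (5), the key observation is that an order-preserving homotopical discrete opfibration $f : \I \to \J$ makes $M_i(f^*\Gamma, f^*A)$ agree on the nose with $M_{fi}(\Gamma,A)$, so pullback of diagrams along $f$ strictly preserves all the CwA structure.

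For (6), local fibrancy of $\C^f$ when $f$ is injective will be proved by a further induction over $\J \setminus f(\I)$, linearised via the inverse structure of $\J$: at each new object, the required equivalence- or path-lifting datum is produced directly in $\C$ using Corollary~\ref{cor:cheap-fibrations}, and its compatibility with matching objects is automatic from the recursive construction. The local equivalence claim when $f$ is a homotopy equivalence is the most technical part: for each slice $\fibslice{\C^\J_h}{\Gamma}$, one must exhibit weak type- and term-lifting for $\fibslicefunc{\C^f}{\Gamma}$ by another Reedy-style induction, constructing lifts object-by-object on $\J$ and transporting them along the data witnessing that $f$ is a homotopy equivalence.

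The principal obstacle is this last clause of (6): combining Reedy induction with coherent transport across a homotopy equivalence of inverse categories requires careful bookkeeping of matching objects and iterated propositional equalities, and is essentially the central technical contribution of the companion paper \cite{kapulkin-lumsdaine:inverse-diagrams}, which the present paper treats as a black box.
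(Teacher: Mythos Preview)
The paper does not prove this proposition at all: it is stated with a citation to \cite[Thm.~7.1]{kapulkin-lumsdaine:inverse-diagrams} and is explicitly treated as a black box, as the immediately following remark makes clear (``we originally planned to give them individually in the present paper, before realizing they were sufficiently repetitive that it was better to develop the construction in generality''). So there is no proof in the paper for your attempt to be compared against.

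That said, your sketch is a reasonable outline of the argument one would expect in the companion paper, and is consistent with the paper's remark that the verifications are straightforward but lengthy. Your identification of the Reedy matching-object construction as the technical core, and of the local-equivalence clause of (6) as the hardest part, matches what the authors signal. You also correctly note at the end that the present paper black-boxes this result. One minor point: your explanation of (3) is slightly imprecise --- the issue is not exactly that ``matching projections are themselves equivalences of suitable slices'', but rather that $\Pi$-types are contravariant in their domain, so homotopicality of the resulting Reedy type requires all transition maps (not just marked ones) to be equivalences; this is why the hypothesis is that \emph{all} maps of $\I$ are equivalences. But since the paper gives no proof here, this is a quibble about your sketch rather than a discrepancy with the paper.
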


\begin{remark}
  For the individual instances we require, the proofs of the above facts are all straightforward verifications, albeit rather lengthy and containing much shared material.
  As such, we originally planned to give them individually in the present paper, before realizing they were sufficiently repetitive that it was better to develop the construction in generality.
\end{remark}

For the whole of this section, fix some CwA $\C$ with $\Id$-types.

\subsection{The CwA of Reedy Spans}

\begin{definition}
  $\SpanCat$ is the inverse category $(0) \from (01) \to (1)$ (which may be trivially considered as a homotopical category with no maps marked as equivalences).
  $\EqvCat$ is the homotopical category on $\SpanCat$, with all maps marked as equivalences.
  
  We order these by taking $(0) <_{(01)} (1)$.
  (Here and below, when coslices are posetal, we present the orderings concisely by identifying objects of coslices with their codomains; moreover, we spell out the ordering only on the \emph{proper} part of each coslice, since $\id_i$ must always be the top element of $<_i$.)
\end{definition}

\begin{definition} \label{def:cwa-of-span-equivs}
  $\Span[\C]$ (resp.\ $\Eqv[\C]$) is the CwA of (homotopical) diagrams on $\SpanCat$ (resp.\ $\EqvCat$) in $\C$.
  
  Concretely, $\Span[\C]$ can be described as follows:
  \begin{enumerate}
  \item objects $\GGamma$ are spans $\Gamma_0 \from[l_0] \Gamma_{01} \to[l_1] \Gamma_1$ in $\C$;
  \item maps $\ff : \DDelta \to \GGamma$ are natural transformations between spans
    \[\begin{tikzpicture}[cd-style,x={(1cm,0cm)},y={(0cm,1.5cm)},z={(-3cm,0.8cm)}]
      % target span
      \node (G0) at (0,0,0) {$\Gamma_0$};
      \node (G1) at (2,0,0) {$\Gamma_1$};
      \node (G*) at (1,1,0) {$\Gamma_{01}$};
      \draw[cd-arrow-style] (G*) -- (G0);
      \draw[cd-arrow-style] (G*) -- (G1);
      % source span
      \node (D0) at (0,0,1) {$\Delta_0$};
      \node (D1) at (2,0,1) {$\Delta_1$};
      \node (D*) at (1,1,1) {$\Delta_{01}$};
      \draw[cd-arrow-style] (D*) -- (D0);
      \draw[cd-arrow-style] (D*) -- (D1);
      % map of spans
      \draw[cd-arrow-style] (D0) -- node[below] {$f_0$} (G0);
      \draw[cd-arrow-style] (D1) -- node {$f_1$} (G1);
      \draw[cd-arrow-style] (D*) -- node {$f_{01}$} (G*);
    \end{tikzpicture}\]
  \item types over an object $\GGamma = (\Gamma_0 \from[l_0] \Gamma_{01} \to[l_1] \Gamma_1)$ are triples $\AA = (A_0,A_1,A_{01})$, where $A_0 \in \Ty(\Gamma_0)$, $A_1 \in \Ty(\Gamma_1)$, and $A_{01} \in \Ty(\Gamma_{01}.l_0^*A_0.\pi_{l_0^*A_0}^*l_1^*A_1)$, with the context extension $(\Gamma_0 \from[l_0] \Gamma_{01} \to[l_1] \Gamma_1).(A_0,A_1,A_{01})$ and projection map as given by the following diagram:
    \[\begin{tikzpicture}[cd-style,x={(1.5cm,0cm)},y={(0cm,1.5cm)}]
      % base span
      \node (G0) at (-1.5,0) {$\Gamma_0$};
      \node (G1) at (1.5,0) {$\Gamma_1$};
      \node (G*) at (0,0.8) {$\Gamma_{01}$};
      \draw[cd-arrow-style] (G*) -- (G0);
      \draw[cd-arrow-style] (G*) -- (G1);
      % top span
      \node (GA0) at (-2.1,1) {$\Gamma_0.A_0$};
      \draw[cd-arrow-style,fib] (GA0) -- (G0);
      \node (GlA0) at (-0.9,1.5) {$\Gamma_{01}.l_0^*A_0$};
      \draw[cd-arrow-style,fib] (GlA0) -- (G*);
      \draw[cd-arrow-style] (GlA0) -- (GA0);
      \draw[cd-arrow-style,description,phantom] (GlA0) -- node[sloped,very near start] {\rotatebox{45}{$\ulcorner$}} (G0);
      \node (GA1) at (2.1,1) {$\Gamma_1.A_1$};
      \draw[cd-arrow-style,fib] (GA1) -- (G1);
      \node (GlA1) at (0.9,1.5) {$\Gamma_{01}.l_1^*A_1$};
      \draw[cd-arrow-style,fib] (GlA1) -- (G*);
      \draw[cd-arrow-style] (GlA1) -- (GA1);
      \draw[cd-arrow-style,description,phantom] (GlA1) -- node[sloped,very near start] {\rotatebox{45}{$\lrcorner$}} (G1);
      \node (GlAA) at (0,2.2) {$\Gamma_{01}.l_0^*A_0.\pi_{l_0^*A_0}^*l_1^*A_1$};
      \draw[cd-arrow-style,fib] (GlAA) -- (GlA0);
      \draw[cd-arrow-style] (GlAA) -- (GlA1);
      \draw[cd-arrow-style,description,phantom] (GlAA) -- node[sloped,very near start] {\rotatebox{45}{$\lrcorner$}} (G*);
      \node (GA*) at (0,3.1) {$\Gamma_{01}.l_0^*A_0.\pi_{l_0^*A_0}^*l_1^*A_1.A_{01}$};
      \draw[cd-arrow-style,fib] (GA*) -- (GlAA);
      \draw[cd-arrow-style, bend right=20] (GA*) to (GA0);
      \draw[cd-arrow-style, bend left=20] (GA*) to (GA1);
    \end{tikzpicture}\]
  \item the reindexing of a type $(A_0,A_1,A_{01})$ along a map $(f_0,f_1,f_{01})$ as in the diagram above is taken to be $(f_0^*A_0,\, f_1^*A_1,\, (f_{01}.l_0^*A_0.\pi_{l_0^*A_0}^*l_1^*A_1)^* A_{01})$, with the connecting map $(f_0,f_1,f_{01}).(A_0,A_1,A_{01})$ taken as $(f_0.A_0,\, f_1.A_1,\, (f_{01}.l_0^*A_0.\pi_{l_0^*A_0}^*l_1^*A_1).A_{01})$.
  \end{enumerate}

  Then $\Eqv[\C]$ is the full sub-CwA of $\Span[\C]$ consisting of:
  \begin{enumerate}
  \item as objects, spans such that both legs $l_0$, $l_1$ are equivalences (in the sense of Definition~\ref{def:equiv-in-cwa});
  \item and as types over $\GGamma$, all types $\AA$ as above such that the resulting context extension $\GGamma.\AA$ is again a span-equivalence, or equivalently such that the maps
    \[ \Gamma_{01}.l_0^*A_0.\pi_{l_0^*A_0}^*l_1^*A_1.A_{01} \to \Gamma_{01}.l_i^*A_i \]
    are both equivalences.
  \end{enumerate}

  There are evident forgetful functors $P_0, P_1 : \Span[\C] \to \C$, taking a span to its left and right feet respectively;
  and since the structure on these components is defined pointwise, $P_0$ and $P_1$ are moreover maps of CwA's.
  We write $P_0$, $P_1$ also for the restruction of these CwA maps to $\Eqv[\C]$.
\end{definition}

\begin{remark}
  In more syntactic language, a closed type of $\Span[\C]$ consists of three closed types in $\C$:
  \[ 1 \types A_0 \type \qquad 1 \types A_1 \type \qquad x_0 \of A_0,\, x_1 \of A_1 \types A_{01} \type\]
  
  More generally, a type over a context $\GGamma$ consists of three types of the original model
  \[ \Gamma_0 \types A_0 \type \qquad \Gamma_1 \types A_1 \type \qquad \Gamma_{01},\, x_0 \of l_0^*A_0,\, x_1 \of l_1^*A_1 \types A_{01} \type \]
  and the context extension is the evident span of projections
  \[ \Gamma_0,\, x_0 \of A_0 \ \from\ \Gamma_{01},\, x_0 \of l_0^*A_0,\, x_1 \of l_1^*A_1,\, x_{01} \of A_{01}\ \to\ \Gamma_1,\, x_1 \of A_1. \]

  Such a span is a \defemph{(span\nobreakdash-)equivalence}---so lies in $\Eqv[\C]$---exactly if it additionally satisfies the judgements that the context extensions
  \begin{align*}
    \Gamma_{01},\, x_0 \of l_0^*A_0 & \types (x_1 \of l_1^*A_1,\, x_{01} \of A_{01}) \cxt \\
    \Gamma_{01},\, x_1 \of l_1^*A_1 & \types (x_0 \of l_0^*A_0,\, x_{01} \of A_{01}) \cxt
  \end{align*}
  are both contractible (where contractibility of context extensions is defined in the evident way using their identity contexts).
\end{remark}

\begin{remark}
  A closely related model is studied by Tonelli \cite{tonelli}.
  There, it is given syntactically, as the \defemph{relation} model of type theory.
  Precisely, Tonelli’s model may be seen as the contextual core of the CwA $\Span[\C_\T]$, where $\C_\T$ is the syntactic category of the type theory set out there.
\end{remark}

\begin{remark} \label{rmk:orderings-for-cxts}
  The orderings are used in the definition of types, where the context of $A_{01}$ is taken as $\Gamma_{01}.l_0^*A_0.\pi_{l_0^*A_0}^*l_1^*A_1$: here $0 <_{(01)} 1$ determines that $A_0$ is adjoined before $A_1$.
\end{remark}

\begin{remark}
  It may seem surprising that we use the mere \emph{property} of being an equivalence, rather than equipping the maps involved with data witnessing this.
  
  One certainly could try building a CwA of such structured equivalences (and that would obviate the need to use spans).
  However, the present approach seems to simplify many proofs and constructions, since everything fits into the general framework of homotopical inverse diagrams; for instance, all logical structure on $\Eqv[\C]$ is simply inherited from $\Span[\C]$.

  This approach also ensures that $\Eqv[\C]$ depends just on the class of equivalences in $\C$, not on the specific choice of $\Id$-types.
  This is not needed for the purposes of the present paper, but may (we expect) be useful in other applications.
\end{remark}

\begin{proposition} \label{prop:id-etc-in-eqv} \leavevmode
  \begin{enumerate}
  \item $\Eqv[\C]$ is naturally equipped with $\Id$-types;
  \item if $\C$ additionally carries $\synSigma$- types (resp.\ unit types) then so does $\Eqv[\C]$;
  \item if $\C$ has $\synPi$-types and functional extensionality, then so does $\Eqv[\C]$;
  \item moreover, in all these cases, the maps $P_i : \Eqv[\C] \to \C$ preserve such structure.    
   \end{enumerate}
\end{proposition}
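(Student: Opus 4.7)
The plan is to reduce everything to the general framework for homotopical inverse diagrams from the companion paper, summarised as Proposition~\ref{prop:deferred-results}. Parts (1)--(3) will follow as direct instances, and part (4) from the functoriality clause~(5) applied to endpoint inclusions.

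First I would confirm that $\EqvCat$ as defined is an ordered, finite, homotopical inverse category of the kind required by Proposition~\ref{prop:deferred-results}: it is the inverse category $\SpanCat$, equipped with the ordering $(0) <_{(01)} (1)$ on the coslice at $(01)$, and with every map marked as a weak equivalence. I would also confirm that the explicit description of $\Eqv[\C]$ in Definition~\ref{def:cwa-of-span-equivs} agrees with the general construction of $\C^{\EqvCat}_h$: objects are homotopical diagrams (i.e.\ both legs equivalences), and types are Reedy types preserving that property. Granted this identification, parts~(1) and~(2) are immediate from Proposition~\ref{prop:deferred-results}(2). For part~(3), Proposition~\ref{prop:deferred-results}(3) gives $\Piext$-types on $\C^\I_h$ whenever every map of $\I$ is an equivalence, which is exactly the case for $\EqvCat$.

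For part~(4), I would identify each $P_i : \Eqv[\C] \to \C$ with the restriction functor induced by the inclusion $\iota_i : \ast \to \EqvCat$ picking out the endpoint $(i)$. One then checks that $\iota_i$ is (a) injective; (b) a discrete opfibration, because $(i)$ is a minimal object of the inverse category $\EqvCat$ so the only arrow out of $(i)$ is $\id_{(i)}$, which lifts uniquely to $\ast$; (c) order-preserving, trivially, since the coslice at the unique object of $\ast$ is trivial; and (d) homotopical. By Proposition~\ref{prop:deferred-results}(5), the induced functor $\C^{\iota_i}$—which is $P_i$ under the canonical identification $\C^\ast \iso \C$—preserves all logical structure under consideration.

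The main obstacle is really bookkeeping rather than new mathematics: one must confirm that the general construction $\C^{\EqvCat}_h$ from the companion paper unwinds to exactly the description of $\Eqv[\C]$ given in Definition~\ref{def:cwa-of-span-equivs}, and check the elementary properties of the endpoint inclusions listed above. No genuinely new type-theoretic construction is needed here; the work of building Reedy-style $\Id$-, $\unit$-, $\synSigma$-, and (under the all-equivalences hypothesis) $\Piext$-structures on CwAs of homotopical inverse diagrams has already been carried out in the general framework.
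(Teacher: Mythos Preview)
Your proposal is correct and follows the same approach as the paper, which simply says ``Immediate from Proposition~\ref{prop:deferred-results}.'' You have just unpacked more of the bookkeeping (identifying $\Eqv[\C]$ with $\C^{\EqvCat}_h$, and $P_i$ with the restriction along the endpoint inclusion $\iota_i$, checked to be an order-preserving homotopical discrete opfibration) than the paper bothers to spell out.
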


\begin{proof}
  Immediate from Proposition~\ref{prop:deferred-results}.
\end{proof}

\begin{remark}
  The direct construction of the structure for Proposition~\ref{prop:id-etc-in-eqv} consists roughly of showing that each constructor preserves equivalences of types.
  This is why extensionality is required for the $\synPi$-types.
\end{remark}

\begin{proposition}  \label{prop:eqv-tfib}
  The evident map $\Eqv[\C] \to \C \times \C$ is a local fibration of CwA’s, preserving whatever logical structure is present. 
  Similarly, the maps $P_i : \Eqv[\C] \to \C$ are local trivial fibrations preserving the logical structure.
\end{proposition}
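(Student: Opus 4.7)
The plan is to recognize both maps as instances of the contravariant functoriality in Proposition~\ref{prop:deferred-results}, applied to inclusions of indexing categories, and to then read off the conclusions mechanically. Under the identifications $\C \times \C \iso \C^{(0) \sqcup (1)}_h$ and $\Eqv[\C] = \C^{\EqvCat}_h$, the forgetful map $\Eqv[\C] \to \C \times \C$ is exactly $\C^\iota$ for the evident inclusion $\iota : (0) \sqcup (1) \hookrightarrow \EqvCat$, while each $P_i$ is $\C^{\iota_i}$ for the inclusion $\iota_i : \{(i)\} \hookrightarrow \EqvCat$.

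The core step will be to verify, for $\iota$ and each $\iota_i$, that it is order-preserving, injective, homotopical, and a discrete opfibration of inverse categories. All four conditions hold trivially here: order-preservation and injectivity are immediate; homotopicality is vacuous since neither source category has non-identity maps; and the discrete opfibration condition holds because $(0)$ and $(1)$ are minimal in the inverse order on $\EqvCat$, so their only out-arrows are identities, which lift uniquely. Applying parts (5) and (6) of Proposition~\ref{prop:deferred-results} then yields at once that both maps are local fibrations preserving whatever logical structure $\C$ carries, settling the first assertion and half of the second.

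To upgrade each $P_i$ to a local trivial fibration, I would additionally observe that $\iota_i$ is a homotopy equivalence of homotopical categories: every non-identity morphism of $\EqvCat$ is marked as an equivalence, so the DK-localization of $\EqvCat$ is equivalent to the terminal category, and the inclusion of any object is a DK-equivalence. Proposition~\ref{prop:deferred-results}(6) then gives that $P_i$ is also a local equivalence, and combining this with local-fibration status yields local trivial fibration by the characterization proved at the end of Section~3 (whose argument needs only $\Id$-types, carried by $\Eqv[\C]$ via Proposition~\ref{prop:id-etc-in-eqv}).

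The only real work is the initial conceptual identification of both maps as change-of-indexing functors; once made, the argument reduces to tautological inspection of $\iota$ and $\iota_i$, with Proposition~\ref{prop:deferred-results} doing the heavy lifting. Should a more self-contained argument for $P_0$ be preferred, strict type lifting admits a direct proof: given $B_0 \in \Ty(\Gamma_0)$, apply weak type lifting along the equivalence $l_1 : \Gamma_{01} \to \Gamma_1$ to obtain $A_1 \in \Ty(\Gamma_1)$ together with an equivalence $w : l_1^*A_1 \weqto l_0^*B_0$, and then set $A_{01} := \Id_{l_0^*B_0}(w(x_1), x_0)$; the resulting graph context visibly makes the span into a span-equivalence. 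Strict term lifting is parallel, using weak term lifting in place of weak type lifting.
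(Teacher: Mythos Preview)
Your proof is correct and takes essentially the same approach as the paper's, which likewise derives both claims as immediate applications of Proposition~\ref{prop:deferred-results}, noting that the inclusion of either $(0)$ or $(1)$ into $\EqvCat$ is a homotopy equivalence. One minor caveat: your justification via DK-localization argues DK-equivalence rather than the homotopy equivalence actually required by Proposition~\ref{prop:deferred-results}(6); the latter is easily supplied directly, using that $(01)$ is initial in $\EqvCat$ with all maps marked, so the constant functor at $(01)$ admits natural weak equivalences both to $\id_{\EqvCat}$ and to the constant functor at $(i)$.
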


\begin{proof}
  Again, an immediate application of Proposition~\ref{prop:deferred-results}, noting for the second part that the inclusion of either $(0)$ or $(1)$ into $\EqvCat$ is a homotopy equivalence.
\end{proof}

\subsection{Reflexivity spans}

We would like to use $\Eqv[\C]$ as some kind of path object construction.
Most notions of “path object”, however, include at least a “reflexivity” map $\C \to P\C$ over the diagonal $\C \to \C \times \C$; and unfortunately, $\Eqv[\C]$ does not in general seem to admit such a map.\footnote{We do not know of any obstruction to the existence of such a map; but it seems unlikely to us that such maps exist in general.}
There is an evident functor on underlying categories, sending an object to the constant span on it; and this lifts suitably to a map on the presheaves of types, sending a type to its identity type span.
However, this commutes only laxly with context extension, and does not commute at all with the logical structure; so it does not define a map of CwA’s, let alone structured ones.

In lieu of a reflexivity map, therefore, we instead give reflexivity as a “weak map”; that is, a span whose left leg is a local trivial fibration:
\[ \begin{tikzcd}
  \EqvRefl[\C] \arrow[r] \arrow[d,tfib] & \Eqv[\C] \arrow[d] \\
  \C \arrow[r, fib, "\Delta"] & \C \times \C 
\end{tikzcd} \]

This suffices for the purposes of Section~\ref{sec:put-it-together} below (and for various other applications).

Roughly speaking, a type in $\EqvRefl[\C]$ consists of a type $A_0$ of $\C$ equipped with an auto-(span\nobreakdash-)equivalence $\parpair{A_*}{A_0}$ that is in some sense trivial, i.e.\ homotopic to the identity equivalence.

One’s first thought might be to express triviality of the auto-equivalence by a reflexivity map $r : A_0 \to A_*$ over $\Delta_A$.
However, this does not (it seems) yield a CwA; so once again, we replace this map by a weak map.

Precisely, $\EqvRefl[\C]$ is constructed as another CwA of homotopical inverse diagrams:

\begin{definition}
  $\EqvReflCat$ is the homotopical ordered inverse category
  \[ \begin{tikzcd} C \arrow[r, "p"] & * \arrow[r, shift left, "l_0"] \arrow[r, shift right, "l_1"'] & 0 \end{tikzcd} \qquad l_0 p = l_1 p \]
  with all maps equivalences, and $l_0 <_{*} l_1$.
  We write $lp$ for the common composite $l_0p = l_1p$.
\end{definition}

\begin{definition}
  $\EqvRefl[\C]$ is the CwA of homotopical diagrams on $\EqvReflCat$ in $\C$.
  We call such diagrams \defemph{trivial auto-(span\nobreakdash-)equivalences} in $\C$.
  \[
  \begin{tabular}{c}
    \begin{tikzcd}
      \Gamma_c \ar[dr,"p"] \ar[ddr,"l p"',bend right=15] & \\
      & \Gamma_* \ar[d,"l_0"',bend right=15] \ar[d,"l_1",bend left=15] \\
      & \Gamma_0
    \end{tikzcd}
    \\
    \text{General object}
  \end{tabular}
  \qquad
  \begin{tabular}{c}
    \begin{tikzcd}[column sep=tiny]
      A_c \ar[dr,fib,"{(lp,p)}" description] \ar[drr,"p", bend left=10] \ar[ddr,tfib,"lp"',bend right=15] & & \\
      & \Delta_A^* A_* \ar[r] \ar[d,fib] \arrow[dr, phantom, "\lrcorner", very near start]
      & A_* \ar[d,fib,"{(l_0,l_1)}"] \\ % optional extra: \ar[dr,bend left=15, "l_0"] \ar[dr,bend right=15, "l_1"']
      & A_0 \ar[r,"\Delta_A"] & A_0 \times A_0 % optional extra: \ar[r, bend left=15, "\pi_0"] \ar[r, bend right=15, "\pi_1"'] & A_0 \ar[r,fib] & 1 
    \end{tikzcd}
    \\
    \text{Closed type}
  \end{tabular}
  \]
\end{definition}

\begin{remark}
  In traditional type-theoretic notation, suppose $\GGamma$ is a homotopical diagram on $\EqvReflCat$:
  \[ \begin{tikzcd} 
    \Gamma_c \ar[r,"p"] & \Gamma_*  \ar[r,"l_0", bend left = 15] \ar[r,"l_1"', bend right = 15] & \Gamma_0
  \end{tikzcd} \]

  Then a type over $\GGamma$ in $\EqvRefl[\C]$ consists of types
  \begin{gather*}
    \Gamma_0 \types A_0 \type
    \qquad
    \Gamma_*,\, x_0 \of l_0^* A_0,\,  x_1 \of l_1^* A_0 \types A_*(x_0,x_1) \type
    \\
    \Gamma_c,\, x_0 \of (lp)^* A_0,\, x_* \of p^*A_*(x_0,x_0) \types A_c(x_0,x_*) \type
  \end{gather*}
  such that the following context extensions are contractible:
  \begin{align*}
    \Gamma_*,\, x_0 \of l_0^*A_0 & \types x_1 \of l_1^*A_0,\, x_* \of A_*(x_0,x_1) \cxt
    \\
    \Gamma_c,\, x_0 \of (lp)^* A_0 & \types x_* \of p^*A_*(x_0,x_0),\, x_c \of A_c(x_0,x_*) \cxt.
  \end{align*}

  (Contractibility of these contexts corresponds to $\Gamma.A$ sending $p_0$ and $lp$ to equivalences; this suffices for homotopicality since these maps generate the equivalences of $\EqvReflCat$ under 2-out-of-3.)
\end{remark}

\begin{example}
  Any type $A \in \Ty_\C(\Gamma)$ gives rise to a type in $\EqvRefl[\C]$ over the constant diagram on $\Gamma$:
  \begin{gather*}
    \Gamma \types A \type
    \qquad
    \Gamma,\, x_0,x_1 \of A \types \Id_A(x_0,x_1) \type
    \\
    \Gamma,\, x_0 \of A,\, x_* \of \Id_A(x_0,x_0) \types \Id_{\Id_A(x_0,x_0)}(x_*,\refl(x_0)) \type
  \end{gather*}
\end{example}

\begin{proposition} \label{prop:id-etc-in-eqvrefl} \leavevmode
  \begin{enumerate}
  \item $\EqvRefl[\C]$ carries $\Id$-types;
  \item if $\C$ additionally carries $\synSigma$- types (resp.\ unit types) then so does $\EqvRefl[\C]$;
  \item if $\C$ has $\synPi$-types and functional extensionality, then so does $\EqvRefl[\C]$;
  \item moreover, in all these cases, the natural map $\EqvRefl[\C] \to \Eqv[\C]$ preserves such structure.    
   \end{enumerate}
\end{proposition}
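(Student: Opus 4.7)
My plan is to deduce this proposition from Proposition~\ref{prop:deferred-results}, in exact parallel with the proof of Proposition~\ref{prop:id-etc-in-eqv}. Since $\EqvRefl[\C]$ is by construction the CwA $\C^{\EqvReflCat}_h$ of homotopical diagrams on the ordered homotopical inverse category $\EqvReflCat$, parts (1)--(3) are essentially free. Specifically, part (1) follows from Proposition~\ref{prop:deferred-results}(2), and part (2) likewise gives $\synSigma$- and unit types when present in $\C$. For part (3), which requires more care, I would observe that by construction every map of $\EqvReflCat$ is an equivalence (the class $\WEq$ on $\EqvReflCat$ is defined to contain all morphisms), which is precisely the hypothesis needed to invoke Proposition~\ref{prop:deferred-results}(3) and transfer $\Piext$-types.

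For part (4), the key step is to exhibit the natural map $\EqvRefl[\C] \to \Eqv[\C]$ as arising from a functor $f \colon \EqvCat \to \EqvReflCat$ via Proposition~\ref{prop:deferred-results}(\ref{item:contravariance}). The obvious candidate is the functor sending $(0), (1) \mapsto 0$ and $(01) \mapsto *$, with the two non-identity arrows of $\EqvCat$ sent to $l_0$ and $l_1$ respectively. I would verify three things about $f$: (i) it is a functor (immediate, as $\EqvCat$ is free on its two generators and the relation $l_0 p = l_1 p$ in $\EqvReflCat$ imposes no constraint on maps out of $(01)$); (ii) it is order-preserving (direct from the chosen orderings $\lambda_0 <_{(01)} \lambda_1$ and $l_0 <_{*} l_1$); (iii) it is a discrete opfibration (checked objectwise: at $(0)$ and $(1)$ there are no non-identity arrows out of their images, and at $(01)$ the two non-identity arrows $l_0, l_1$ out of $*$ lift uniquely to $\lambda_0, \lambda_1$ landing at the correct fibers $(0)$ and $(1)$). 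Homotopicality is automatic since all maps on both sides are equivalences. Proposition~\ref{prop:deferred-results}(5) then provides the induced CwA map $\C^f \colon \EqvRefl[\C] \to \Eqv[\C]$, which preserves all logical structure.

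The only genuine thing to check, then, is that $\C^f$ coincides with the map we have in mind, i.e.\ the evident forgetful map that restricts a trivial auto-equivalence diagram $\Gamma_c \to \Gamma_* \rightrightarrows \Gamma_0$ to the span $\Gamma_0 \from \Gamma_* \to \Gamma_0$. This is immediate from the description of $\C^f$ as precomposition with $f$ on underlying diagrams, and the corresponding action on types inherited from the general construction.

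I expect no serious obstacle here; the proof is essentially a bookkeeping exercise in verifying the hypotheses of the black-box Proposition~\ref{prop:deferred-results}. If any step requires attention, it is confirming that the candidate $f$ is a discrete opfibration in the sense used by \cite{kapulkin-lumsdaine:inverse-diagrams}, but this is elementary since both categories are finite and presented explicitly.
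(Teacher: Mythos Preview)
Your proposal is correct and takes essentially the same approach as the paper: both deduce all four parts directly from Proposition~\ref{prop:deferred-results}, with your version simply spelling out in detail the verifications (in particular, that the functor $\EqvCat \to \EqvReflCat$ is an order-preserving homotopical discrete opfibration) that the paper leaves implicit in its one-line ``Again, a direct application of Proposition~\ref{prop:deferred-results}.''
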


\begin{proof}
  Again, a direct application of Proposition~\ref{prop:deferred-results}.
\end{proof}

Finally, we show that $\EqvRefl[\C]$ gives a weak map from $\C$ to $\Eqv[\C]$ as promised.

\begin{proposition}  \label{prop:eqvrefl-tfib}
  The projection map $\ev_0 : \EqvRefl[\C] \to \C$ is a local trivial fibration.
\end{proposition}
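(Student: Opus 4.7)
The strategy is to exhibit $\ev_0$ as an instance of Proposition~\ref{prop:deferred-results}, namely as the contravariant map $\C^\iota : \C^{\EqvReflCat} \to \C^{\{0\}} = \C$ induced by the inclusion $\iota : \{0\} \hookrightarrow \EqvReflCat$ of the degree-zero object as a full homotopical subcategory (trivially ordered). Under the explicit description of $\EqvRefl[\C]$, this induced functor sends a homotopical diagram $\Gamma_c \to \Gamma_* \to \Gamma_0$ to its component $\Gamma_0$, matching $\ev_0$, and acts analogously on types and terms.

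Having made this identification, I would verify the hypotheses of Proposition~\ref{prop:deferred-results} that yield both a local fibration and a local equivalence. Order-preservation of $\iota$ and its injectivity are immediate. The discrete-opfibration condition is vacuous, since $0$ is of minimum degree in the inverse category $\EqvReflCat$ and hence admits no non-identity morphisms out of it, so every arrow with source $\iota(0)$ lifts uniquely as an identity. Finally $\iota$ is a homotopy equivalence of homotopical categories: all morphisms of $\EqvReflCat$ are marked as equivalences, so this reduces to checking that the localization at all arrows is contractible, and here the defining relation $l_0 p = l_1 p$ together with the invertibility of $p$ in the localization forces $l_0 = l_1$, collapsing $c$, $*$, and $0$ to a point. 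Applying Proposition~\ref{prop:deferred-results} then yields that $\ev_0$ is simultaneously a local fibration and a local equivalence; combining these via the proposition characterizing local trivial fibrations (stated at the end of the previous subsection) completes the argument.

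The main delicacy lies in the homotopy-equivalence verification, whose precise meaning depends on the conventions adopted in the companion paper~\cite{kapulkin-lumsdaine:inverse-diagrams} (and similarly for the exact formulation of the discrete-opfibration condition); but both checks should be routine, in the same spirit as the already-invoked arguments for Propositions~\ref{prop:eqv-tfib} and \ref{prop:id-etc-in-eqvrefl}.
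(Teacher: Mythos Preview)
Your proposal is correct and follows essentially the same route as the paper: the paper's proof is the single sentence ``By Proposition~\ref{prop:deferred-results}, since the inclusion of $(0)$ in $\EqvReflCat$ is injective and a homotopy equivalence,'' and you have simply unpacked this, spelling out the discrete-opfibration and homotopy-equivalence checks and the final step of combining ``local fibration $+$ local equivalence $=$ local trivial fibration.''  Your caveat about the precise meaning of ``homotopy equivalence'' is apt (note in particular that there is no single natural transformation between $\id_{\EqvReflCat}$ and the constant functor at $0$, since $l_0 \neq l_1$), but the paper defers this to the companion article just as you do.
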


\begin{proof}
  By Proposition~\ref{prop:deferred-results}, since the inclusion of $(0)$ in $\EqvReflCat$ is injective and a homotopy equivalence. 
\end{proof}

\subsection{Composites of spans}

As with reflexivity, one would hope for a “composition” map on span-equivalences, of the form:
\[ \begin{tikzpicture}[x={(1.5,0.3)},y={(-0.6,1.6)},z={(1,-0.3)}]
\node (C0) at (0,0,0) {$\C$};
\node (C1) at (1,0,0) {$\C$};
\node (C2) at (2,0,0) {$\C$};
\node (C01) at (0.5,1,0) {$\Eqv[\C]$};
\draw[cd-arrow-style] (C01) -- (C0);
\draw[cd-arrow-style] (C01) -- (C1);
\node (C12) at (1.5,1,0) {$\Eqv[\C]$};
\draw[cd-arrow-style] (C12) -- (C1);
\draw[cd-arrow-style] (C12) -- (C2);
\node (C012) at (1,2,0) {$\Eqv[\C] \times_\C \Eqv[\C]$};
\draw[cd-arrow-style] (C012) -- (C01);
\draw[cd-arrow-style] (C012) -- (C12);
\node (C02) at (1,2,2) {$\Eqv[\C]$};
\draw[cd-arrow-style] (C02) -- (C0);
\draw[cd-arrow-style] (C02) -- (C2);
\draw[cd-arrow-style] (C012) -- (C02);
\end{tikzpicture} \]

Again, however, it seems difficult to define such a map in general, so we construct it as a weak map, i.e.\ a left-trivial span over $\C \times \C$:
\[ \begin{tikzcd}
  \EqvComp[\C] \arrow[r] \arrow[d,tfib] & \Eqv[\C] \arrow[d] \\
  \Eqv[\C] \times_\C \Eqv[\C] \ar[r] & \C \times \C 
\end{tikzcd} \]

Roughly, an object of $\EqvComp[\C]$ should consist of a pair of “input” equivalences; an “output” equivalence; and a homotopy from the composite of the input pair to the output pair.
Translated entirely into span-equivalences, this becomes a diagram
\[ \begin{tikzpicture}[x={(1.5,0.3)},y={(-0.6,1.6)},z={(1,-0.3)}]
\node (C0) at (0,0,0) {$\Gamma_0$};
\node (C1) at (1,0,0) {$\Gamma_1$};
\node (C2) at (2,0,0) {$\Gamma_2$};
\node (C01) at (0.5,1,0) {$\Gamma_{01}$};
\draw[cd-arrow-style] (C01) -- (C0);
\draw[cd-arrow-style] (C01) -- (C1);
\node (C12) at (1.5,1,0) {$\Gamma_{12}$};
\draw[cd-arrow-style] (C12) -- (C1);
\draw[cd-arrow-style] (C12) -- (C2);
\node (C02) at (1,1.5,2) {$\Gamma_{02}$};
\draw[cd-arrow-style] (C02) -- (C0);
\draw[cd-arrow-style] (C02) -- (C2);
\node (C012) at (1,2,1) {$\Gamma_{012}$};
\draw[cd-arrow-style] (C012) -- (C01);
\draw[cd-arrow-style] (C012) -- (C12);
\draw[cd-arrow-style] (C012) -- (C02);
\end{tikzpicture} \]
in which all maps are equivalences.
(Think of $\Gamma_{012}$ as a span from $\Gamma_{01} \times_{\Gamma_1} \Gamma_{12}$ to $\Gamma_{02}$ over $\Gamma_0 \times \Gamma_2$, but expressed in a way that doesn’t assume existence of that pullback.)

Flattened out, the domain of the above diagram is a familiar object: the category of faces of the 2-simplex.
Concretely,
\begin{definition}
  $\EqvCompCat$ is the posetal inverse category generated by the graph
  \[ \begin{tikzpicture}[scale=1.6,every node/.style={inner sep=1pt}]
    \path
    (0,0) node (0) {$(0)$}
    to ++(70:1) node (01) {$(01)$}
    to ++(50:1) node (1) {$(1)$}
    to ++(-50:1) node (12) {$(12)$}
    to ++(-70:1) node (2) {$(2)$}
    to ++(190:1) node (02) {$(02)$} ;
    % auxiliary lines for placing centre:
    \path [name path=aux0] (0.center) -- (12.center);
    \path [name path=aux1] (1.center) -- (02.center);
    \path [name intersections={of = aux0 and aux1}];
    \node (012) at (intersection-1) {$(012)$};
    
    \draw[cd-arrow-style] (012) -- (12);
    \draw[cd-arrow-style] (012) -- (02);
    \draw[cd-arrow-style] (012) -- (01);
    \draw[cd-arrow-style] (01) -- (0);
    \draw[cd-arrow-style] (01) -- (1);
    \draw[cd-arrow-style] (02) -- (0);
    \draw[cd-arrow-style] (02) -- (2);
    \draw[cd-arrow-style] (12) -- (1);
    \draw[cd-arrow-style] (12) -- (2);
  \end{tikzpicture} \]
  with all maps weak equivalences, and with objects of each coslice ordered according to their codomains under $(0) <(1) < (2)< (01) < (12) < (02) < (012)$.
\end{definition}

\begin{definition}
  $\EqvComp[\C]$ is the CwA of homotopical diagrams on $\EqvCompCat$ in $\C$.
\end{definition}

\begin{remark}
  In more traditional type-theoretic notation, suppose $\GGamma$ is a homotopical diagram on $\EqvCompCat$, with objects and maps denoted as e.g.\ $l_{0\hat{1}2} : \Gamma_{012} \to \Gamma_{02}$.
  Then a type over $\GGamma$ in $\EqvComp[\C]$ consists of types
  \begin{gather*}
    \Gamma_0 \types A_0 \type
    \qquad
    \Gamma_1 \types A_1 \type
    \qquad
    \Gamma_2 \types A_2 \type
    \\
    \begin{aligned}
    \Gamma_{01},\, x_0 \of l_{0\hat{1}}^*A_0,\,x_1 \of l_{\hat{0}1}^*A_1 & \types A_{01}(x_0,x_1) \type
    \\
    \Gamma_{12},\, x_1 \of l_{1\hat{2}}^*A_1,\,x_2 \of l_{\hat{1}2}^*A_2 & \types A_{12}(x_1,x_2) \type
    \\
    \Gamma_{02},\, x_0 \of l_{0\hat{2}}^*A_0,\,x_2 \of l_{\hat{0}2}^*A_2 & \types A_{02}(x_0,x_2) \type
    \end{aligned}
    \\
    \begin{multlined}[c][0.95\displaywidth]
      \Gamma_{012},\,
      x_0 \of l_{0\hat{1}\hat{2}}^*A_0,\, 
      x_1 \of l_{\hat{0}1\hat{2}}^*A_1,\, 
      x_2 \of l_{\hat{0}\hat{1}2}^*A_2,\,
      x_{01} \of l_{01\hat{2}}^*A_{01}(x_0,x_1),\, 
    \\
      x_{12} \of l_{\hat{0}12}^*A_{12}(x_1,x_2),\,
      x_{02} \of l_{0\hat{1}2}^*A_{02}(x_0,x_2)
      \types A_{012}(x_0,x_1,x_2,x_{01},x_{12},x_{02}) \type
      \mystrut{2.5ex} % vertical spacing otherwise too cramped
    \end{multlined}
  \end{gather*}
  such that the following context extensions are all contractible:
  \begin{gather*}
    \begin{aligned}
    \Gamma_{01},\, x_0 \of l_{0\hat{1}}^* A_0 & \types x_1 \of l_{\hat{0}1}^*A_1,\, x_{01} \of A_{01}(x_0,x_1) \cxt\\
    \Gamma_{01},\, x_1 \of l_{\hat{0}1}^* A_1 & \types x_0 \of l_{0\hat{1}}^*A_0,\, x_{01} \of A_{01}(x_0,x_1) \cxt \\
    \Gamma_{12},\, x_1 \of l_{1\hat{2}}^* A_1 & \types x_2 \of l_{\hat{1}2}^*A_2,\, x_{12} \of A_{12}(x_1,x_2) \cxt \\
    \Gamma_{12},\, x_2 \of l_{\hat{1}2}^* A_2 & \types x_1 \of l_{1\hat{2}}^*A_1,\, x_{12} \of A_{12}(x_1,x_2) \cxt \\
    \Gamma_{02},\, x_0 \of l_{0\hat{2}}^* A_0 & \types x_2 \of l_{\hat{0}2}^*A_2,\, x_{02} \of A_{02}(x_0,x_2) \cxt 
    \end{aligned} \\
    \begin{multlined}[c][0.95\displaywidth]
      \Gamma_{012},\,
      x_0 \of l_{0\hat{1}\hat{2}}^*A_0,\, 
      x_1 \of l_{\hat{0}1\hat{2}}^*A_1,\, 
      x_2 \of l_{\hat{0}\hat{1}2}^*A_2,\,
      x_{01} \of l_{01\hat{2}}^*A_{01}(x_0,x_1),\,
      x_{12} \of l_{\hat{0}12}^*A_{12}(x_1,x_2) \\
      \types
      x_{02} \of l_{0\hat{1}2}^*A_{02}(x_0,x_2),\ 
      x_{012} \of A_{012}(x_0,x_1,x_2,x_{01},x_{12},x_{02}) \cxt
      \mystrut{2.5ex} % vertical spacing otherwise too cramped
    \end{multlined}
  \end{gather*}
(Again, these form a minimal subclass ensuring that all maps in the resulting diagram $\Gamma.A$ are equivalences.)
\end{remark}

\begin{proposition} \label{prop:id-etc-in-eqvcomp} \leavevmode
  \begin{enumerate}
  \item $\EqvComp[\C]$ carries $\Id$-types;
  \item if $\C$ additionally carries $\synSigma$- types (resp.\ unit types) then so does $\EqvComp[\C]$;
  \item if $\C$ has $\synPi$-types and functional extensionality, then so does $\EqvComp[\C]$;
  \item moreover, in all these cases, the evident maps $P_{01}, P_{12}, P_{02} : \EqvComp[\C] \to \Eqv[\C]$ all preserve such structure.    
  \end{enumerate}
\end{proposition}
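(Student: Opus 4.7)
The plan is to apply Proposition~\ref{prop:deferred-results} directly, exactly as in the proofs of Propositions~\ref{prop:id-etc-in-eqv} and \ref{prop:id-etc-in-eqvrefl}. By construction, $\EqvCompCat$ is an ordered homotopical inverse category in which every arrow is marked as a weak equivalence, and $\EqvComp[\C]$ is defined as $\C^{\EqvCompCat}_h$. Parts~(1)--(3) of the proposition therefore follow immediately from parts~(2) and~(3) of Proposition~\ref{prop:deferred-results}, the hypothesis ``all arrows are equivalences'' needed in part~(3) being exactly why we took the free homotopical structure on $\EqvCompCat$.

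For part~(4), I would exhibit the three maps $P_{01}$, $P_{12}$, $P_{02}$ as $\C^{\iota_{01}}$, $\C^{\iota_{12}}$, $\C^{\iota_{02}}$, where $\iota_{ij} : \EqvCat \hookrightarrow \EqvCompCat$ picks out the three 1-faces of the 2-simplex, sending the span $(0)\!\from\!(01)\!\to\!(1)$ to the corresponding face $(i)\!\from\!(ij)\!\to\!(j)$. Part~(5) of Proposition~\ref{prop:deferred-results} then produces $P_{ij}$ and guarantees preservation of all logical structure, provided each $\iota_{ij}$ is order-preserving, homotopical, and a discrete opfibration. Homotopicality is automatic, since every arrow on both sides is an equivalence. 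The discrete opfibration condition is immediate from the definition of $\EqvCompCat$: the coslice at a face vertex $(ij)$ consists exactly of $\id_{(ij)}$ and the two arrows to $(i)$ and $(j)$, which bijects under $\iota_{ij}$ with the coslice of $(01)$ in $\EqvCat$. Order-preservation is a quick case check: in each coslice at $(ij)$ the global order $(0)<(1)<(2)$ restricted to the codomains of the two proper arrows matches the chosen order $(0) <_{(01)} (1)$ of $\EqvCat$ under the bijection induced by $\iota_{ij}$.

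No step here presents a real obstacle; the propositon is designed to be an immediate corollary of Proposition~\ref{prop:deferred-results}, and the only genuine content was fixing the ordering on $\EqvCompCat$ in the preceding definition so that the face inclusions become order-preserving. Accordingly, the written proof can be as short as a single sentence citing Proposition~\ref{prop:deferred-results}, perhaps with a parenthetical remark identifying each $P_{ij}$ with $\C^{\iota_{ij}}$.
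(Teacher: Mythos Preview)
Your proposal is correct and matches the paper's own proof exactly: the paper writes simply ``Once again, a direct application of Proposition~\ref{prop:deferred-results}.'' Your expanded verification that each $\iota_{ij}$ is an order-preserving homotopical discrete opfibration is accurate and fills in precisely the details the paper leaves implicit.
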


\begin{proof}
  Once again, a direct application of Proposition~\ref{prop:deferred-results}.
\end{proof}

Finally, we once again must show that $\EqvComp[\C]$ can be viewed as a weak map as intended.

\begin{proposition} \label{prop:eqvcomp-tfib}
  The map $(P_{01},P_{12}) : \EqvComp[\C] \to \Eqv[\C] \times_\C \Eqv[\C]$ is a local trivial fibration.
\end{proposition}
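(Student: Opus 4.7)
The strategy mirrors the proofs of Propositions~\ref{prop:eqv-tfib} and \ref{prop:eqvrefl-tfib}: I would realize $(P_{01},P_{12})$ as the map $\C^\iota$ induced by a well-behaved inclusion $\iota : \I \hookrightarrow \EqvCompCat$ of homotopical ordered inverse categories, and then invoke Proposition~\ref{prop:deferred-results}(6) together with the characterization of local trivial fibrations (the last proposition of Section~4.3) as local fibrations that are also local equivalences.

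Concretely, I would take $\I$ to be the full subcategory of $\EqvCompCat$ on the objects $\{(0),(1),(2),(01),(12)\}$---informally, the shape of ``two composable spans''---with the ordering and homotopical marking inherited by restriction. A direct unwinding of the definitions identifies $\C^\I_h$ canonically with the pullback $\Eqv[\C] \times_\C \Eqv[\C]$: an $\I$-diagram is exactly a composable pair of span-equivalences, and a Reedy $\I$-type over one decomposes as a pair of independent Reedy types over the two sub-spans $(01)$ and $(12)$. Under this identification, the map $\C^\iota : \EqvComp[\C] \to \C^\I_h$ given by Proposition~\ref{prop:deferred-results}(\ref{item:contravariance}) is exactly $(P_{01},P_{12})$.

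The inclusion $\iota$ is manifestly injective and order-preserving, and is trivially homotopical since all morphisms in both categories are already marked as equivalences. It is also a discrete opfibration: the only objects of $\EqvCompCat$ receiving maps from objects of $\I$ lie again in $\I$, because the two ``extra'' objects $(02)$ and $(012)$ sit at the top of the Reedy ordering and have no arrows into them. So the injectivity half of Proposition~\ref{prop:deferred-results}(6) immediately gives that $\C^\iota$ is a local fibration.

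The step I expect to require the most care is showing $\iota$ is a homotopy equivalence of homotopical categories, which by Proposition~\ref{prop:deferred-results}(6) is what is needed to conclude that $\C^\iota$ is also a local equivalence. For this I would exhibit an explicit retraction $r : \EqvCompCat \to \I$ sending $(02) \mapsto (0)$ (via the equivalence $l_{0\hat{2}}$) and $(012) \mapsto (01)$ (via $l_{01\hat{2}}$), and extended on arrows so that $r\iota = \id_\I$, together with an explicit natural zig-zag of equivalences between $\iota r$ and $\id_{\EqvCompCat}$. Since both categories have every morphism marked as an equivalence and present, after localization, the fundamental groupoid of a contractible simplicial shape, the existence of such a homotopy inverse is expected; the real work is just writing it down and checking naturality. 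Combining the two halves of Proposition~\ref{prop:deferred-results}(6) then gives that $\C^\iota = (P_{01},P_{12})$ is both a local fibration and a local equivalence, and hence a local trivial fibration, as required.
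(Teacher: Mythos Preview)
Your approach is exactly the paper's: identify $\Eqv[\C] \times_\C \Eqv[\C]$ with homotopical diagrams on $\I = \EqvCat +_{1} \EqvCat$, then invoke Proposition~\ref{prop:deferred-results} for the inclusion $\iota : \I \hookrightarrow \EqvCompCat$. The identification of $\C^\I_h$ with the pullback, the discrete-opfibration check, and the injectivity are all correct.

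The gap is in your execution of the homotopy-equivalence step. Your proposed retraction $r$ sending $(012) \mapsto (01)$ is not a functor: $(012)$ has an arrow to $(12)$, but there is no arrow $(01) \to (12)$ in $\I$. Worse, no retraction functor $r : \EqvCompCat \to \I$ with $r\iota = \id_\I$ exists at all, since $(012)$ maps to both $(01)$ and $(12)$ while no object of $\I$ does. The inclusion is nonetheless a homotopy equivalence, but one must argue differently: for instance, take as homotopy inverse the constant functor $g$ at $(1)$; then $\iota g \sim \id_{\EqvCompCat}$ via a two-step zig-zag through $\mathrm{const}_{(012)}$ (using that $(012)$ is initial), while $g\iota \sim \id_\I$ requires a longer zig-zag of natural weak equivalences, collapsing the two wings of $\I$ toward $(1)$ in stages, precisely because $\I$ has no initial object. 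The paper itself asserts the homotopy equivalence without spelling this out, deferring the details to the companion article.
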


\begin{proof}
  By Proposition~\ref{prop:deferred-results}, since $\Eqv[\C] \times_\C \Eqv[\C] \iso \C^{\EqvCat +_{1} \EqvCat}_h$, and the inclusion of $\EqvCat +_{1} \EqvCat$ into $\EqvCompCat$ is a homotopy equivalence.
\end{proof}

\subsection{Inverse span-equivalences}

One might think that symmetry is simpler than reflexivity and transitivity, since the functor $\Eqv[\C] \to \Eqv[\C]$ switching the direction of spans extends to an evident action on types.
However, it is not quite a CwA map---it preserves context extension only up to isomorphism---and may preserve logical structure only up to equivalence.
These problems stem from the fact that the switch automorphism $\EqvCat \to \EqvCat$ does not respect the ordering $<_{(01)}$, so the functoriality of $\C^{(-)}$ (Proposition~\ref{prop:deferred-results}(\ref{item:contravariance})) does not act on this automorphism.

We thus once again use a diagram category, taking more care than usual in the choice of ordering, to obtain a symmetry “weak map” $\Eqv[\C] \to \Eqv[\C]$.

\begin{definition}
  $\EqvInvCat$ is the posetal inverse category
  \[ \begin{tikzcd}[row sep=small]
    & (*) \ar[dl] \ar[dr] \\
    (01) \ar[d] \ar[drr] & & (10) \ar[d] \ar[dll] \\
    (0) & & (1)
  \end{tikzcd} \]
  with all maps equivalences, and with orderings given by
\[ (0) <_{(01)} (1), \qquad \qquad (1) <_{(10)} (0), \qquad \qquad (0) <_{(*)} (1) <_{(*)} (01) <_{(*)} (10). \]

  Write $P_{01} : \EqvCat \to \EqvInvCat$ for the evident inclusion functor, and $P_{10} : \EqvCat \to \EqvInvCat$ for the functor interchanging $(0)$ and $(1)$ and sending $(01)$ to $(10)$.
  The chosen orderings (in particular, $<_{(10)}$) ensure that both $P_{01}$ and $P_{10}$ are order-preserving discrete opfibrations.
\end{definition}

\begin{definition}
  $\EqvInv[\C]$ is the CwA of homotopical diagrams on $\EqvInvCat$ in $\C$.
  Such a diagram can be viewed as a pair of mutually inverse span-equivalences.
\end{definition}

\begin{remark}
  In type-theoretic notation, let $\GGamma$ be a homotopical diagram on $\EqvInvCat$, with objects and maps denoted as e.g.\ $l_{1\hat{0}} : \Gamma_{10} \to \Gamma_{1}$, and $m_{10} : \Gamma_* \to \Gamma_{10}$.

  Then a type over $\GGamma$ in $\EqvInv[\C]$ consists of types
  \begin{gather*}
    \Gamma_0 \types A_0 \type
    \qquad
    \Gamma_1 \types A_1 \type
    \\
    \begin{aligned}
    \Gamma_{01},\, x_0 \of l_{0\hat{1}}^* A_0,\,  x_1 \of l_{\hat{0}1}^* A_1 & \types A_{01}(x_0,x_1) \type
    \\
    \Gamma_{10},\, x_1 \of l_{1\hat{0}}^* A_1,\,  x_0 \of l_{\hat{1}0}^* A_0 & \types A_{10}(x_1,x_0) \type
    \end{aligned}
    \\
    \begin{multlined}[c][0.8\displaywidth]
      \Gamma_{*},\,
      x_0 \of m_{0}^* A_0,\, 
      x_1 \of m_{1}^* A_1,\,
      x_{01} \of m_{01}^*A_{01}(x_0,x_1),\,
      x_{10} \of m_{10}^*A_{10}(x_1,x_0)
      \\
      \types A_{*}(x_0,x_1,x_{01},x_{10}) \type
      \mystrut{2.5ex} % vertical spacing otherwise too cramped
    \end{multlined}
  \end{gather*}
  such that the following context extensions are contractible:
  \begin{gather*}
    \begin{aligned}
    \Gamma_{01},\, x_0 \of l_{0\hat{1}}^* A_0 & \types x_1 \of l_{\hat{0}1}^* A_1,\, x_{01} \of A_{01}(x_0,x_1) \cxt
    \\
    \Gamma_{01},\, x_1 \of l_{\hat{0}1}^* A_1 & \types x_0 \of l_{0\hat{1}}^* A_0,\, x_{01} \of A_{01}(x_0,x_1) \cxt
    \\
    \Gamma_{10},\, x_1 \of l_{1\hat{0}}^* A_1 & \types x_0 \of l_{\hat{1}0}^* A_0,\, x_{10} \of A_{10}(x_1,x_0) \cxt
    \\
    \Gamma_{10},\, x_0 \of l_{\hat{1}0}^* A_0 & \types x_1 \of l_{1\hat{0}}^* A_1,\, x_{10} \of A_{10}(x_1,x_0) \cxt
    \end{aligned}
    \\
    \begin{multlined}[c][0.8\displaywidth]
      \Gamma_{*},\, x_0 \of m_{0}^* A_0,\,  x_1 \of m_{1}^* A_1,\, x_{01} \of m_{01}^*A_{01}(x_0,x_1) 
      \\
      \types x_{10} \of m_{10}^*A_{10}(x_1,x_0),\, x_* \of A_{*}(x_0,x_1,x_{01},x_{10}) \cxt
      \mystrut{2.5ex} % vertical spacing otherwise too cramped
    \end{multlined}
  \end{gather*}
  (Once again, these extensions suffice since their projections generate the equivalences of $\EqvInvCat$ under 2-out-of-3.)
\end{remark}

\begin{proposition} \label{prop:id-etc-in-eqvinv} \leavevmode
  \begin{enumerate}
  \item $\EqvInv[\C]$ carries $\Id$-types;
  \item if $\C$ additionally carries $\synSigma$- types (resp.\ unit types) then so does $\EqvInv[\C]$;
  \item if $\C$ has $\synPi$-types and functional extensionality, then so does $\EqvInv[\C]$;
  \item in all these cases, the maps $P_{01}^*, P_{10}^* : \EqvInv[\C] \to \Eqv[\C]$ preserves the structure;
  \item moreover, $P_{01}^*$ and $P_{10}^*$ are local trivial fibrations.
   \end{enumerate}
\end{proposition}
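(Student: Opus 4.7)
The plan is to proceed exactly as for the analogous Propositions~\ref{prop:id-etc-in-eqv}, \ref{prop:id-etc-in-eqvrefl}, and~\ref{prop:id-etc-in-eqvcomp}. Parts~(1)--(3) are immediate applications of Proposition~\ref{prop:deferred-results}(2)--(3): $\EqvInv[\C]$ is by definition $\C^{\EqvInvCat}_h$, so carries $\Id$-types automatically, inherits $\synSigma$- and unit types from $\C$, and inherits $\Piext$-types since all maps of $\EqvInvCat$ were declared to be equivalences.

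For part~(4), I would invoke Proposition~\ref{prop:deferred-results}(5): it was already noted in the definition that $P_{01}$ and $P_{10}$ are order-preserving discrete opfibrations, and they are trivially homotopical, so each of $P_{01}^*, P_{10}^*$ preserves whatever logical structure $\C$ has.

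Part~(5) is the only point requiring more than a citation. Using the criterion established just before this section --- that a local trivial fibration is the same as a local fibration plus a local equivalence --- it suffices to verify the two halves separately. For local fibration, I note that $P_{01}$ and $P_{10}$ are additionally \emph{injective} order-preserving homotopical discrete opfibrations, so Proposition~\ref{prop:deferred-results}(6) applies. For local equivalence, the same proposition reduces the claim to showing that $P_{01}$ and $P_{10}$ are homotopy equivalences of homotopical categories. Since both $\EqvCat$ and $\EqvInvCat$ are posets with a maximum element (respectively $(01)$ and $(*)$), each is connected and has contractible localisation; any functor between two homotopical categories whose localisations are each equivalent to the terminal groupoid is a homotopy equivalence, so $P_{01}$ and $P_{10}$ qualify.

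I do not anticipate a real obstacle: everything here is routine unpacking of the black-box machinery of Proposition~\ref{prop:deferred-results}. The only place a careless reader might stumble is in part~(5), where one must remember to combine injectivity (for local fibration) with the elementary homotopy-equivalence fact above (for local equivalence) --- hence the need for the careful ordering choices $(0)<_{(01)}(1)$, $(1)<_{(10)}(0)$ that make $P_{01}$ and $P_{10}$ symmetrically into order-preserving discrete opfibrations.
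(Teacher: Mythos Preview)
Your proposal is correct and follows essentially the same route as the paper: parts (1)--(4) are direct citations of Proposition~\ref{prop:deferred-results}, and for part (5) you combine injectivity with the homotopy-equivalence clause of Proposition~\ref{prop:deferred-results}(6). The paper's proof is terser, arguing the homotopy equivalence simply by noting that $\EqvCat$ and $\EqvInvCat$ each have an initial object with all maps equivalences (your ``maximum element'' $(01)$, resp.\ $(*)$, is categorically initial), which is the same contractibility observation you make via localisations.
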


\begin{proof}
  For the most part, a direct application of Proposition~\ref{prop:deferred-results}.
  For the last item, note that $P_{01}, P_{10} : \EqvInvCat \to \EqvCat$ are homotopy equivalences, since $\EqvCat$ and $\EqvInvCat$ both have initial objects and all maps equivalences.
\end{proof}

\begin{remark}
  Astute readers may notice that the final propositions of the subsections above have effectively shown:
  \begin{enumerate}
  \item $\Eqv[\C]$ forms a Reedy span-equivalence from $\C$ to itself;
  \item $\Eqv[\C]$ together with $\EqvRefl[\C]$ forms a trivial auto-equivalence of $\C$;
  \item $\Eqv[\C]$ together with $\EqvComp[\C]$ forms a commuting triangle of equivalences;
  \item $\Eqv[\C]$ together with $\EqvInv[\C]$ form a mutually inverse pair of auto-equivalences.
  \end{enumerate}
  The authors did not notice this until quite late in the preparation of this article.
\end{remark}

%%% Local Variables: 
%%% mode: latex
%%% TeX-master: "htott-advances.tex"
%%% End: 

\section{The left semi model structure on contextual categories}
\label{sec:put-it-together}

We now have all the main ingredients prepared to deduce that the three classes of maps introduced in Section \ref{sec:classes-of-maps} form a left semi-model structure.

In this section, we first bring the span-equivalences construction back to the contextual world, and use it to define homotopy between maps of contextual categories.
We then recall the definition of left semi-model structure, and show with just a little diagram chasing that we have one on our hands.

\subsection{Returning to the contextual world}

The CwA’s $\Eqv$, $\EqvRefl$ and $\EqvComp$ of the previous section will almost never be contextual.
To bring them back to the contextual setting, we take their cores.

Making liberal use of Proposition~\ref{prop:classes-and-core} (that $\core$ sends local (trivial) fibrations to (trivial) fibrations), together with the fact that core is a coreflection (so it preserves limits, and $\core \C \iso \C$ when $\C$ is contextual), we sum up the result:

\begin{proposition}
  For each $\C$ in $\CxlCat_{\Id,\unit,\synSigma(,\Piext)}$, we have diagrams as follows, all in $\CxlCat_{\Id,\unit,\synSigma(,\Piext)}$, and functorial in $\C$:

  \begin{gather*}
    \begin{tikzcd}[ampersand replacement=\&]
      \& \core \Eqv[\C] \ar[dl,tfib] \ar[d,fib] \ar[dr,tfib] \& \\
      \C \& \C \times \C \ar[l,fib] \ar[r,fib] \& \C
    \end{tikzcd}
    \qquad
    \begin{tikzcd}[ampersand replacement=\&]
      \core \EqvRefl[\C] \arrow[r,fib] \arrow[d,tfib] \& \core \Eqv[\C] \arrow[d,fib] \\
      \C \arrow[r,"\Delta"] \& \C \times \C 
    \end{tikzcd}
    \\
    \begin{tikzcd}[ampersand replacement=\&]
      \core \EqvComp[\C] \arrow[r,fib, "P_{02}"] \arrow[d,tfib, "{(P_{01},P_{12})}"'] \& \core \Eqv[\C] \arrow[d,fib] \\
      \core \Eqv[\C] \times_{\core \C} \core \Eqv[\C] \arrow[r,"{(P_0,P_2)}",fib] \& \C \times \C 
    \end{tikzcd}
  \end{gather*}
  \vskip -1.7em \thmqed % TODO: ugh! fix the QED placement properly!
\end{proposition}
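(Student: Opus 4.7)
The plan is to obtain each of the three diagrams by applying the coreflection $\core : \CwA \to \CxlCat$ to a corresponding diagram already assembled at the CwA level in Section~\ref{sec:spans}, then invoking Proposition~\ref{prop:classes-and-core} to transport local (trivial) fibrations to (trivial) fibrations in $\CxlCat_{\Id,\unit,\synSigma(,\Piext)}$. Two ambient facts make the translation essentially automatic: $\core$ is a right adjoint and hence preserves limits, so the products $\C \times \C$ and the pullback $\core \Eqv[\C] \times_{\core \C} \core \Eqv[\C]$ appearing in the diagrams agree with their CwA-level counterparts; and $\core \C \cong \C$ for contextual $\C$, so no adjustment is needed at the $\C$-vertices.

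For the first diagram, Proposition~\ref{prop:eqv-tfib} gives $(P_0,P_1) : \Eqv[\C] \to \C \times \C$ as a local fibration and each $P_i$ as a local trivial fibration, with logical structure preserved by Proposition~\ref{prop:id-etc-in-eqv}; applying $\core$ yields the claimed (trivial) fibrations, while the two lower arrows $\C \times \C \to \C$ are fibrations because products in $\CxlCat$ are built by iterated context extension. For the third diagram, Proposition~\ref{prop:eqvcomp-tfib} provides the left vertical as a local trivial fibration, and $P_{02} : \EqvComp[\C] \to \Eqv[\C]$ is seen to be a local fibration by applying Proposition~\ref{prop:deferred-results}(6) to the injective order-preserving discrete opfibration $\EqvCat \hookrightarrow \EqvCompCat$ sending $(0),(1),(01)$ to $(0),(2),(02)$.

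For the second diagram, Proposition~\ref{prop:eqvrefl-tfib} supplies the left vertical and Proposition~\ref{prop:id-etc-in-eqvrefl}(4) the top map. The main technical point is that this top map $\EqvRefl[\C] \to \Eqv[\C]$ must be shown to be a local fibration, which does not follow directly from Proposition~\ref{prop:deferred-results}(6) because the natural functor $\EqvCat \to \EqvReflCat$ (collapsing $(0),(1)$ to $0$ and sending $(01)$ to $*$) is not injective. The plan is to verify the equivalence- and path-lifting conditions of Corollary~\ref{cor:cheap-fibrations} directly: given a candidate type of $\Eqv[\C]$ equivalent to the image of a type in $\EqvRefl[\C]$, first lift the $A_0$ and $A_*$ components, then produce the remaining datum $A_c$---which by construction sits over a contractible context extension---using $\Id$-types together with the lifts just chosen. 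Functoriality in $\C$ follows throughout from the functoriality of $\Eqv, \EqvRefl, \EqvComp$ (Proposition~\ref{prop:deferred-results}(4)) and of $\core$; aside from the verification just sketched, the rest is routine bookkeeping, with the lifting property for $\EqvRefl[\C] \to \Eqv[\C]$ being the main obstacle.
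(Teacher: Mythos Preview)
Your overall strategy---apply $\core$ to the CwA-level diagrams of Section~\ref{sec:spans} and invoke Proposition~\ref{prop:classes-and-core}---is exactly the paper's own (brief) argument.

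There is, however, a genuine problem with one arrow, and your proposed fix will not work.  You correctly observe that the functor $\EqvCat \to \EqvReflCat$ underlying the map $\EqvRefl[\C] \to \Eqv[\C]$ is not injective, so Proposition~\ref{prop:deferred-results}(6) does not apply.  But the direct verification you sketch cannot succeed, because this map is in fact \emph{not} a local fibration.  Over any context $\GGamma$ in $\EqvRefl[\C]$, its image in $\Eqv[\C]$ is a span with equal feet $\Gamma_0 \from \Gamma_* \to \Gamma_0$; a type over this image in $\Eqv[\C]$ is a triple $(B_0,B_1,B_{01})$ with $B_0, B_1 \in \Ty_\C(\Gamma_0)$ possibly distinct, while the image of any type $(\bar B_0,\bar B_*,\bar B_c)$ from $\EqvRefl[\C]$ is $(\bar B_0,\bar B_0,\bar B_*)$.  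So if $B_0 \neq B_1$---which is perfectly compatible with the existence of a structured equivalence $(A_0,A_0,A_*) \equiv (B_0,B_1,B_{01})$---there is no strict lift.  The same obstruction kills path-lifting.  Your plan to ``first lift the $A_0$ and $A_*$ components'' tacitly assumes there is a single $A_0$-component to lift, which is exactly what fails.

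This seems to be an oversight in the paper's diagram as well: the \texttt{fib} decoration on $\core\EqvRefl[\C] \to \core\Eqv[\C]$ is not justified, and indeed is never used downstream (the only use of this map, in Proposition~\ref{prop:homot-equiv-rel}, is to postcompose with it).  So the honest course is to drop that claim rather than try to prove it.

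A minor point: your reason for the projections $\C \times \C \to \C$ being fibrations (``products in $\CxlCat$ are built by iterated context extension'') is not correct---the product here is of contextual categories, not of contexts.  The right argument is that each projection is a pullback of a map $\C \to 1$, which is a fibration by Proposition~\ref{prop:all-objects-fibrant}, and fibrations are stable under pullback as the right class of a weak factorisation system.
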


For readability, for the remainder of this section, we will omit the “$\core$” and write just $\Eqv[\C]$ and so on, since we have no further need of the CwA versions.

\subsection{The right homotopy relation}

Using $\Eqv$ as a path-object construction, we can define a notion of right homotopy between maps in $\CxlCat_{\Id,\unit,\synSigma(,\Piext)}$, which will be well-behaved under cofibrant domains.

\begin{definition}
  Say $F_0, F_1 : \C \to \D$ in $\CxlCat_{\Id,\unit,\synSigma(,\Piext)}$ are \defemph{right homotopic} ($F_0 \rhomot F_1$) if they factor jointly through $\Eqv[\D]$:
    \[\begin{tikzpicture}[cd-style,x={(1cm,0cm)},y={(0cm,1.5cm)},z={(0.8cm,-0.35cm)}]
      % target span
      \node (DxD) at (3,0,0) {$\D \times \D$};
      \node (EqvD) at (3,1,0) {$\Eqv[\D]\!\!$};
      \draw[cd-arrow-style,fib] (EqvD) -- node {$(P_0,P_1)$}  (DxD);
      % source
      \node (C) at (0,0,0) {$\C$};
      \draw[cd-arrow-style] (C) -- node {$(F_0,F_1)$} (DxD);
      \draw[cd-arrow-style,bend left=15] (C) to node {$H$} (EqvD);
    \end{tikzpicture}\]
\end{definition}

\begin{proposition}  \label{prop:homot-equiv-rel}
  When $\C$ is cofibrant, right homotopy is an equivalence relation on $\CxlCat_{\Id,\unit,\synSigma(,\Piext)}(\C,\D)$.
\end{proposition}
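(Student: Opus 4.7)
The plan is to use each of the three weak structures $\EqvRefl[\D]$, $\EqvInv[\D]$, and $\EqvComp[\D]$ to supply reflexivity, symmetry, and transitivity respectively, lifting along their trivial fibrations using the cofibrancy of $\C$.

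For \emph{reflexivity}, given $F : \C \to \D$, the map $\ev_0 : \EqvRefl[\D] \to \D$ is a trivial fibration, so since $\C$ is cofibrant we may lift $F$ to a map $R : \C \to \EqvRefl[\D]$ with $\ev_0 \circ R = F$. Composing with the map $\EqvRefl[\D] \to \Eqv[\D]$ gives a map $H_F : \C \to \Eqv[\D]$; by construction of $\EqvRefl[\D]$ (which sits over $\D$ via the diagonal $\D \to \D \times \D$) this $H_F$ lies over $(F,F)$, witnessing $F \rhomot F$.

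For \emph{symmetry}, suppose $H : \C \to \Eqv[\D]$ witnesses $F_0 \rhomot F_1$. Since $P_{01}^* : \EqvInv[\D] \to \Eqv[\D]$ is a local trivial fibration (and hence a trivial fibration between contextual categories after taking cores), cofibrancy of $\C$ yields a lift $\widetilde{H} : \C \to \EqvInv[\D]$ with $P_{01}^* \widetilde{H} = H$. Then $P_{10}^* \widetilde{H} : \C \to \Eqv[\D]$ is the desired reversed homotopy, lying over $(F_1,F_0)$ by the construction of $\EqvInv$, which pairs a span-equivalence with its inverse.

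For \emph{transitivity}, suppose $H : \C \to \Eqv[\D]$ witnesses $F_0 \rhomot F_1$ and $H' : \C \to \Eqv[\D]$ witnesses $F_1 \rhomot F_2$. Since they share the value $F_1$ on the common leg, they determine a map $(H, H') : \C \to \Eqv[\D] \times_\D \Eqv[\D]$. The map $(P_{01}, P_{12}) : \EqvComp[\D] \to \Eqv[\D] \times_\D \Eqv[\D]$ is a trivial fibration, so cofibrancy of $\C$ gives a lift $K : \C \to \EqvComp[\D]$. Composing with $P_{02} : \EqvComp[\D] \to \Eqv[\D]$ yields a homotopy $P_{02} \circ K : \C \to \Eqv[\D]$, which lies over $(F_0, F_2)$ and witnesses $F_0 \rhomot F_2$.

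The only real subtlety is checking that each lifted map lies over the correct pair of functors in $\D \times \D$; but this is forced by the commutativity of the defining diagrams of $\EqvRefl[\D]$, $\EqvInv[\D]$, and $\EqvComp[\D]$ over appropriate products of $\D$, together with the uniqueness of the lifts' composites with the structural fibrations. No step presents a genuine obstacle beyond bookkeeping once one has Propositions \ref{prop:eqvrefl-tfib}, \ref{prop:eqvcomp-tfib}, and \ref{prop:id-etc-in-eqvinv}(5) in hand.
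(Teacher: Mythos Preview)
Your proof is correct and follows essentially the same approach as the paper: reflexivity, symmetry, and transitivity are obtained by lifting along the trivial fibrations from $\EqvRefl[\D]$, $\EqvInv[\D]$, and $\EqvComp[\D]$ respectively, using cofibrancy of $\C$. The paper's argument is identical in substance, only slightly terser in presentation.
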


\begin{proof}
  Reflexivity: by Proposition \ref{prop:eqvrefl-tfib} and cofibrancy of $\C$, any map $F : \C \to \D$ lifts to a map $\C \to \EqvRefl[\D]$; composing this with the forgetful map $\EqvRefl[\D] \to \Eqv[\D]$ yields a reflexivity homotopy for $F$:
  \[\begin{tikzpicture}[cd-style,x={(1.2cm,0cm)},y={(0cm,1.5cm)},z={(0.8cm,-0.35cm)}]
    % source
    \node (C) at (0,0,0) {$\C$};
    % midpoint
    \node (D) at (2,0,0) {$\D$};
    \node (reflD) at (2,1,0) {$\ \EqvRefl[\D]\!$};
    \draw[cd-arrow-style,tfib] (reflD) to (D); 
    % first-stage maps
    \draw[cd-arrow-style] (C) -- node {$F$} (D);
    \draw[cd-arrow-style,dashed] (C) -- (reflD);
    % target span
    \node (DxD) at (4,0,0) {$\D \times \D$};
    \node (EqvD) at (4,1,0) {$\Eqv[\D]\!\!$};
    \draw[cd-arrow-style,fib] (EqvD) -- node {$(P_0,P_1)$}  (DxD);
    % second stage maps
    \draw[cd-arrow-style] (D) -- node {$\Delta_D$} (DxD);
    \draw[cd-arrow-style] (reflD) to (EqvD);
  \end{tikzpicture}\]
  
  Symmetry is similar.
  If $H : \C \to \Eqv[\D]$ witnesses $F_0 \rhomot F_1$, then by Proposition~\ref{prop:id-etc-in-eqvinv} and cofibrancy of $\C$, $H$ lifts along $P_{01}^*$ to a map $\C \to \EqvInv[\D]$, whose composition with $P_{10}^*$ gives a homotopy $F_1 \rhomot F_0$.
  \[\begin{tikzpicture}[cd-style,x={(1.2cm,0cm)},y={(0cm,1.5cm)},z={(0.8cm,-0.35cm)}]
    % source
    \node (C) at (0,0,0) {$\C$};
    % midpoint
    \node (EqvD) at (2,0,0) {$\Eqv[\D]$};
    \node (EqvInvD) at (2,1,0) {$\ \EqvInv[\D]\!$};
    \draw[cd-arrow-style,tfib] (EqvInvD) to node {$P_{01}^*$} (EqvD); 
    % first-stage maps
    \draw[cd-arrow-style] (C) -- node {$H$} (EqvD);
    \draw[cd-arrow-style,dashed] (C) -- (reflD);
    % target span
    \node (DxD) at (4,0,0) {$\D \times \D$};
    \node (EqvD') at (4,1,0) {$\Eqv[\D]\!\!$};
    \draw[cd-arrow-style,fib] (EqvD') -- node {$(P_0,P_1)$}  (DxD);
    % second stage maps
    \draw[cd-arrow-style] (EqvD) -- node {$(P_1,P_0)$} (DxD);
    \draw[cd-arrow-style] (EqvInvD) -- node {$P_{10}^*$} (EqvD');
  \end{tikzpicture}\]
  
  Transitivity is similar again.
  Given $F_0, F_1, F_2 : \C \to \D$, and homotopies $H_{01}$, $H_{02}$, we have an induced map $(H_{01},H_{02}) : \C \to \Eqv[\D] \times_\D \Eqv[\D]$.
  By Proposition \ref{prop:eqvcomp-tfib} and cofibrancy of $\C$, we can lift this to a map $\C \to \EqvComp[\D]$; composing this with $P_{02} : \EqvComp[\D] \to \Eqv[\D]$ gives a homotopy $F_0 \rhomot F_2$.
\end{proof}

\begin{proposition}
  Right homotopy is stable under pre- and post-composition.
\end{proposition}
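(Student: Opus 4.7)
The plan is to reduce the claim to the naturality of the path-object-like construction $\Eqv$. Suppose $H : \C \to \Eqv[\D]$ witnesses $F_0 \rhomot F_1$, so $P_0 H = F_0$ and $P_1 H = F_1$.

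For stability under pre-composition, given any $G : \C' \to \C$ in $\CxlCat_{\Id,\unit,\synSigma(,\Piext)}$, the composite $H G : \C' \to \Eqv[\D]$ satisfies $P_i \circ (HG) = (P_i H) G = F_i G$, so $HG$ witnesses $F_0 G \rhomot F_1 G$. This step uses nothing beyond the definition.

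For stability under post-composition, given $G : \D \to \E$, I would appeal to the functoriality of $\Eqv$ on CwA maps (Proposition~\ref{prop:deferred-results}(4)), together with the fact that $\core$ is functorial, so that $G$ induces a map $\Eqv[G] : \Eqv[\D] \to \Eqv[\E]$ in $\CxlCat_{\Id,\unit,\synSigma(,\Piext)}$ commuting with the projections: $P_i \circ \Eqv[G] = G \circ P_i$. Then $\Eqv[G] \circ H : \C \to \Eqv[\E]$ is the desired homotopy, since $P_i \circ (\Eqv[G] \circ H) = G \circ P_i \circ H = G F_i$.

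There is essentially no obstacle here; the only thing to observe is that we can indeed apply $\Eqv[-]$ to $G$, which requires $G$ to preserve $\Id$-types (and the other relevant structure), and that the induced map commutes strictly with the projections $P_0, P_1$ — both of which are immediate from Proposition~\ref{prop:deferred-results}(4) applied to the inclusions of $(0), (1)$ into $\EqvCat$, followed by passage to contextual cores.
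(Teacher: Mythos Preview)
Your proof is correct and matches the paper's approach exactly: pre-composition by taking $HG$, post-composition by applying the functorial $\Eqv[-]$ to the map and composing. The paper's proof is a two-line version of what you wrote; your extra remarks about why $\Eqv[G]$ commutes with the projections (a minor point: this naturality really uses both parts (4) and (5) of Proposition~\ref{prop:deferred-results}, not (4) alone) are left implicit there.
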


\begin{proof}
  Let $H : \C \to \Eqv[\D]$ be a homotopy $F_0 \rhomot F_1$.
  Then for any $G : \C' \to \C$, $HG$ is a homotopy $F_0G \rhomot F_1G$;
  and similarly, for any $K : \D \to \D'$, $\Eqv[K]H$ is a homotopy $KF_0 \rhomot KF_1$.
\end{proof}

\begin{proposition} \label{prop:homot-to-equiv-is-equiv}
  Any map right homotopic to an equivalence is an equivalence.
\end{proposition}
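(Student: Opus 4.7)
The plan is to exploit the factorization of the homotopy through the projections $P_0, P_1 : \Eqv[\D] \to \D$, both of which are (the contextual-core versions of) trivial fibrations and hence equivalences. Concretely, suppose $F_0 \rhomot F_1$ via a homotopy $H : \C \to \Eqv[\D]$, so that $F_i = P_i \circ H$, and assume $F_0$ is an equivalence.

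First, I would recall that by Proposition~\ref{prop:eqv-tfib}, transferred to the contextual setting via $\core$ as summarized at the start of this section, the projections $P_0, P_1 : \Eqv[\D] \to \D$ are trivial fibrations; hence by Proposition~\ref{prop:tfib-iff-fib-and-weq} they are equivalences. Next, from the factorization $F_0 = P_0 \circ H$ with both $F_0$ and $P_0$ equivalences, I would invoke 2-out-of-3 for equivalences to conclude that $H$ is itself an equivalence. Finally, $F_1 = P_1 \circ H$ is a composite of two equivalences, hence an equivalence.

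The only small subtlety is that 2-out-of-3 is not stated verbatim in the text: what we have is the 2-out-of-6 property (Corollary~\ref{cor:2-of-6-cxl-cats}). This is immediate though: by Proposition~\ref{prop:characterization-of-equivalences}, equivalences of contextual categories are precisely the preimages of equivalences of categories under $\Ho$, and the latter class manifestly satisfies 2-out-of-3 and is closed under composition. So the argument is purely formal, with no real obstacle beyond assembling the pieces in the right order.
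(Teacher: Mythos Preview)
Your proposal is correct and follows essentially the same approach as the paper: both arguments use that the projections $P_i : \Eqv[\D] \to \D$ are equivalences (via Proposition~\ref{prop:eqv-tfib}) and then apply 2-out-of-3 to the factorizations $F_i = P_i H$. Your additional remarks on deriving 2-out-of-3 from 2-out-of-6 and on invoking $\core$ are just spelling out details the paper leaves implicit.
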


\begin{proof}
  By Proposition~\ref{prop:eqv-tfib}, the maps $P_i : \Eqv[\D] \to \D$ are equivalences.
  So by 2-out-of-3, if $H : \C \to \Eqv[\D]$ is a homotopy $F_0 \rhomot F_1$, we have that $H$ is an equivalence if and only if each/either $F_i = P_i H$ is one.
\end{proof}

\subsection{Putting it all together: the semi model structure}

Finally, we show that the classes of maps on $\CxlCat_{\Id,\unit,\synSigma(,\Piext)}$ fit together to form a \emph{left semi-model structure}.\footnote{We do not know whether it also forms a full model structure; we have no specific obstruction or counterexample.}

Roughly, this means three classes of maps as in a model structure, except that the $(\Cof \cap \WEq,\Fib)$ factorization system only works for maps with cofibrant domains.

\begin{definition}[cf.\ {\cite[Def.\ 1(I)]{spitzweck:operads-algebras-modules}}]
  A \defemph{left semi-model structure} on a bicomplete category $\catE$ consists of three classes of maps: $\WEq$, $\Fib$, $\Cof$, subject to the axioms:
 \begin{enumerate}
  \item all three classes are closed under retracts; $\WEq$ satisfies the 2-out-of-3 property; and fibrations and trivial fibrations are preserved under pullback;
  \item cofibrations have the left lefting property with respect to trivial fibrations; and trivial cofibrations with cofibrant source have the left lifting property with respect to fibrations;
  \item every map can be functorially factored into a cofibration followed by a trivial fibration; every map with cofibrant source can also be functorially factored into a trivial cofibration followed by a fibration.
 \end{enumerate}
\end{definition}

(Left semi-model categories first appeared in Hovey \cite[Thm.\ 3.3]{hovey:monoidal-model}, and were further developed by Spitzweck \cite[Def.\ 1]{spitzweck:operads-algebras-modules} and Barwick \cite[Def.\ 1.4]{barwick:left-and-right-model-cats}.)

In practice, one usually has just a little more structure:

\begin{lemma} \label{lem:semi-model-strux-from-wfss}
  Suppose $\catE$ is a bicomplete category, equipped with
  \begin{enumerate}
  \item a class of maps $\WEq$, including all identities, and closed under 2-out-of-6 and retracts;
  \item two weak factorization systems $(\Anod, \Fib)$ and $(\Cof, \TFib)$;
  \item such that $\TFib = \Fib \cap \WEq$, and
  \item when $A \in \catE$ is cofibrant (i.e.~the map $0 \to A$ is in $\Cof$), a map $i : A \to B$ is in $\Anod$ if and only if it is in $\Cof \cap \WEq$.
  \end{enumerate}

  Then the classes $(\WEq,\Cof,\Fib)$ form a left semi-model structure on $\catE$. \thmqed
\end{lemma}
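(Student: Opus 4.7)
The plan is to check each clause of the definition of left semi-model structure directly against one of the four hypotheses. There is no serious obstacle; the only mildly non-tautological step is deriving 2-out-of-3 for $\WEq$ from 2-out-of-6, which uses that identities are weak equivalences.

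For axiom (1): retract-closure of $\WEq$ is hypothesised; retract-closure of $\Cof$, $\Fib$, $\TFib$, and $\Anod$ follows from their all being the left or right class of a weak factorization system. Pullback-stability of $\Fib$ and $\TFib$ is likewise automatic as right classes of wfs's. For 2-out-of-3 on $\WEq$, given composable $f, g$, I would establish each direction by inserting an identity to form a triple to which 2-out-of-6 applies: e.g.\ for $f, g \in \WEq \Rightarrow gf \in \WEq$, apply 2-out-of-6 to $A \to[f] B \to[\id_B] B \to[g] C$, whose middle two composites $\id_B \cdot f = f$ and $g \cdot \id_B = g$ are weak equivalences, and read off $gf \in \WEq$ from the conclusion. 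The other two instances of 2-out-of-3 follow by symmetrically inserting $\id_A$ on the left or $\id_C$ on the right.

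For axiom (2): cofibrations have LLP against trivial fibrations by definition of the $(\Cof,\TFib)$ weak factorization system. For a trivial cofibration $i : A \to B$ with $A$ cofibrant, hypothesis (4) places $i$ in $\Anod$, which by definition lifts against $\Fib$.

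For axiom (3): the $(\Cof,\TFib)$-factorization supplies the required cofibration / trivial-fibration factorization of an arbitrary map, using $\TFib = \Fib \cap \WEq$ to identify $\TFib$ with the class of trivial fibrations. Given a map $f : A \to B$ with $A$ cofibrant, the $(\Anod,\Fib)$-factorization $f = p \circ i$ has $i$ anodyne; since its source $A$ is cofibrant, hypothesis (4) rewrites $i$ as a trivial cofibration. Functoriality of both factorizations is inherited from the ambient weak factorization systems, which in our intended application are produced by the small object argument and hence functorial; the one point to take care of is ensuring throughout that ``cofibrant'' is interpreted uniformly as $0 \to A \in \Cof$, so that hypothesis (4) may indeed be invoked on the left factor in the second factorization above.
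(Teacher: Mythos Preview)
Your verification is correct. The paper does not give a proof of this lemma at all: the statement ends with a \texttt{\textbackslash thmqed}, treating it as a routine fact left to the reader. So there is no ``paper's own proof'' to compare against, and your direct check of the axioms is exactly what is intended.

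One small remark: the lemma as stated in the paper asks only for weak factorization systems, not functorial ones, yet the definition of left semi-model structure requires functorial factorizations. You noticed this and patched it by appealing to the small object argument in the intended application; that is the right diagnosis, and the gap is in the lemma's hypotheses rather than in your argument.
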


For the remainder of this section, we fix the ambient category as either $\CxlCat_{\Id,\unit,\synSigma}$ or $\CxlCat_{\Id,\unit,\synSigma,\Piext}$ (the two cases are exactly parallel), and work to establish the hypotheses of Lemma~\ref{lem:semi-model-strux-from-wfss}.

\begin{proposition} \label{prop:anod-iff-cof-and-weq}
  Let $\C$ be cofibrant in $\CxlCat_{\Id,\unit,\synSigma(,\Piext)}$.
  Then a map $F : \C \to \D$ in $\CxlCat_{\Id,\unit,\synSigma(,\Piext)}$ is anodyne precisely if it is both a weak equivalence and a cofibration.
\end{proposition}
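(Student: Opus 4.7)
The plan is to prove both implications separately, with the substantive direction being $\Anod \Rightarrow \Cof \cap \WEq$. The inclusion $\Anod \subseteq \Cof$ requires no cofibrancy assumption: it suffices to check that each generating map in $J$ lifts against trivial fibrations. A trivial fibration admits strict lifts of types and terms, hence also of maps-over-a-context (sections of pulled-back type projections) and of homotopies (terms of identity types); lifting the constituent data of a structured equivalence or a propositional equality component-by-component then provides the required lift, and closure of $\Cof$ under pushouts, transfinite compositions, and retracts gives $\Anod \subseteq \Cof$.

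For $\Anod \Rightarrow \WEq$ assuming $\C$ cofibrant, fix anodyne $F : \C \to \D$. Since every object is fibrant (Proposition~\ref{prop:all-objects-fibrant}), the map $\C \to 1$ is a fibration, so $F \orthog (\C \to 1)$ yields a contextual functor $g : \D \to \C$ with $gF = \id_\C$ strictly. It then suffices to build a natural isomorphism $Fg \cong \id_\D$ in $\Ho\D$; combined with $gF = \id_\C$, this makes $\Ho F$ an equivalence of categories, so $F \in \WEq$ by Proposition~\ref{prop:characterization-of-equivalences}. To produce the required homotopy, lift $F$ along the trivial fibration $\EqvRefl[\D] \to \D$ (using cofibrancy of $\C$) and postcompose with $\EqvRefl[\D] \to \Eqv[\D]$ to obtain $R : \C \to \Eqv[\D]$ with $(P_0, P_1) R = (F, F)$. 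The square
\[ \begin{tikzcd}
  \C \arrow[r, "R"] \arrow[d, "F"'] & \Eqv[\D] \arrow[d, "{(P_0, P_1)}"] \\
  \D \arrow[r, "{(Fg,\, \id_\D)}"'] \arrow[ur, dashed, "H"] & \D \times \D
\end{tikzcd} \]
commutes, since $(Fg,\id_\D) F = (FgF, F) = (F, F)$; its right edge is a fibration (Proposition~\ref{prop:eqv-tfib}), so anodyneness of $F$ supplies a lift $H : \D \to \Eqv[\D]$ with $(P_0, P_1) H = (Fg, \id_\D)$. Pointwise, each $H(\Gamma)$ is a span-equivalence $Fg(\Gamma) \leftarrow H(\Gamma)_{01} \to \Gamma$ in $\D$, and the main obstacle will be to verify that functoriality of $H$ (as a contextual functor) assembles these pointwise equivalences into a \emph{natural} isomorphism $Fg \cong \id_\D$ in $\Ho\D$ after inverting the span legs: morphisms in $\Eqv[\D]$ are morphisms of spans, so one has to check that inverting both legs in $\Ho\D$ is compatible with the naturality squares induced by $H$ on each $\phi : \Gamma \to \Gamma'$.

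For the converse $\Cof \cap \WEq \Rightarrow \Anod$, factor $F = p\,j$ via the $(\Anod, \Fib)$ weak factorization system. The middle object is cofibrant (since $\C$ is and $j \in \Anod \subseteq \Cof$), so by the forward direction $j \in \WEq$; 2-out-of-3 (Corollary~\ref{cor:2-of-6-cxl-cats}) then gives $p \in \WEq$, hence $p \in \Fib \cap \WEq = \TFib$ by Proposition~\ref{prop:tfib-iff-fib-and-weq}. Now $F \orthog p$ supplies a diagonal exhibiting $F$ as a retract of the anodyne map $j$, so $F \in \Anod$ since $\Anod$ is closed under retracts.
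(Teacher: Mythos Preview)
Your proposal is correct, but the three parts relate to the paper's argument rather differently.

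For $\Anod \subseteq \Cof$, you verify directly that each generating map in $J$ lies in $\Cof$; the paper instead observes that $\TFib \subseteq \Fib$ (an instance of Proposition~\ref{prop:tfib-iff-fib-and-weq}), and takes the dual inclusion $\Anod = \lorth{\Fib} \subseteq \lorth{\TFib} = \Cof$. Both are fine.

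For $\Anod \Rightarrow \WEq$, your construction of $H$ is exactly what the paper does, but you then take an unnecessary detour. The map $H : \D \to \Eqv[\D]$ with $(P_0,P_1)H = (Fg,\id_\D)$ is \emph{by definition} a right homotopy $Fg \rhomot \id_\D$ in the sense used throughout this section; Proposition~\ref{prop:homot-to-equiv-is-equiv} then gives immediately that $Fg$ is an equivalence, and 2-out-of-6 applied to $F,g,F$ (with $gF = \id_\C$ and $Fg$ both equivalences) finishes. There is no need to unpack $H$ pointwise and check naturality in $\Ho\D$ --- the ``main obstacle'' you flag simply does not arise. (Your naturality argument would go through, since morphisms of $\Eqv[\D]$ are morphisms of spans, but it is redundant.)

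For $\Cof \cap \WEq \Rightarrow \Anod$, you use the standard retract argument: factor $F = pj$ in $(\Anod,\Fib)$, deduce $p \in \TFib$, and lift to exhibit $F$ as a retract of $j$. The paper instead gives a direct lifting argument: given a fibration $p : \Y \to \B$ and $h : \A \to \Y$ over $j$, it uses a reflexivity homotopy $c_h$ and the pullback $\Eqv[(h,1_\Y)]$ to produce a trivial fibration over $\B$ through which the lifting problem factors, then solves it using $j \in \Cof$. Your retract argument is shorter and entirely standard; the paper's direct argument is more explicit about how the span-equivalence machinery actually produces the lift, and avoids re-invoking the forward implication.
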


\begin{proof}
  $\Anod \subseteq \Cof$: this does not require the cofibrant domain assumption.
  We noted above that $\TFib \subseteq \Fib$, so dually, $\Anod \subseteq \Cof$. \\

  $\Anod \subseteq \WEq$: suppose $j : \A \to \B$ is anodyne, with $\A$ cofibrant.
  By fibrancy of $\A$ (Proposition~\ref{prop:all-objects-fibrant}), we can take a retraction $r$ for $j$:
  \[\begin{tikzcd}
    \A \ar[r, "1_A"] \ar[d, "j"'] &
    \A \ar[d,fib] \\
    \B \ar[r] \ar[ur, dashed, "r"] &
    1
  \end{tikzcd}\]

  But then $r$ is also a homotopy section for $j$, by filling the square
  \[\begin{tikzcd}
    \A \ar[r, "c_j"] \ar[d, "j"'] &
    \Eqv[\B] \ar[d,fib] \\
    \B \ar[r, "{(1_B,jr)}"] \ar[ur, dashed, "H"] &
    \B \times \B
  \end{tikzcd}\]
  where $c_j$ is a reflexivity homotopy on $j$, supplied by Proposition~\ref{prop:homot-equiv-rel} since $\A$ is cofibrant.

  So $rj = 1_A$, and $jr \rhomot 1_B$; so by 2-out-of-6 and Proposition~\ref{prop:homot-to-equiv-is-equiv}, $j$  is an equivalence. \\

  $\WEq \cap \Cof \subseteq \Anod$: suppose $j : \A \to \B$ is in $\WEq$ and $\Cof$, with $\A$ cofibrant.

  We want to show that $j$ is orthogonal to all fibrations.
  It is enough to show this for fibrations over $\B$, since any other lifting problem can first be pulled back to $\B$.
  So assume $p : \Y \to \B$ is some fibration, with a map $h : \A \to \Y$ over $\B$; we want to fill the square
  \[\begin{tikzcd}
    \A \ar[r, "h"] \ar[d, "j"'] &
    \Y \ar[d, fib, "p"] \\
    \B \ar[r, "1_B"] &
    \B.
  \end{tikzcd}\]

  Proposition \ref{prop:homot-equiv-rel} gives a reflexivity homotopy $c_h : \A \to \Eqv[\Y]$ for $h$.
  Write $\Eqv[(h,1_\Y)]$ for the pullback $(h,1_\Y)^* \Eqv[\Y]$, with projection maps $(Q_0,Q_1) : \Eqv[(h,1_\Y)] \to \A \times \Y$.
  Then $c_h$ factors through $\Eqv[(h,1_\Y)]$ by a map $c'_h$:
  \[\begin{tikzcd}[column sep=large]
    & \Eqv[(h,1_\Y)] \ar[d,fib, "{(Q_0,Q_1)}" description] \ar[r] \arrow[dr, phantom, "\lrcorner", very near start]
    & \Eqv[\Y] \ar[d,fib, "{(P_0,P_1)}" description]
    \\ 
    \A \ar[ur, "c'_h"] \ar[r, "{(1_\A,h)}"] \ar[dr, "1_A"']
    & \A \times \Y \ar[r, "{(h,1_\Y)}"] \ar[d,fib, "\pi_0"] \arrow[dr, phantom, "\lrcorner", very near start]
    & \Y \times \Y \ar[d,fib, "\pi_0"]
    \\
    & \A \ar[r, "h"]
    & \Y
  \end{tikzcd}\]

  Now $Q_0 : \Eqv[(h,1_\Y)] \to \A$ is a pullback of $P_0 : \Eqv[\Y] \to \Y$; so by Proposition \ref{prop:eqv-tfib}, it is a trivial fibration.
  So by 2-of-3, $c'_h$ is an equivalence, since $Q_0 c'_h = 1_\A$,
  and by 2-of-3 again, so is $pQ_1$, since $pQ_1c'_h = ph = j$.

  But $pQ_1$ is also a fibration (as a composite of two fibrations); so by Proposition~\ref{prop:tfib-iff-fib-and-weq}, it is a trivial fibration.
  So (since $j$ is a cofibration) we can extend $c'_h$ along $j$, filling the left-hand square below; composing the resulting filler with $Q_1$ then solves the original lifting problem.
  \[
  \begin{tikzcd}
    \A \ar[r, "c'_h"'] \ar[d, "j"'] \ar[rr, bend left = 20, "h"]
    & \Eqv[(h,1_\Y)] \ar[r, "Q_1"'] \ar[d,tfib, "pQ_1"]
    & \Y \ar[d, fib, "p"]
    \\ \B \ar[r, "1_\B"] \ar[ur, dashed]
    & \B \ar[r, "1_\B"]
    & \B  
  \end{tikzcd}
  \]
\end{proof}

This completes the main result:

\begin{theorem}
  On each of $\CxlCat_{\Id,\unit,\synSigma}$ and $\CxlCat_{\Id,\unit,\synSigma,\Piext}$, the classes $\WEq$, $\Fib$, $\Cof$ of Section \ref{sec:classes-of-maps} form a left semi-model structure.
\end{theorem}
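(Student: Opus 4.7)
The plan is to deduce this directly from Lemma~\ref{lem:semi-model-strux-from-wfss}, since all four of its hypotheses have already been independently established over the course of Sections~\ref{sec:classes-of-maps}--\ref{sec:put-it-together}. So the task reduces to collecting references and checking that they fit.

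First I would verify hypothesis (1): $\WEq$ contains identities because every identity functor is trivially an equivalence (its induced map on homotopy categories is the identity), and closure under 2-out-of-6 and retracts is exactly Corollary~\ref{cor:2-of-6-cxl-cats}. Hypothesis (2) is the existence of the two weak factorization systems $(\Anod,\Fib)$ and $(\Cof,\TFib)$, which was noted immediately after Definition~\ref{def:cont-cat-classes} via the small object argument (both $\CxlCat_{\Id,\unit,\synSigma}$ and $\CxlCat_{\Id,\unit,\synSigma,\Piext}$ are locally presentable as categories of models of essentially algebraic theories, so the small object argument applies to the sets $I$, $J$ of Definition~\ref{def:generating-left-maps}). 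Hypothesis (3), that $\TFib = \Fib \cap \WEq$, is exactly Proposition~\ref{prop:tfib-iff-fib-and-weq}.

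Hypothesis (4) — that an arrow out of a cofibrant object is anodyne if and only if it is a cofibration and a weak equivalence — is the substantive part, and is precisely Proposition~\ref{prop:anod-iff-cof-and-weq}. This is the only ingredient that draws on the full apparatus of Section~\ref{sec:spans} (via Propositions~\ref{prop:homot-equiv-rel}, \ref{prop:homot-to-equiv-is-equiv}, and \ref{prop:eqv-tfib}), and is thus where all the real work has been done; the present theorem is just the assembly.

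Having verified (1)--(4), I would simply invoke Lemma~\ref{lem:semi-model-strux-from-wfss} to conclude that $(\WEq,\Cof,\Fib)$ defines a left semi-model structure on the ambient category, and observe that the argument applies verbatim in both $\CxlCat_{\Id,\unit,\synSigma}$ and $\CxlCat_{\Id,\unit,\synSigma,\Piext}$ since every statement cited above was proved uniformly for the two cases (the notation $\CxlCat_{\Id,\unit,\synSigma(,\Piext)}$ throughout). The proof is therefore essentially a one-line citation of Lemma~\ref{lem:semi-model-strux-from-wfss} together with the four references; no real obstacle remains at this stage.
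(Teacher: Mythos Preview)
Your proposal is correct and follows exactly the paper's approach: the paper's proof is the one-liner ``Propositions~\ref{prop:tfib-iff-fib-and-weq} and \ref{prop:anod-iff-cof-and-weq} supply the hypotheses of Proposition~\ref{lem:semi-model-strux-from-wfss}.'' Your version is simply a more explicit unpacking of the same citation, additionally spelling out where hypotheses (1) and (2) come from (which the paper leaves implicit).
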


\begin{proof}
  Propositions~\ref{prop:tfib-iff-fib-and-weq} and \ref{prop:anod-iff-cof-and-weq} supply the hypotheses of Proposition~\ref{lem:semi-model-strux-from-wfss}.
\end{proof}

\begin{corollary}
  The cofibrant objects of $\CxlCat_{\Id,\unit,\synSigma(,\Piext)}$ form a cofibration category. \thmqed
\end{corollary}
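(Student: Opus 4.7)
The plan is to verify, axiom by axiom, the dual of Definition~\ref{def:fibration-category} for the full subcategory $\catC \subseteq \CxlCat_{\Id,\unit,\synSigma(,\Piext)}$ of cofibrant objects, with cofibrations and weak equivalences inherited from the semi-model structure just established.

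The key structural observation is that $\catC$ is closed under the pushouts we need. If $i : A \to B$ is a cofibration with $A, B$ cofibrant, and $A \to A'$ is any map into $\catC$, then the pushout $B'$ taken in the ambient $\CxlCat_{\Id,\unit,\synSigma(,\Piext)}$ is again cofibrant: pushouts of cofibrations along arbitrary maps are cofibrations (as the left class of the wfs $(\Cof,\TFib)$), so the composite $\freeCxlCat{\emptyset} \to A' \to B'$ is a cofibration. This simultaneously supplies the pushouts required for the cofibration-category axioms and shows that cofibrations are stable under them.

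The other axioms are then immediate. First, the $2$-out-of-$6$ property for $\WEq$ is Corollary~\ref{cor:2-of-6-cxl-cats}. Second, every isomorphism lies in $\Cof$ (isomorphisms are retracts of anything, and $\Cof$ is closed under retracts) and in $\WEq$. Third, the free object $\freeCxlCat{\emptyset}$ is an initial object of $\catC$, and all objects of $\catC$ are cofibrant by definition. Finally, any $(\Cof,\TFib)$-factorization of a map $f : A \to B$ from the semi-model structure factors $f$ as a cofibration followed by a weak equivalence (since $\TFib \subseteq \WEq$), and the intermediate object is cofibrant whenever $A$ is.

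The one axiom that requires any real argument is the stability of \emph{trivial} cofibrations under pushout; this is where the restriction to cofibrant objects becomes essential, and will be the main obstacle. By Proposition~\ref{prop:anod-iff-cof-and-weq}, on cofibrant sources the trivial cofibrations coincide with the anodyne maps. Anodyne maps are stable under pushout as the left class of the wfs $(\Anod,\Fib)$, and by the structural observation above the pushout of an anodyne map between cofibrant objects lies again in $\catC$. Hence such a pushout is again anodyne, hence again a trivial cofibration — which is precisely what would fail in a strict model-structure argument for arbitrary sources, and is the reason the corollary is restricted to the cofibrant subcategory.
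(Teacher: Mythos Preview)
Your proof is correct and is exactly the standard verification the paper has in mind: the paper gives no explicit argument (just \thmqed), treating the corollary as an immediate consequence of the left semi-model structure, and you have spelled out that deduction axiom by axiom, with the one nontrivial point---stability of trivial cofibrations under pushout---handled via Proposition~\ref{prop:anod-iff-cof-and-weq} exactly as intended.
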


%%% Local Variables: 
%%% mode: latex
%%% TeX-master: "htott-advances.tex"
%%% End: 

%\appendix

%\section{Logical structure on a contextual category}

%\input{A-logical structure.tex}

%\section{Old discussion of logical structure on $\Eqv$ etc}

%\input{A6-spans-logic.tex}

% TODO: maybe add conclusions and/or applications?

%%
%% Bibliography
%%

\arxivorjournal{
  \bibliographystyle{amsalphaurlmod}
}{
  \bibliographystyle{elsarticle-num}
}

\bibliography{general-bibliography}

\end{document}